\renewcommand{\Box}{\framebox{\rule{0.3em}{0.0em}}}
\def\prob {{\rm Prob}}
\newcommand{\st}{\inmat{s.t.}}
\newtheorem{theorem}{Theorem}[section]
\newtheorem{theorem*}{Theorem}[subsubsection]
\newtheorem{proposition}{Proposition}[section]
\newtheorem{example}{Example}[section]
\newtheorem{definition}{Definition}[section]
\newtheorem{assumption}{Assumption}[section]
\newcommand{\setd}{{ d \kern -.15em l}}
\newcommand{\hatsetd}{ d \hat{\kern -.15em l }}
\newcommand{\dd}{\mathsf {d\kern -0.07em l}}
\newcommand{\bgeqn}{\begin{eqnarray}}
\newcommand{\edeqn}{\end{eqnarray}}
\newcommand{\bgeq}{\begin{eqnarray*}}
\newcommand{\edeq}{\end{eqnarray*}}
\newcommand{\R}{{\rm I\!R}}
\newcommand{\inmat}[1]{\mbox{\rm {#1}}}
\newcommand{\Z}{{\cal Z}}
\newcommand{\be}{\begin{equation}}
\newcommand{\ee}{\end{equation}}
\renewcommand{\Box}{\hfill \rule{2.3mm}{2.3mm}}
\numberwithin{equation}{section}
\renewcommand{\Box}{\framebox{\rule{0.3em}{0.0em}}}
\def\prob {{\rm Prob}}
\newcommand{\vt}{{\vartheta}}
\newcommand{\mb}{\mathbb}
\newcommand{\mr}{\mathscr}
\def\Prob{{\rm{Prob}}}
\title{Randomization of Spectral Risk Measure and Distributional Robustness\thanks{This work is supported by a CUHK direct grant and CUHK start-up grant,  National Natural Science Foundation of China (12171145) and Postgraduate Scientific Research Innovation Project of Hunan Province (CX20210609)}}
\author{
Manlan Li\footnote{School of Mathematics and Computational Science, Xiangtan University, Xiangtan, China; Department of Systems Engineering and Engineering Management, The Chinese University of Hong Kong. E-mail: mlli@se.cuhk.edu.hk, liml@smail.xtu.edu.cn.
The work of this author is carried out when she 
works in the CUHK as a research assistant.
},
Xiaojiao Tong\footnote{School of Mathematics and Statistics, Hunan First Normal University,
          Changsha, China.         E-mail: dysftxj@hnfnu.edu.cn.}
and Huifu Xu\footnote{Department of Systems Engineering and Engineering Management, The Chinese University of Hong Kong. Email: hfxu@se.cuhk.edu.hk.}}
\date{\today}
\begin{document}

\maketitle

\begin{abstract}
In this paper, we consider a situation where a decision maker's (DM's) risk preference can be described by a spectral risk measure (SRM) but 
there is not a single SRM which can be used to represent 
the DM's preferences consistently. 
Consequently we
propose to
randomize the {\color{black}SRM} by introducing a random parameter in the risk spectrum. 
The 
randomized SRM (RSRM) allows one to describe the DM's preferences at different states with different SRMs. When the distribution of the random parameter is known, i.e., the randomness of the DM's preference can be described by a probability distribution, we introduce a new risk measure which is the mean value of the {\color{black}RSRM}.
In the case when the distribution is unknown, we propose a distributionally robust formulation of {\color{black}RSRM}.
The RSRM paradigm provides a new framework for interpreting the well-known Kusuoka's representation of law invariant coherent risk measures  and addressing inconsistency issues arising from 
observation/measurement errors or erroneous responses in preference elicitation process. 
We discuss in detail computational schemes for solving optimization problems based on the 
{\color{black}RSRM} and the distributionally robust {\color{black}RSRM}.

\end{abstract}

\noindent
\textbf{Key words.} Preference inconsistency, randomized SRM, distributionally robust RSRM, Kusuoka's representation

\section{Introduction}
\label{sec:introd}
{\color{black}

The concept of law invariant
coherent risk measure (LICRM) has been widely used in risk management and finance since its introduction
by Artzner et al. \cite{ADEH99}. 
LICRM specifies four important 
 principals of a risk measure which are 
shared by many 
investors in finance but it does not provide 
an advice as to  which
particular LICRM  an individual investor may use.
In a seminal work,
Kusuoka \cite{Kus01} links LICRM to conditional value-at-risk (CVaR)
which is the average
of tail losses 
above a specified confidence level. Specifically he
demonstrates that
any LICRM $\rho$ can be represented as
\begin{equation}
\setlength{\abovedisplayskip}{2pt}
\setlength{\belowdisplayskip}{2pt}
\label{eq:kusuoka's-representation}
\rho(X) =\sup_{\mu\in \mathcal{M}} \int_0^1 \inmat{CVaR}_\alpha(X)\mu(d\alpha),
\end{equation}
where 
$X:\Omega\rightarrow\R$ is a random variable 
representing losses 
and $\mathcal{M}$ denotes a set of probability measures on $[0,1]$.
In practice, we may interpret
(\ref{eq:kusuoka's-representation})
from {\em random risk measure} perspective.
Consider a DM whose risk preference can be described by $\inmat{CVaR}_\alpha$
but there is not a unique $\alpha$
which fits to the DM's risk preference.
For example, in some cases, the DM's preference may be described
by $\inmat{CVaR}_{0.95}$
whereas in other cases
it may be represented by
$\inmat{CVaR}_{0.90}$
or $\inmat{CVaR}_{0.99}$.
This often happens when the environment of the 
decision making problem such as macro-economic circumstance  changes.
In this case, we may
randomize $\alpha$ to capture the DM's varying 
preference.
The probability distribution
of $\alpha$ 
reflects the likelihood that the DM's risk preference can be described by $\inmat{CVaR}_\alpha$,  
e.g., with probability $0.2$, $0.3$ and $0.5$ the DM's risk preferences can be described by 
$\inmat{CVaR}_{0.90}$, $\inmat{CVaR}_{0.95}$  and $\inmat{CVaR}_{0.99}$  respectively. 
From modeller's perspective, 
we may use the expected value of $\inmat{CVaR}_\alpha$ 
to measure the DM's average risk preference, that is, $\int_0^1 \inmat{CVaR}_\alpha(X)\mu(d\alpha)$, where 
$\mu$ is the probability distribution of $\alpha$.
This kind of thinking is in line with random utility theory (\cite{Fis98,Kop01}) where
a DM's preference
has to be represented by a random utility function rather than a deterministic von Neumann-Morgenstern's utility function.
In the absence of complete information on the distribution of
$\alpha$, we may
use partially available information to construct a set of distributions $\mathcal{M}$ and consider the worst
average risk preference, which gives rise to the right hand side (rhs) of (\ref{eq:kusuoka's-representation}).
Note that ${\cal M}$ in (\ref{eq:kusuoka's-representation})
is unknown, we call it Kusuoka's ambiguity set.


Another well-known representation of LICRM
is in terms of weighted value-at-risk (VaR), that is, 
a real-valued function  $\rho$ is a
law invariant coherent risk measure
LICRM
if and only if
\begin{equation}
\setlength{\abovedisplayskip}{2pt}
\setlength{\belowdisplayskip}{2pt}
 \rho(X) :=\sup_{\sigma\in \mathfrak{A}}\int_0^1  F^{\leftarrow}_X(t)\sigma(t)dt,
\label{eq:RSRM}
\end{equation}
where $F^{\leftarrow}_X(t)$ is the quantile function of $X$, 
$\sigma: [0,1)\to \R_+ $ is a non-decreasing and right-hand side continuous function with $\int_{0}^1\sigma(t)dt=1$ 
(known as risk spectrum) 
and
$\mathfrak{A}$ is a set of risk spectra,
see \cite{Sha13,PiS15} and Chapter 6 in \cite{SDR21}.
In this formulation, each $\sigma$ signifies a DM's risk preference,
where
$\int_0^1  F^{\leftarrow}_X(t)\sigma(t)dt$
is known as a spectral risk measure (\cite{Ace02}),
and $\rho(X)$ is  the worst-case spectral risk measure calculated from $\mathfrak{A}$.
Like formulation
(\ref{eq:kusuoka's-representation}),
$\mathfrak{A}$ is unknown but may be
elicited via DM's preferences over pairwise comparison 
lotteries.

The two robust formulations
give rise to the same LICRM,
despite the robust frameworks and the underlying ingredients are different. The former uses
$\inmat{CVaR}_\alpha(X)$ as a building block
with risk preference being
signified by confidence level $\alpha$, whereas the latter uses quantile function $F^{\leftarrow}_X(t)$
(also known as  value at risk (VaR)) as a building block with $\sigma$ representing a DM's risk preference.
Pichler and Shapiro \cite{PiS15} show  that there exists a linear mapping $\mb{T}:\mathscr{M}\rightarrow\mathfrak{S}$ that is defined by $\sigma(\tau)=(\mb{T}\mu)(\tau):=\int_{0}^\tau(1-\alpha)^{-1}\mu(d\alpha)$ such that $\mu=\mb{T}^{-1}\sigma$ given by $\mu(\alpha)=(\mb{T}^{-1}\sigma)(\alpha)=(1-\alpha)\sigma(\alpha)+\int_{0}^\alpha\sigma(\tau)d\tau$ for all $\alpha\in [0,1]$, which means $\mathcal{M}=\{\mu=\mb{T}^{-1}\sigma:\sigma\in\mathfrak{A}\}$.
Using the relationship, we have
$   \rho_\sigma(X):= \int_{0}^1F^\leftarrow_X(t)\sigma(t)dt=\int_{0}^1\inmat{CVaR}_\alpha(X)\mu(d\alpha)$,
and can interpret the rhs of (\ref{eq:RSRM}) as follows: a DM's risk preference can be described by $F^{\leftarrow}_X(t)$ but there is no
unique $t$ such that $F^{\leftarrow}_X(t)$ captures the DM's preferences in all cases. Consequently, we  use a randomized $t$ and its distribution $\mu=\mb{T}^{-1}\sigma$ to calculate the average
 VaR and the worst average VaR value in the absence of complete information of $\mu$.
 Of course, we can also regard $\rho_\sigma(\cdot)$
as a building block, that is, use it to represent the DM's risk preference.
In this case,
(\ref{eq:RSRM}) becomes a preference robust SRM model rather than a distributionally robust preference model, see \cite{wx20}.
 
 \thispagestyle{empty}
\tikzstyle{startstop} = [rectangle, rounded corners, minimum width = 1.5cm, minimum height=1cm,text centered, draw = black]
\tikzstyle{io} = [trapezium, trapezium left angle=70, trapezium right angle=110, minimum width=2cm, minimum height=1cm, text centered, draw=black]
\tikzstyle{process} = [rectangle, minimum width=3cm, minimum height=1cm, text centered, draw=black]
\tikzstyle{decision} = [diamond, aspect = 3, text centered, draw=black]
\tikzstyle{arrow} = [->,>=stealth]
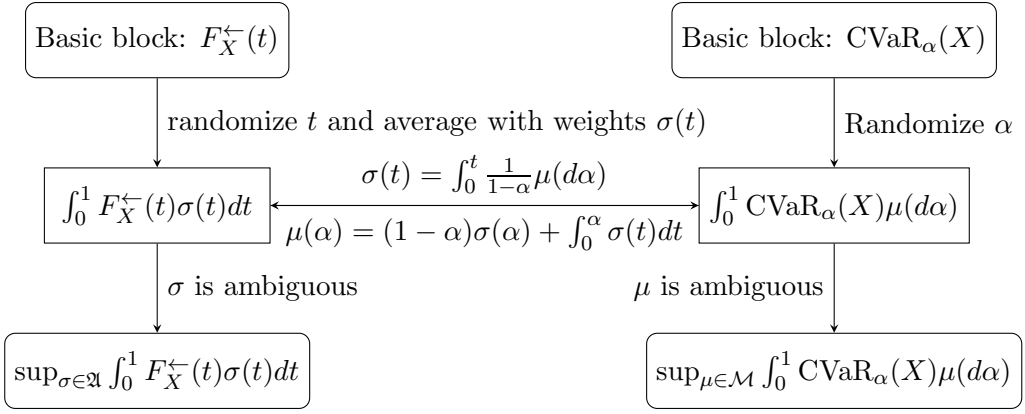
\begin{figure}
\centering
\begin{tikzpicture}[node distance=2cm]
\node[startstop](var)
{ Basic block:  $F_X^\leftarrow(t)$};
\node[startstop, right of = var, xshift = 7cm](cvar)
{ Basic block: $\inmat{CVaR}_\alpha(X)$};
\node[process, below of = var, yshift = -0.2cm](lcrm1)
{$\int_{0}^1F_X^\leftarrow(t)\sigma(t)dt$};
\node[process, below of = cvar, yshift = -0.2cm](lcrm2)
{$\int_{0}^1\inmat{CVaR}_\alpha(X)\mu(d\alpha)$};
\node[startstop, below of = lcrm1, yshift = -0.2cm](rlcrm1)
{$\sup_{\sigma\in\mathfrak{A}}\int_{0}^1F_X^\leftarrow(t)\sigma(t)dt$};
\node[startstop, below of = lcrm2, yshift = -0.2cm](rlcrm2)
{$\sup_{\mu\in\mathcal{M}}\int_{0}^1\inmat{CVaR}_\alpha(X)\mu(d\alpha)$};
\coordinate (point1) at (-3cm, -6cm);
\draw [arrow](var) -- node [right] {randomize $t$ and  average with weights $\sigma(t)$} (lcrm1);
\draw [arrow](cvar) -- node [right] {Randomize $\alpha$} (lcrm2);
\draw [arrow](lcrm1) -- node [right] {$\sigma$ is ambiguous} (rlcrm1);
\draw [arrow](lcrm2) -- node [left] {$\mu$ is ambiguous} (rlcrm2);
\draw [arrow](lcrm1) -- node [above] {$\sigma(t)=\int_{0}^t\frac{1}{1-\alpha}\mu(d\alpha)$ } (lcrm2);
\draw [arrow](lcrm2) -- node [below]
{$\mu(\alpha)=(1-\alpha)\sigma(\alpha)+\int_{0}^\alpha\sigma(t)dt$ } (lcrm1);
\end{tikzpicture}
\caption{Structural interpretations of Kusuoka's representation and 
spectral risk representation (\ref{eq:RSRM})
}
\label{eq:steam-licrm}
\end{figure}
\begin{figure}
\centering
\begin{tikzpicture}[node distance=2cm]
\node[startstop](srm)
{Basic block :  $\inmat{SRM}$ $\rho_\sigma(X)=\int_{0}^1F_X^\leftarrow(t)\sigma(t)dt$};
\node[process, below of = srm, yshift = -0.2cm](sdsrm)
{$\inmat{RSRM}$:  $\rho_{\sigma(\cdot,s)}(X)=\int_{0}^1F_X^\leftarrow(t)\sigma(t,s)dt$};
\node[process, below of = sdsrm, yshift = -0.2cm](asdsrm)
{$\inmat{ARSRM}$: $\rho_{Q}(X)=\mathbb{E}_Q\left[\int_{0}^1F_X^\leftarrow(t)\sigma(t,s)dt\right]$};
\node[startstop, below of = asdsrm, yshift = -0.2cm](rsdsrm)
{$\inmat{DR-ARSRM}$: $\rho_{\mathfrak{Q}}(X)=\sup_{Q\in\mathfrak{Q}}\mathbb{E}_Q\left[\int_{0}^1F_X^\leftarrow(t)\sigma(t,s)dt\right]$};
\coordinate (point1) at (-3cm, -6cm);
\draw [arrow](srm) -- node [left] {randomize $\sigma$} (sdsrm);
\draw [arrow](sdsrm) -- node [left] {$s\sim Q$} (asdsrm);
\draw [arrow](asdsrm) -- node [right] {$Q$ is ambiguous} (rsdsrm);
\end{tikzpicture}
\caption{Flow chart of the models in this paper}
\label{ep:steam-licrm}
\end{figure}
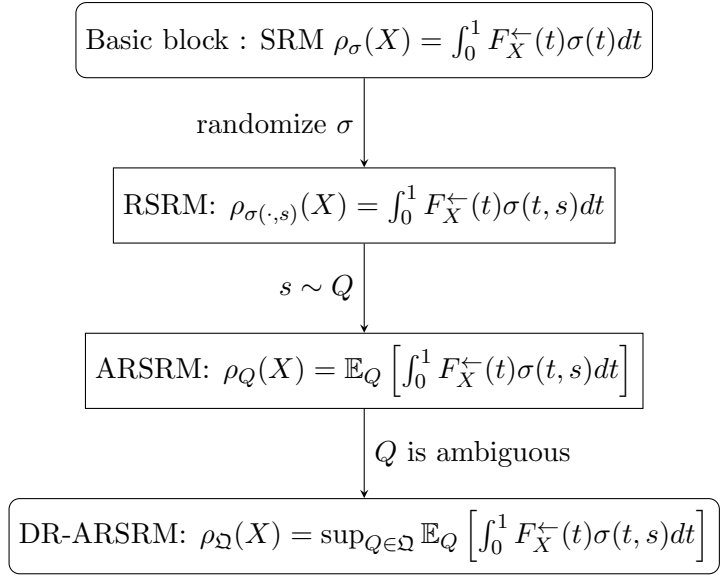
The discussions above show that there are at least three ways
to describe a DM's risk preference:
(a) Use CVaR or VaR but quite often one may find that there does not exist a unique CVaR or VaR 
because the frameworks are too small. 
(b) Randomize them via randomization of the confidence level $\alpha$ and then consider the mean of them under some probability distribution of $\alpha$ {\color{black} or $t$}.
In the CVaR case, 
it gives rise to $\int_{0}^1\inmat{CVaR}_\alpha(X)\mu(d\alpha)$
whereas in the VaR case, it 
leads to  $\int_{0}^1F_X^\leftarrow(t)\sigma(t)dt$.
Since the latter  can be equivalently written as $\int_{\R}x\sigma(F_X(x))dF_X(x))=
\int_\R x dg(F_x(x))$ for some $g$,
it is 
also  known as distortion risk measure (\cite{Den90,Wang95}) based on dual theory of choice (\cite{Yaa87})
since it is essentially about distortion of $F_X$.
(c) If the distribution of 
$\alpha$ {\color{black} or $t$} is ambiguous, then we may use a robust formulation for both.
 So which model should one select? The answer 
  depends on which model can be more conveniently 
used to 
  describe the DM's preference.
From modelling perspective, it might be more preferable
to use $\rho_\sigma(\cdot)$ and its robust formulation as this will bypass the inconsistency issue and the subsequent randomization step in modelling.
Moreover, since $\rho_\sigma(\cdot)$ is a coherent RM, it might cover a larger class of risk preference representation problems.

In the literature of risk management, there are
several versions of randomization of  risk measure.
For instance,
Zhu and Fukushima \cite{ZhF09} consider 
randomness of CVaR 
arising from uncertainty of the probability 
distribution of random loss $X$, denoted by $F$,
and propose a distributionally robust CVaR model
$\sup_{F\in {\cal F}}\inmat{CVaR}_\alpha^F(X)$
where ${\cal F}$ is an uncertainty set of $F$.
Qian, Wang and Wen \cite{QWW19} introduce a general framework of composite risk measure over the space
of randomized LICRMs. In both frameworks, 
the randomness arises from uncertainty/ambiguity of 
the probability distribution of random loss rather than the DM's risk preferences. 
{\color{black}Dentcheva and  Ruszczyński \cite{DR20} establish a new framework that addresses endogenous uncertainty in the context of risk-averse two-stage optimization models with partial information and decision-dependent observation distribution.}
Delage, Kuhn and Wiesemann \cite{DKW19} consider another type of random risk measure where the randomness comes from
randomization of the decision variables. That is, instead of considering random loss $X$, they consider $X_s$ where $s$ is a random parameter 
associated with randomization of decision making (such as flipping a coin).
Consequently 
the authors introduce a new risk measure which captures the uncertainty of both $X$ and $s$. Again, this kind of risk measure differs from ours because their randomization stems 
from random strategy rather than the DM's risk preference.


In this paper, we assume that a DM's risk preference can be represented by SRM $\rho_\sigma(\cdot)$ but there
exist
potential inconsistencies, which means that
there is no
unique $\sigma$ such that the DM's preference
can be represented by $\rho_\sigma(\cdot)$. 
In this case, we may use a parametric SRM
$\rho_{\sigma(\cdot,s)}(\cdot)$ to describe
the DM's varying risk preferences where
the risk spectrum $\sigma(\cdot,s)$ depends on 
parameter $s$. We call $s$ a state variable
representing the DM's ``state of risk preference''.
This should be differentiated from state variable 
of an action in Anscombe and Aumann's model 
(\cite{AnA63}). 
Since it is often uncertain
when the DM uses a particular parametric SRM, 
we may make $s$ as a random variable,
that is,
the DM's preference displays some kind of randomness,
and  
subsequently
consider the following randomized SRM:
\begin{equation*}
\setlength{\abovedisplayskip}{2pt}
\setlength{\belowdisplayskip}{2pt}
\label{eq:rdmSRM}
    \inmat{(RSRM)} \quad \rho_{\sigma(\cdot,s)}(X):= \int_{0}^1F^\leftarrow_X(t)\sigma(t,s)dt.
\end{equation*}
Let $Q$ denote the probability distribution of $s$. Then the average of $\rho_{\sigma(\cdot,s)}(X)$ can be calculated by
\begin{equation*}
\setlength{\abovedisplayskip}{2pt}
\setlength{\belowdisplayskip}{2pt}
   \inmat{(ARSRM)} \quad
   \rho_{Q}(X):= \mathbb{E}_Q\left[\int_{0}^1F^\leftarrow_X(t)\sigma(t,s)dt\right].
   \label{eq:Exp-RSRM} 
\end{equation*}
In the absence of complete information of $Q$, we may consider
a robust formulation
\begin{equation}
\setlength{\abovedisplayskip}{2pt}
\setlength{\belowdisplayskip}{2pt}
  {\rm (DR-ARSRM)} \quad
   \rho_{{\mathfrak{Q}}}(X):= \sup_{Q\in {\mathfrak{Q}}}
  \mathbb{E}_Q\left[\rho_{\sigma(\cdot,s)}(X)\right]
  = \sup_{Q\in {\mathfrak{Q}}}
  \mathbb{E}_Q\left[
  \int_{0}^1F^\leftarrow_X(t)\sigma(t,s)dt\right].
   \label{eq:Exp-RSRM-R}
\end{equation}
From the definition, we can see immediately that if $\sigma(\cdot,s)$ is a non-decreasing function for every $s$, then both $ \rho_{\sigma(\cdot,s)}$ and
$\rho_{{\mathfrak{Q}}}$ are coherent risk measures. The key issue is how to use
incomplete information about a DM to construct the ambiguity set ${\mathfrak{Q}}$ and how to solve problem (\ref{eq:Exp-RSRM-R}) efficiently.
A popular way in the literature of behavioural economics and preference robust optimization is
to use pairwise comparison questionnaires (see for example \cite{ArD15})  to elicit the DM's risk preference and 
subsequently construct ${\mathfrak{Q}}$. Another way is to assume that there is a nominal probability distribution $\hat{Q}$ based on empirical data or subjective judgement
and then construct ${\mathfrak{Q}}$ by a ball of
distributions centered at $\hat{Q}$. In this paper, we will discuss how the two approaches may be properly used.

The distributionally robust model (\ref{eq:Exp-RSRM-R}) may be applied to optimal decision making problems where $X$ is
interpreted as the loss of the problem such as
portfolio of financial investments. In this case, we replace $X$ by $f(z,\xi)$ where $z$ is the decision vector and $\xi$ is a vector of exogenous uncertainties. 
We consider the following minimax problem
\begin{equation}
\setlength{\abovedisplayskip}{2pt}
\setlength{\belowdisplayskip}{2pt}
   \min_{z\in {\cal Z}}\sup_{Q\in {\mathfrak{Q}}}
  \mathbb{E}_Q\left[\rho_{\sigma(\cdot,s)}(f(z,\xi))\right]
  = \min_{z\in {\cal Z}}
  \sup_{Q\in {\mathfrak{Q}}}
  \mathbb{E}_Q\left[
  \int_{0}^1F^\leftarrow_{f(z,\xi)}(t)\sigma(t,s)dt\right].
   \label{eq:Exp-RSRM-R-opti}
\end{equation}
Here we give a simple example to illustrate the model.
\begin{example}
An investor is considering an investment where the profit/loss
is closely related to 
future
uncertainties.
Let $X$ denote the overall loss. An analysis shows that $X$ is mainly
affected by
micro-economic circumstance such as
market demand and competitor's position.
However, the investor's risk attitude
is sensitive to the macro-economic situation,
that is,  he is more risk taking
when the macro-economic situation is good and risk averse otherwise.
Let $s$ denote
%
the state of the macro-economic situation
with two scenarios
$s_1$ and $s_2$.
Assume also that the investor's risk attitude can be represented by the SRM. Then we can  write down the average of
the investor's SRM of $X$ as
$    \rho_Q(X) = q_1\rho_{\sigma(\cdot,s_1)}(X)+q_2\rho_{\sigma(\cdot,s_2)}(X)
$, where $Q$ denotes the probability distribution of $s$, i.e., $Q(s=s_1)=q_1, Q(s=s_2)=q_2$.
This distribution reflects the investor's belief about uncertainty of macro-economic situation in future and is therefore subjective.
In the absence of complete information of $Q$, we may consider
a robust model
\begin{equation*}
\setlength{\abovedisplayskip}{2pt}
\setlength{\belowdisplayskip}{2pt}
\rho_\mathfrak{Q}(X) = \sup_{\boldsymbol{q}\in\mathfrak{Q}}\left\{q_1\rho_{\sigma(\cdot,s_1)}(X)+q_2\rho_{\sigma(\cdot,s_2)}(X)\right\},
\end{equation*}
which calculates the worst average from an ambiguity set of plausible distributions ${\mathfrak Q}$.

\end{example}

}

Throughout the paper, we use the following notation. 
By convention, we use $\R^n$ to represent  $n$ dimensional Euclidean space equipped with the Euclidean norm $\|\cdot\|$,
$d(z_1,z_2):=\|z_1-z_2\|$ 
to denote 
the distance from a point $z_1$ to another point $z_2$
and $\R^n_+$ the first orthant of $\R^n$ with non-negative components.
{\color{black} For a given positive integer $N$}, we write $[N]$ and $[N_0]$ 
for $\{1,\cdots,N\}$ and
$\{0,1,\cdots,N\}$ respectively.
For a given real number $a$, 
we denote the indicator function of an interval 
$[a,1]$
by $\mathbbm{1}_{[a,1]}(t)$, 
which takes a value of $1$ for 
$t\in[a,1]$ and $=0$ otherwise. 
For $p\in [1,\infty)$, we let $\mathscr{L}^p[0,1]$ denote the 
$\mathscr{L}^p$-space of measurable functions $h:[0,1]\rightarrow\R$ with finite $p$th order moments, i.e.,
$\mathscr{L}^p[0,1]:=\{h:[0,1]\rightarrow\R\left|\int_{0}^1|h(t)|^p dt<\infty\right.\}$. Moreover, we let 
$\mathscr{P}(S)$ and $\mathscr{P}(S^N)$ denote the set of probability distributions with support set $S$ and  
$S^N:=\{s^1,\cdots,s^N\}$ respectively.


\section{Computation of the random spectral risk measures}


We begin with a basic assumption for the setup of the $\inmat{RSRM}$.

\begin{assumption}
The DM's risk preference
can be represented by an SRM but there does not exist
a deterministic risk spectrum $\sigma$ such that
the DM's risk preference can be described by $\rho_\sigma$.
\label{Assu:basic-rsmr}
\end{assumption}

\subsection{Random parametric SRM}



Under Assumption \ref{Assu:basic-rsmr},
we propose to use a $\inmat{RSRM}$ $\rho_{\sigma(\cdot,s)}$ to describe the DM's risk preferences. A key question is how to identify
an appropriate $\inmat{RSRM}$ which fits into the DM's risk preferences. We begin by considering some specific
parametric SRMs where the DM's risk preferences may be described by any one of them with the underlying  parameters being  randomized properly.


\begin{example}\label{exm:rsrm}
Let $X$ be a random variable representing financial losses.

\begin{itemize}
\item [\inmat{($i$)}] $\inmat{(Randomized CVaR)}$
Let $\alpha$ be a random variable taking values over
$(0,1)$ and
\begin{equation}
\label{eq:step-RS-CVaR}
  \sigma(\tau,\alpha) := \frac{1}{1-\alpha}\mathbbm{1}_{[\alpha,1]}(\tau),  
\end{equation}
where $\mathbbm{1}_{[a,b]}(\cdot)$ denotes the indicator function over interval $[a,b]$. Then
$$
\rho_{\sigma(\cdot,\alpha)}(X) = \inmat{CVaR}_\alpha(X).
$$
In this case, the $\inmat{DR-ARSRM}$ of $X$
can be written as
\bgeqn
   \rho_{{\mathfrak{Q}}}(X):= \sup_{Q\in {\mathfrak{Q}}}
  \mathbb{E}_Q\left[\rho_{\sigma(\cdot,\alpha)}(X)\right]
  = \sup_{Q\in {\mathfrak{Q}}}
  \mathbb{E}_Q\left[
 \inmat{CVaR}_\alpha(X)
 \right],
   \label{eq:Exp-RSRM-R-CVaR}
\edeqn
where $\mathfrak{Q}$ is an ambiguity set of 
probability
distributions of $\alpha$. The rhs of 
(\ref{eq:Exp-RSRM-R-CVaR}) coincides with Kusuoka's representation (\ref{eq:kusuoka's-representation})
when $\mathfrak{Q}=\mathcal{M}$.
We call $\mathcal{M}$ {\em Kusuoka's ambiguity set}.
In Section \ref{construction-Kasuoka's-set}, we will come back to discuss how the ambiguity set may be constructed
in a preference elicitation process. 

\item [\inmat{($ii$)}]
$\inmat{(Randomized Wang's proportional hazards transform or power distortion)}$
Let $r$ be a random variable taking values over
$(0,1]$
and
\begin{equation}
    \label{eq:wang's}
\sigma(\tau,r) = r(1-\tau)^{r-1}.
\end{equation}
Then
$
 \rho_{\sigma(\cdot,r)}(X) = \int_{0}^{\infty}(1-F_{X}(t))^rdt.
$
In this case, 
the $\inmat{DR-ARSRM}$ of $X$
can be written as
\begin{equation*}
    \rho_{{\mathfrak{Q}}}(X):= \sup_{Q\in {\mathfrak{Q}}}
  \mathbb{E}_Q\left[\rho_{\sigma(\cdot,r)}(X)\right]
  = \sup_{Q\in {\mathfrak{Q}}}
  \mathbb{E}_Q\left[
 \int_{0}^{\infty}(1-F_{X}(t))^rdt
 \right],
   \label{eq:Exp-RSRM-R-Wang}
\end{equation*}
where $\mathfrak{Q}$ is an ambiguity set of the probability
distributions of $r$.

\item [\inmat{($iii$)}] $\inmat{(Gini's measure)}$
Let $s$ be a random variable taking values over $(0,1)$ 
and
\begin{equation}
\label{eq:Gini}
    \sigma(\tau,s) = (1-s) +2s\tau.
\end{equation}
Then
$$
  \rho_{\sigma(\cdot,s)}(X)  =\mathbb{E}[X]+s\mathbb{E}(|X-X'|)=: \inmat{Gini}_s(X).
$$
In this case, the $\inmat{DR-ARSRM}$ of $X$
can be written as
$$
   \rho_{{\mathfrak{Q}}}(X):= \sup_{Q\in {\mathfrak{Q}}}
  \mathbb{E}_Q\left[\rho_{\sigma(\cdot,s)}(X)\right]
  = \sup_{Q\in {\mathfrak{Q}}}
  \mathbb{E}_Q\left[
 \inmat{Gini}_s(X)
 \right],
   \label{eq:Exp-RSRM-R-Gini}
$$
where $\mathfrak{Q}$ is an ambiguity set of the probability
distributions of $s$.

\item [\inmat{($iv$)}]
$\inmat{(Convex combination of expected value and CVaR risk measure)}$
Let $\alpha$ and $\lambda$ be two independent
random variables taking values over $(0,1)$ and
\begin{equation}
    \label{eq:SRM-convex-combination}
\sigma(\tau,\lambda,\alpha)=\lambda \mathbbm{1}_{[0,1]}(\tau)+(1-\lambda)\frac{1}{1-\alpha}\mathbbm{1}_{[\alpha,1]}(\tau),
\end{equation}
where $\mathbbm{1}_{[a,b]}$ denote the indicator function over interval $[a,b]$.
Then
$$
  \rho_{\sigma(\cdot,\lambda,\alpha)}(X) =\lambda \mathbb{E}[X]+(1-\lambda)\inmat{CVaR}_{\alpha}(X)
$$
defines a randomized convex combination of
the expected value and randomized CVaR risk measure.
In this case, the $\inmat{DR-ARSRM}$ of $X$
can be written as
\begin{equation*}
    \rho_{{\mathfrak{Q}}}(X):= \sup_{Q\in {\mathfrak{Q}}}
  \mathbb{E}_Q\left[\rho_{\sigma(\cdot,s)}(X)\right]
  = \sup_{Q\in {\mathfrak{Q}}}
  \mathbb{E}_Q\left[
\lambda \mathbb{E}[X]+(1-\lambda)\inmat{CVaR}_{\alpha}(X)
 \right],
   \label{eq:Exp-RSRM-R-CECVaR}
\end{equation*}
where $\mathfrak{Q}$ is an ambiguity set of the joint probability
distributions of $\lambda$ and $\alpha$.
\end{itemize}
\end{example}

\subsection{Computation of ARSRM}
\label{sec:calmeanofsrm}


Let X be 
finitely distributed with $\mathbb{P}(X=x_k)=p_k$ 
where $-\infty=x_0<x_1<x_2<\cdots<x_K<x_{K+1}=+\infty$. Let  $\pi_k:=\sum_{i=1}^{k-1}p_i$ for $k\in[K+1]$, where $\pi_1 =0$ and $\pi_{K+1}=1$. The 
cumulative distribution function (cdf)
of $X$ is a non-decreasing and right-hand side continuous step-like function with $K$ breakpoints, that is
\begin{equation*}
    F_X(x) = \pi_k, \ \text{for} \ x\in[x_{k-1},x_{k}), k\in[K+1].
\end{equation*}
The 
quantile function of $X$ is a non-decreasing and left-hand side continuous step-like function with $K$ breakpionts, i.e.,
\begin{equation*}
    F^\leftarrow_X(t) = x_k, \ \text{for} \ t\in (\pi_k,\pi_{k+1}], k \in[K].
\end{equation*}
Let $\mathfrak{S}(S) :=\{\sigma(\cdot,s)\in\mathfrak{S}:  s\in S\}$ 
be a set of 
random risk spectra.
For fixed $s\in S$, 
the discrete distribution of $X$
allows us to 
simplify the representation of the 
RSRM of $X$:
\begin{equation*}
    \rho_{\sigma(\cdot,s)}(X) = \int_{0}^1F^\leftarrow_X(t)\sigma(t,s)dt
     = \sum_{k=1}^K x_k\int_{\pi_k}^{\pi_{k+1}}\sigma(t,s)dt
     = \sum_{k=1}^K x_k\psi_k(s),
      \label{eq:rnd-srm-dis}
\end{equation*}
where $\psi_{k}(s)= \int_{\pi_k}^{\pi_{k+1}}\sigma(t,s)dt$ and $\sum_{k=1}^K\psi_{k}(s)=1$. 
Since $\pi_k$ is the cdf of $X$,
this formulation 
relies heavily on the order of
the outcomes of $X$.
Let $Q$ denote the
distribution of $s$. Then 
the ARSRM of $X$ (see (\ref{eq:Exp-RSRM})) can be formulated as
\begin{equation}
    \rho_Q(X)=\int_{S}\sum_{k=1}^Kx_k\psi_{k}(s)Q(ds)
    = \sum_{k=1}^K
    x_k\int_S\psi_{k}(s)Q(ds) =
    \sum_{k=1}^K x_k
    \int_{\pi_k}^{\pi_{k+1}}
    \int_S\sigma(t,s)Q(ds)dt.
\end{equation}
Moreover, by setting
$a(t):=\int_S\sigma(t,s)Q(ds)$, we can write $\rho_Q(X)$
succinctly as
\begin{equation*}
    \rho_Q(X)=
        \sum_{k=1}^K x_k
    \int_{\pi_k}^{\pi_{k+1}}a(t)dt=  \sum_{k=1}^K x_k a_k,
    \label{eq:avg-srm-dis}
\end{equation*}
where $a_k:=\int_{\pi_k}^{\pi_{k+1}}a(t)dt$. 
Since
$a_1\leq a_2\leq \cdots\leq a_{K}$ and $\sum_{k=1}^{K} a_k=1$, this manifests that
$\rho_Q(\cdot)$ is a coherent risk measure (see \cite[Theorem 4.1]{Ace02}). It also shows that $\rho_Q(\cdot)$  
is a specific spectral risk measure. The average effect of $\inmat{RSRM}$ is equivalent to some deterministic SRM although we do not know the latter.



\subsection{Random step-like risk spectrum}
\label{sec:randomSRS}


In practice, it is often desirable
to consider step-like risk spectrum.
Indeed, from (\ref{eq:rnd-srm-dis}), we can see that $ \rho_{\sigma(\cdot,s)}(X)$ is uniquely determined by $\psi_k(s)$, $k\in[K]$ and this is equivalent to setting
$$
\sigma(t,s) = \psi_k(s)/(\pi_{k+1}-\pi_k), \; \inmat{for} \; t\in [\pi_k,\pi_{k+1}).
$$
Several papers \cite{PiS15,wx20,gx21a} have shown that when the number of breakpoints  is sufficiently large, the step-like function can efficiently approximate the real risk spectrum function. In addition, when the risk spectrum is a step-like function, RSRM and {\color{black}the corresponding optimization problem} can be easily solved. 
Let $\mathfrak{S}_M$ denote the set of all nonnegative, non-decreasing and normalized step-like functions over $[0,1]$ with breakpoints
$$0=t_0<t_1<\cdots,t_M<t_{M+1}=1.$$
{\color{black}Let $\sigma_{M}(t,s):=\sum_{i=0}^M\sigma_{i}(s)\mathbbm{1}_{[t_i,t_{i+1})}(t)$ be a random step-like
risk spectrum.}
It obvious that $\sigma_{M}(t,s)\in\mathfrak{S}_M(S)$, where $\mathfrak{S}_M(S):=\{\sigma_{M}(t,s)\in\mathfrak{S}_M:s\in S\}$.
Moreover, let 
$$\Phi=\left[\left(\int_{\pi_k}^{\pi_{k+1}}\mathbbm{1}_{[t_i,t_{i+1})}(t)dt\right)_{k,i}\right]_{[K]\times[M_0]}\in\R^{K\times(M+1)}, \ X_K=(x_1,\cdots,x_K)^\top\in\R^{K}$$
and 
$\boldsymbol{\sigma(s)}=(\sigma_0(s),\sigma_1(s),\cdots,\sigma_M(s))^\top\in\R^{M+1}.$
Then the random step-like spectral risk measure can be written as
\begin{equation*}\label{def:sSRM}
   \rho_{\sigma_{M}(\cdot,s)}(X) = \sum_{k=1}^K x_k\sum_{i=0}^M\left(\int_{\pi_k}^{\pi_{k+1}}\sigma_{i}(s)\mathbbm{1}_{[t_i,t_{i+1})}(t)dt\right)=X_K^\top\Phi\boldsymbol{\sigma(s)}.
\end{equation*} 
Note that the {\color{black}random} step-like risk spectrum is non-negative, non-decreasing function with the normalized property $\int_0^1\sigma_M(t,s)dt=1$ for all $s\in S$. 
Let
\begin{equation*}
    A :=\left(\begin{matrix}
    -1 & 1 &  0 & \cdots & 0 & 0\\
    0 &  -1 & 1 & \cdots & 0 & 0\\
\vdots&\vdots&\vdots&\cdots&\vdots&\vdots\\
    0 &  0 &  0 & \cdots & -1 & 1
    \end{matrix}\right)\in\R^{M\times (M+1)}
\quad \inmat{and} \quad    
 \tilde{t} = \left(\begin{matrix}
t_1-t_0\\
t_2-t_1\\
\vdots\\
t_{M+1}-t_M
\end{matrix}\right)\in\R^{M+1}.
\end{equation*}
Then the vector of parameters $\boldsymbol{\sigma(s)}$, which effectively randomizes the step-like risk spectrum, is supported by
\begin{equation*}
\label{def:step-likeRS-set}
    \mathfrak{S}_M(S):=\{\boldsymbol{\sigma(s)}\in\R^{M+1}|A\boldsymbol{\sigma(s)}\geq0,\tilde{t}^\top \boldsymbol{\sigma(s)}=1,\boldsymbol{\sigma(s)}\geq0,s\in S\}.
\end{equation*}
Let $Q$ denote the probability of $s$. Then the 
ARSRM with 
random step-like risk spectrum 
can be expressed by
 \bgeq 
 \rho_{Q}(X) &=& \int_S\rho_{\sigma_M(\cdot,s)}(X)Q(ds) =\int_S \left[\sum_{k=1}^K x_k\left(\sum_{i=0}^M\int_{\pi_k}^{\pi_{k+1}}
 \sigma_{i}(s)\mathbbm{1}_{[t_i,t_{i+1})}(t)dt \right)\right]Q(ds) \\
 &=& \sum_{k=1}^K x_k\sum_{i=0}^M\left(\int_S\sigma_{i}(s)Q(ds)\right)\int_{\pi_k}^{\pi_{k+1}}\mathbbm{1}_{[t_i,t_{i+1})}(t)dt\\
 &=&X_K^\top \Phi\mathbb{E}_Q[\boldsymbol{\sigma(s)}].
\edeq

\subsection{Practical construction
of
step-like random risk spectrum
}

In practice, we may only know the values of 
RSRM but not the random risk spectra.
For instance, we know different premiums that an insurance company charges
on different groups of insureds, but don't know the true random risk spectra which represents the insurance company's risk preferences. 
In this subsection, we consider the case that 
we are able to observe  $L$ values of the RSRM w.r.t.~random loss $X$, i.e., $\rho^1_{\sigma(\cdot,s)}(X),\cdots,
\rho^L_{\sigma(\cdot,s)}(X)$, for each $s\in S$
which are calculated with random risk spectrum $\sigma(\cdot,s)$ (for example, the $L$ values of the RSRM correspond to premium prices of $L$ contracts). 
Assume further that $S=\{s^1,\cdots,s^N\}$.
For each $\rho^l_{\sigma(\cdot,s)}(X)$, $\forall l\in[L]$, $s\in S$, we have an ordered sample $\{x_{1l},\cdots,x_{Kl}\}$ of size $K$ with $\mathbb{P}(X=x_{kl})=\frac{1}{K}$, $\forall k\in[K]$, that is,
$$
\rho^l_{\sigma(\cdot,s)}(X) = \sum_{k=1}^K x_{kl}\int_{\frac{k-1}{K}}^{\frac{k}{K}}\sigma(t,s)dt.
$$
Since $\sigma(t,s)$ is unknown, we propose to construct a step-like function to approximate it. Let $\sigma_K(t,s)$ be a step-like function with breakpoints $0,\frac{1}{K},\cdots,\frac{K-1}{K},1$, i.e.,
\begin{equation*}
    \sigma_K(t,s^i) = \varpi_{k,i}, \ \ \text{for} \ t\in\left[\frac{k-1}{K},\frac{k}{K}\right), k\in[K], i\in[N],
\end{equation*}
where $\{\varpi_{k,i}\}_{k=1}^K$ is a non-negative and non-decreasing sequence with the normalized property $\frac{1}{K}\sum_{k=1}^K\varpi_{k,i} = 1$ for all $i\in[N]$. The approximate RSRM with $\sigma_K(t,s^i)$ and $ \mathbb{P}$ can be calculated by
\begin{equation*}
    \rho^l_{\sigma_K(\cdot,s^i)}(X)
    =\frac{1}{K}\sum_{k=1}^Kx_{kl}\varpi_{k,i}.
\end{equation*}
Note that $\rho^l_{\sigma(\cdot,s^i)}(X)$ is the observed value of the RSRM based on the $l$th ordered sample $\{x_{1l},\cdots,x_{Kl}\}$ and $\rho^l_{\sigma_K(\cdot,s^i)}(X)$ is the approximate value of the RSRM based on the step-like approximation of $\sigma(t,s^i)$. We want to identify the values of parameters $\varpi_{k,i}$, $k\in[K],i\in[N]$ such that the two values are as close as possible and propose to do so by solving a regression problem
\begin{equation}\label{md:qre}
    \begin{split}
        \min_{\varpi_{k,i}}\quad&
        \sum_{i=1}^N\sum_{l=1}^L\left(\rho^l_{\sigma(\cdot,s^i)}(X)-\rho^l_{\sigma_K(\cdot,s^i)}(X)\right)^2\\
        \inmat{s.t.} \quad&\frac{1}{K}\sum_{k=1}^K
        \varpi_{k,i}=1, \ \forall i\in [N], \\
        & \varpi_{k,i}\leq\varpi_{k+1,i}, \ \forall \ k\in[K-1], i\in [N],\\
        &\varpi_{k,i} \geq 0, \ \forall k\in[K], i\in [N],
    \end{split} 
\end{equation}
    see a similar approach by Escobar and Pflug \cite{ep20}.

To visualize the results of the above discussions, we 
give 
graphical interpretations 
about this approach. Given 
three true risk spectra $\sigma(t,s_1)=\frac{1}{3}(1-t)^{-\frac{2}{3}}$, $\sigma(t,s_2)=t+\frac{1}{2}$ and $\sigma(t,s_3)=\frac{1}{2}(1-t)^{-\frac{1}{2}}$. We take the sample of $X$ randomly from uniform distribution on $(0,1)$. Next, we  compute $\varpi_{k,i}, \forall k\in[K], i\in[N]$ by solving problem (\ref{md:qre}), and subsequently  obtain 
step-like approximation of
the true risk spectrum. The detailed 
computational 
results are shown in Figure \ref{fig:breakponits}-\ref{fig:sample}.
\begin{figure}[H]
    \centering
    \includegraphics[scale=0.60]{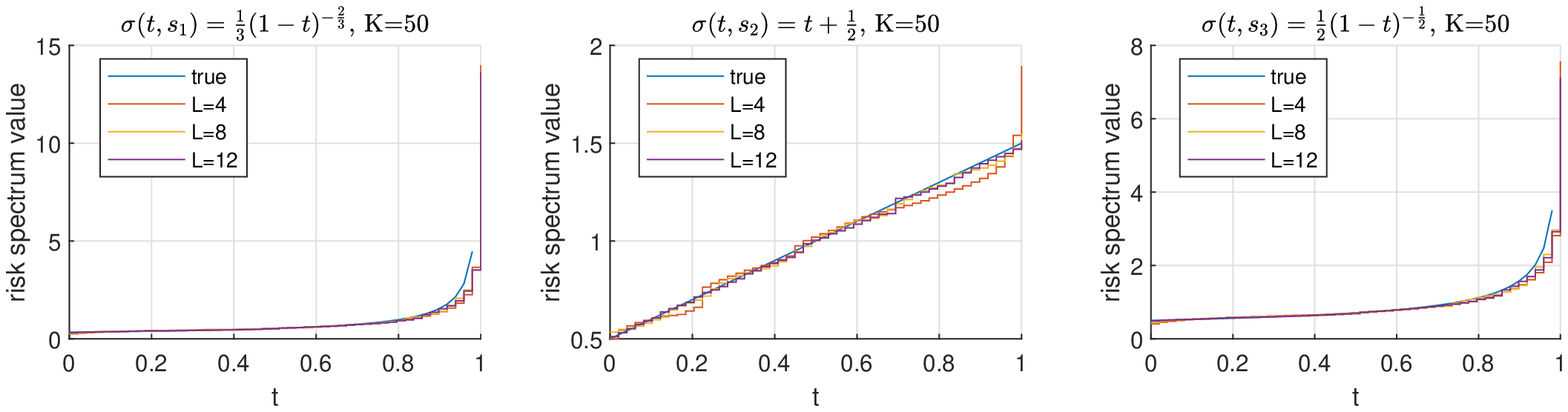}
    \caption{Approximate random risk spectrum of different number of observed values of the SRM}
    \label{fig:breakponits}
\end{figure}
\begin{figure}[H]
    \centering
    \includegraphics[scale=0.60]{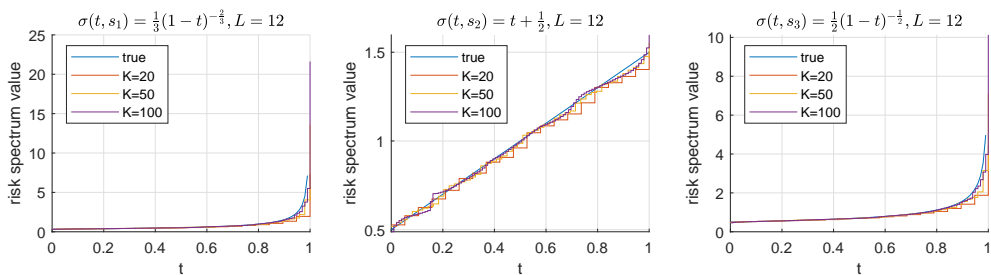}
    \caption{Approximate random risk spectrum of different number of breakpoints }
    \label{fig:sample}
\end{figure}

\section{Construction of the ambiguity set}
The ambiguity set $\mathfrak{Q}$ in (\ref{eq:Exp-RSRM-R-opti}) is a key element.
In this section, we give a sketch 
as to how it may be constructed using 
the well-known methods in the literature of distributional robust optimization (DRO).
For the Kusuoka' ambiguity set, we will propose a different approach.
\vspace{-0.2cm}
\subsection{Kantorovich ball approach}
 In the 
main stream research
of DRO models, the ambiguity is concerned with the probability distribution of exogenous/endogenous
uncertainty, see \cite{dy10,gs10,wks14,ek18,
ys20,Lam19} and the
references therein. Here, the ambiguity stems from incomplete information of probability distribution of endogenous uncertainty
(DM's risk preference). We consider a situation where
it is possible to use empirical data or subjective judgement to identify a nominal distribution $\hat{Q}$ but there is inadequate information to confirm whether $\hat{Q}$ is the true. Consequently, we construct 
an ambiguity set $\mathfrak{Q}$ by considering all probability distributions of $Q$ near $\hat{Q}$ 
using the well-known   Kantorovich/Wasserstein ball approach (\cite{ek18,PflugPichler2014,Gao22}). 
Let  $\{s^1,\cdots,s^N\}$ be 
independent
and identically distributed (i.i.d) copies of $s$, denoted by $S^N$, and $Q_N$ is a discrete empirical distribution.
Define the Kantorovich ball centered at $Q_N$:   
\begin{equation}
\setlength{\abovedisplayskip}{2pt}
\setlength{\belowdisplayskip}{2pt}
\label{def:Kantorovicball}
    {\mathfrak{Q}}_K(Q_N,r)=\left\{Q\in\mr{P}(S)|\mathsf{dl}_K(Q,Q_N)\leq r\right\},
\end{equation}
where $r$ is a positive number,
$\mathsf{dl}_K(\cdot,\cdot)$ is the Kantorovich metric defined by:
\begin{equation*}
\setlength{\abovedisplayskip}{2pt}
\setlength{\belowdisplayskip}{2pt}   \mathsf{dl}_K(Q_1,Q_2):=\inf_{\Pi\in\mathscr{P}(S\times S)}\int_{S\times S}\|s_1-s_2\|\Pi(ds_1\times ds_2),
\end{equation*}
where $\Pi$ is a joint distribution of $s_1$ and $s_2$ with marginals $Q_1$ and $Q_2$ respectively, $\|\cdot\|$ denotes an arbitrary norm defined on $S$. Note that Kantorovich metric can be reformulated as
\begin{equation*}
\setlength{\abovedisplayskip}{2pt}
\setlength{\belowdisplayskip}{2pt}    \mathsf{dl}_K(Q_1,Q_2)=\sup_{g\in\mathscr{G}}\left\{\int_{S}g(s)Q_1(ds)-\int_{S}g(s)Q_2(ds)\right\},
\end{equation*}
where $\mathscr{G}$ is the set of all Lipschitz functions with $|g(s_1)-g(s_2)| \leq \|s_1 -s_2\|$ for all $s_1,s_2\in\R$.
If the true probability distribution $Q^*$ of $s$ is light-tailed, that is, there exists a positive number $q>0$ such that
    $\delta:=\int_{\mathcal{S}}\exp(\|s\|^q)Q^*(ds)<\infty$,
then it follows by \cite[Theorem 2]{fg15} that for any $r>0$ and $N>1$, there exist positive constants $C_1$ and $C_2$ dependent only on $q$ and $\delta$  such that
\begin{equation}\label{pro:wass}
\setlength{\abovedisplayskip}{2pt}
\setlength{\belowdisplayskip}{2pt}  \prob(\mathsf{dl}_K(Q^*,Q_N)\geq r)\leq\left\{\begin{matrix}
        C_1 \exp(-C_2Nr^2), & \text{for} \ r\leq1,\\
        \exp(-C_2Nr^q), & \text{for} \ r>1,
    \end{matrix}\right.
\end{equation}
where ``Prob'' is the probability measure over the space $\R_+^N$ with Borel-sigma algebra $\mathcal{B}\otimes\cdots\otimes\mathcal{B}$ (N times).
We can estimate the radius of the smallest Kantorovich ball that contain $Q^*$ with confidence $1-\theta$ for certain $\theta\in(0,1)$. Let the rhs of (\ref{pro:wass}) equal to $\theta$. We can then figure out the radius of the
ball $r$ by
\begin{equation}\label{eq:rN}
    r_N(\theta)=\left\{
    \begin{matrix}        \left(\frac{\log(C_1\theta^{-1})}{C_2N}\right)^{1/2}, & \text{for} \ N\geq\frac{\log(C_1\theta^{-1})}{C_2},\\
        \left(\frac{\log(C_1\theta^{-1})}{C_2N}\right)^q, & \text{for} \ N<\frac{\log(C_1\theta^{-1})}{C_2},
    \end{matrix}\right.
\end{equation}
which means that the Kantorovich ball (\ref{def:Kantorovicball}) contains the true probability distribution $Q^*$ with
probability at least $1-\theta$ when  $r=r_N(\theta)$. Note that as the sample size N goes to infinity, the radius $r_N(\theta)$ converges to zero.
This means the Kantorovich ball converges to a singleton $Q^*$ as $N$ goes to infinity.

The Kantorovich ball defined in
(\ref{def:Kantorovicball}) contains 
both discrete and continuous distributions.
Differing from \cite[Theorem 4.1]{ek18},
the objective function 
in problem 
(\ref{eq:Exp-RSRM-R-opti})
is not convex in $s$ and hence, it is difficult to 
construct a tractable reformulation for 
the problem.
This prompts us to 
derive a discrete 
approximation of the Kantorovich ball. 

Let $\mathcal{S}^V:=\{\hat{s}^1,\cdots,\hat{s}^V\}$ be 
a pre-selected set of points in
$S$ and $\{S_1,\cdots,S_V\}$ be a Voronoi tessellation of $S$ centered at $\mathcal{S}^V$ , i.e.,
\begin{equation*}
 S_j\subseteq\left\{s\in S:\|s-\hat{s}^j\|=\min_{k\in[V]}\|s-\hat{s}^k\|\right\}, \text{for}~ j\in[V]
\label{eq:Voronoi-tessellation}   
\end{equation*}
are pairwise disjoint subsets forming a partition of $S$. 
Let $s^1,\cdots,s^N$ be an i.i.d sample of $s$
where the sample size $N$ is usually larger than $V$ and $N_i$ denote the number of samples falling into area $S_i$.
Define
\begin{equation}
\setlength{\abovedisplayskip}{2pt}
\setlength{\belowdisplayskip}{2pt}
\label{def:vt-projection}
Q_V(\cdot):=\sum_{j=1}^V\frac{N_i}{N}\delta_{\hat{s}^j}(\cdot),
\end{equation}
 where $\delta_{\hat{s}^j}(\cdot)$ denotes the Dirac probability measure located at $\hat{s}^j$. 
 Let  $\mathscr{P}(\mathcal{S}^V)$ be the set of all 
 probability distributions supported by $\mathcal{S}^V$.
 Note that $Q_V$ is the Voronoi projection of $Q_N$ onto space $\mathscr{P}(\mathcal{S}^V)$. 
We define a Kantorovich ball
in the space of $\mathscr{P}(\mathcal{S}^V)$ centered at $Q_V$
\begin{equation}
\setlength{\abovedisplayskip}{2pt}
\setlength{\belowdisplayskip}{2pt}
\label{eq:Kantorovichball-finite}
\mathfrak{Q}_K^V(Q_V,r)=\left\{Q\in\mathscr{P}(\mathcal{S}^V)|\mathsf{dl}_K(Q,Q_V)\leq r\right\}.
\end{equation}
We use $\mathfrak{Q}_K^V(Q_V,r)$
to approximate $\mathfrak{Q}_K(Q_N,r)$.

\subsection{Construction of Kusuoka's ambiguity set}
\label{construction-Kasuoka's-set}
In the literature of behavioural economics and preference robust optimization, a widely used approach
for eliciting a DM’s preference is pairwise comparison, that is, the DM is given
a pair of prospects for choice and the outcome of the choice is used to infer the DM’s
utility/risk preference. Here we use the same approach for constructing  Kusuoka's ambiguity set. 
In order for this approach to work, we need the following assumption.

\begin{assumption}
\label{asm:Kasuoka-ambiguityset}
Let $\mathscr{M}$ denote the set
of probability measures over $[0,1]$.
There exists a probability measure $\mu \in \mathscr{M}$
such that the DM's risk preference can be characterized by
$\int_0^1 \inmat{CVaR}_\alpha(\cdot)d\mu (\alpha)$.
\end{assumption}

The assumption is justified
in the case where 
a DM's preference can be used by a randomized $\inmat{CVaR}_\alpha(\cdot)$ but
there is short of complete information
on the distribution of $\alpha$.
Since 
\bgeqn
\label{def:kusuoka}
\left\{
\rho_\mu =\int_0^1 \inmat{CVaR}_\alpha(\cdot)d\mu (\alpha):
\mu\in \mathscr{M}
\right\}
\edeqn
forms a subset of LICRMs, 
the results established in this subsection 
cannot be applied to the case that 
a DM's risk preference 
is described by a general LICRM. Note that $\rho_\mu$ defined as \eqref{def:kusuoka} is a law invariant and comonotone coherent risk measure with the Fatou
property \cite[Theorem 7]{Kus01}.

Let $\{A_j,B_j\}_{j=1}^J$ be lotteries
for pairwise comparisons, that is,
the DM is given a pair $\{A_j,B_j\}$ and asked to select a preferable one.
Suppose that the DM is found to prefer $A_j$ over $B_j$
for $j\in[J]$. Based on  Assumption \ref{asm:Kasuoka-ambiguityset}, we can use the preference information to 
construct 
an ambiguity set of probability measures over $[0,1]$ as follows:
\bgeqn
\label{Pair-Kasuoka-ambiguityset}
\mathcal{M}_{pair}:=\left\{\mu\in\mathscr{M}:\int_0^1 \inmat{CVaR}_\alpha(A_j)d\mu (\alpha)\leq\int_0^1 \inmat{CVaR}_\alpha(B_j)d\mu (\alpha), \ j\in[J]\right\}.
\edeqn

\section{Tractable reformulations}
In this section, we discuss the tractable reformulations of (\ref{eq:Exp-RSRM-R}) under 
the ambiguity set $\mathfrak{Q}_K^V(Q_V,r)$. 
Moreover,
we investigate the reformulation of the Kasuoka's representation.

 \subsection{Tractable reformulations of the DR-ARSRM }
 
 We begin with tractable formulation of the $\inmat{DR-ARSRM}$ of discrete random loss $X$, i.e., 
$\mathbb{P}(X=x_k)=p_k$ for $k\in[K]$, where $x_1<x_2<\cdots<x_K$.
 
\begin{proposition}\label{thm:refm-phidivergence}
 The $\inmat{DR-ARSRM}$ of $X$ 
 under the Kantorovich ambiguity set $\mathfrak{Q}_K^V(Q_V,r)$ equals to the optimal value of the linear programming problem:
 \begin{equation*}
 \begin{split}
     \sup_{\boldsymbol{q},\boldsymbol{w}} \quad&\sum_{i=1}^V q_i\sum_{k=1}^K x_k\psi_{i,k}\\
     \inmat{s.t.}\quad&\sum_{j=1}^Vw_{ij}=q_i,\sum_{i=1}^Vw_{ij}={\color{black}\frac{N_j}{N}}, ~ \forall ~  i,j\in[V],\\
     & \sum_{i=1}^V\sum_{j=1}^Vw_{ij}|\hat{s}^i-\hat{s}^j|\leq r, \sum_{i=1}^Vq_i = 1,\\
    &w_{ij}\geq 0,~q_i\geq 0, ~ \forall  ~  i,j\in[V],
     \end{split}
\label{eq:DRSRM-Kanto-reformulate}
 \end{equation*}
 where $\psi_{i,k}=\int_{\pi_k}^{\pi_{k+1}}\sigma(t,\hat{s}^i)dt$ and $\pi_k=\sum_{i=1}^{k-1}p_i$ for all $i\in[V],k\in[K]$.
\end{proposition}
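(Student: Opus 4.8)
The plan is to exploit the finite support of the ambiguity set $\mathfrak{Q}_K^V(Q_V,r)$ so that the worst-case expectation collapses to a finite-dimensional linear program. First I would parametrize any $Q\in\mathscr{P}(\mathcal{S}^V)$ by its weight vector $\boldsymbol{q}=(q_1,\ldots,q_V)^\top$ with $q_i\geq 0$ and $\sum_{i=1}^V q_i=1$, writing $Q=\sum_{i=1}^V q_i\delta_{\hat{s}^i}$. Using the discrete representation of the RSRM derived in Section~\ref{sec:calmeanofsrm}, namely $\rho_{\sigma(\cdot,\hat{s}^i)}(X)=\sum_{k=1}^K x_k\psi_{i,k}$ with $\psi_{i,k}=\int_{\pi_k}^{\pi_{k+1}}\sigma(t,\hat{s}^i)dt$ and $\pi_k=\sum_{i=1}^{k-1}p_i$, the objective becomes linear in $\boldsymbol{q}$:
$$\mathbb{E}_Q\left[\rho_{\sigma(\cdot,s)}(X)\right]=\sum_{i=1}^V q_i\sum_{k=1}^K x_k\psi_{i,k}.$$

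Next I would reformulate the ambiguity constraint $\mathsf{dl}_K(Q,Q_V)\leq r$. Because both $Q$ and $Q_V$ are supported on the finite set $\mathcal{S}^V$, any coupling $\Pi$ appearing in the definition of the Kantorovich metric reduces to a nonnegative matrix $\boldsymbol{w}=(w_{ij})$ whose row sums recover the marginal $q_i=\sum_j w_{ij}$ and whose column sums recover the marginal weights $N_j/N$ of $Q_V$, i.e.\ $\sum_i w_{ij}=N_j/N$. Hence $\mathsf{dl}_K(Q,Q_V)$ is itself the optimal value of a finite transportation LP with cost $\sum_{i,j}w_{ij}\|\hat{s}^i-\hat{s}^j\|$, and the constraint $\mathsf{dl}_K(Q,Q_V)\leq r$ holds \emph{if and only if} there exists a feasible coupling $\boldsymbol{w}$ of transport cost at most $r$. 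This is where the only real subtlety lies: I must argue that the inner infimum is attained, which it is, since the transportation polytope is nonempty, compact and the cost is continuous, so that ``distance $\leq r$'' is genuinely equivalent to ``existence of a feasible coupling of cost $\leq r$''.

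Finally, since the objective does not involve the coupling variables $\boldsymbol{w}$, maximizing over $Q\in\mathfrak{Q}_K^V(Q_V,r)$ is equivalent to the joint maximization over $(\boldsymbol{q},\boldsymbol{w})$ subject to the two marginal families of equalities, the budget $\sum_{i,j}w_{ij}\|\hat{s}^i-\hat{s}^j\|\leq r$, the simplex constraint $\sum_i q_i=1$, and nonnegativity: for any feasible $\boldsymbol{q}$ the existence of a compatible $\boldsymbol{w}$ is exactly what the ambiguity constraint guarantees, and introducing $\boldsymbol{w}$ into the maximization changes neither feasibility nor the optimal value. This produces precisely the stated linear program. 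The main obstacle is the lifting step in the second paragraph --- correctly fusing the outer supremum over $Q$ with the inner infimum defining the Kantorovich distance into one optimization --- but this is routine once attainment of the transportation LP is recorded.
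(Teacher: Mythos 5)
Your proposal is correct and follows essentially the same route as the paper, which simply remarks that the result "can be easily obtained by substituting the ambiguity set $\mathfrak{Q}_K^V(Q_V,r)$ into the DR-ARSRM problem": parametrize $Q$ by its weights on $\mathcal{S}^V$, use the discrete representation $\rho_{\sigma(\cdot,\hat{s}^i)}(X)=\sum_{k=1}^K x_k\psi_{i,k}$, and express the Kantorovich constraint as a transportation polytope with coupling variables $w_{ij}$. Your explicit treatment of attainment of the transportation infimum (so that $\mathsf{dl}_K(Q,Q_V)\leq r$ is genuinely equivalent to the existence of a coupling of cost at most $r$) is a detail the paper leaves implicit, and it is the right point to pin down.
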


The conclusion of Proposition \ref{thm:refm-phidivergence} can be easily obtained by substituting ambiguity set 
$\mathfrak{Q}_K^V(Q_V,r)$ 
into  the $\inmat{DR-ARSRM}$ problem (\ref{eq:Exp-RSRM-R}). 


\subsection{Tractable formulation of Kusuoka's representation}
\label{subsec:Kusuoka's-representation}
Let $\{A_j,B_j\}_{j=1}^J$ be
a set of lotteries for
pairwise comparisons with finite outcomes, i.e.,
$\mathbb{P}(A_j=a^j_{k})=p^{j}_k$ and $\mathbb{P}(B_j=b^j_{k})=q^{j}_k$ for $k\in[K]$ and $j\in[J]$, where $a^j_{1}<a^j_{2}<\cdots<a^j_{K}$ and $b^j_{1}<b^j_{2}<\cdots<b^j_{K}$.
Following our discussions in Section \ref{sec:calmeanofsrm},
\begin{equation*}
\begin{split}
\int_{0}^1\inmat{CVaR}_\alpha(X)\mu(d\alpha) 
&= \int_{0}^1\sum_{k=1}^K x_k\left(\frac{1}{1-\alpha}\int_{\pi_k}^{\pi_{k+1}}\mathbbm{1}_{[\alpha,1]}(t)dt\right)\mu(d\alpha)\\
&= \sum_{k=1}^K x_k \int_{\pi_k}^{\pi_{k+1}} \left(\int_{0}^1\frac{1}{1-\alpha}\mathbbm{1}_{[\alpha,1]}(t)\mu(d\alpha)\right)dt\\
&=\sum_{k=1}^K x_k \int_{\pi_k}^{\pi_{k+1}}g(t) dt,
\end{split}
\end{equation*}
where $g(t)=\int_{0}^1\frac{1}{1-\alpha}\mathbbm{1}_{[\alpha,1]}(t)\mu(d\alpha)$. 
We consider the case that 
$\alpha$ 
has a finite support, denoted by
$\mathcal{A}=\{\alpha_1,\cdots,\alpha_N\}$, where $\alpha_1<\cdots<\alpha_N$. Let $m_i=\mu(\alpha=\alpha_i)$ for $i\in[N]$. Under Assumption~\ref{asm:Kasuoka-ambiguityset}, the DM's risk preference can be represented by 
$\rho(\cdot) = \sum_{i=1}^N m_i\inmat{CVaR}_{\alpha_i}(\cdot)$
in this case.
Moreover, 
we have $g(t)=\sum_{i=1}^N \frac{m_i}{1-\alpha_i}\mathbbm{1}_{[\alpha_i,1]}(t)$,
\begin{equation*}
   \int_{0}^1\inmat{CVaR}_\alpha(X)\mu(d\alpha) = \sum_{k=1}^K x_k\sum_{i=1}^N\frac{m_i}{1-\alpha_i}\int_{\pi_k}^{\pi_{k+1}}\mathbbm{1}_{[\alpha_i,1]}(t)dt=X_K^\top\Pi\boldsymbol{m}, 
\end{equation*}
\begin{equation*}
    \int_{0}^1\inmat{CVaR}_\alpha(A_j)\mu(d\alpha) = \sum_{k=1}^K x_k\sum_{i=1}^N\frac{m_i}{1-\alpha_i}\int_{\gamma^j_k}^{\gamma^j_{k+1}}\mathbbm{1}_{[\alpha_i,1]}(t)dt=A^{j\top}_K C^j \boldsymbol{m}
\end{equation*}
and
\begin{equation*}
    \int_{0}^1\inmat{CVaR}_\alpha(B_j)\mu(d\alpha) = \sum_{k=1}^K x_k\sum_{i=1}^N\frac{m_i}{1-\alpha_i}\int_{\eta^j_k}^{\eta^j_{k+1}}\mathbbm{1}_{[\alpha_i,1]}(t)dt=B^{j\top}_K D^j \boldsymbol{m},
\end{equation*}
where $\boldsymbol{m}=(m_1,\cdots,m_N)^\top\in\R_+^N$, 
$\Pi, C^j, D^j\in\R^{K\times N}$, 
$\Pi_{ki} = \frac{1}{1-\alpha_i}\int_{\pi_k}^{\pi_{k+1}}\mathbbm{1}_{[\alpha_i,1]}(t)dt$, 
$C^j_{ki} = \frac{1}{1-\alpha_i}\int_{\gamma^j_k}^{\gamma^j_{k+1}}\mathbbm{1}_{[\alpha_i,1]}(t)dt$,
$D^j_{ki} = \frac{1}{1-\alpha_i}\int_{\eta^j_k}^{\eta^j_{k+1}}\mathbbm{1}_{[\alpha_i,1]}(t)dt$, $\gamma_{k}^j=\sum_{i=0}^{k-1}p_k^j$ and $\eta_{k}^j=\sum_{i=0}^{k-1}q_k^j$ 
for all $k\in[K]$ and $i\in[N]$. Subsequently
 $   \sup_{\mu\in\mathcal{M}_{pair}}\int_{0}^1\inmat{CVaR}_\alpha(X)\mu(d\alpha)
$
can be formulated as a linear program:
\begin{equation}
\label{Kusuoka-discrete}
    \begin{split}
        \sup_{\boldsymbol{m}\in\R^N_+} \quad&X_K^\top\Pi\boldsymbol{m}\\
        \st \quad& \sum_{i=1}^N m_i =1,\\
        & A^{j\top}_K C^j\boldsymbol{m} \leq B^{j\top}_K D^j \boldsymbol{m}, \  j\in[J].
    \end{split}
\end{equation}
In the next example, we explain how Kusuoka's ambiguity set may be reduced using the pairwise comparison preference elicitation approach.


\begin{example}
\label{exm:kuosuoka-ambiguityset}
Let $\mathcal{A}=\{0.33,0.66,0.99\}$ 
and assume that the true probability distribution of 
$\alpha$, denoted by $\mu^*$, satisfies 
$\mu^*(\alpha=0.33) =0.3$, $\mu^*(\alpha=0.66) =0.3$
and $\mu^*(\alpha=0.99) =0.4$.
Then the DM's true risk measure
can be written as
$$
\rho^*(\cdot)=0.3\inmat{CVaR}_{0.33}(\cdot)+0.3\inmat{CVaR}_{0.66}(\cdot)+0.4\inmat{CVaR}_{0.99}(\cdot).
$$
We will use $\rho^*(\cdot)$ to determine which lottery is preferred in pairwise comparison $($in other words, 
$\rho^*(\cdot)$ acts as the DM$)$.
To elicit the preference, we
ask the DM 
questions $\{A^j,B^j\}$, $j\in [J]$ for pairwise comparisons. 
Let 
$$\boldsymbol{u}^j = \left(\inmat{CVaR}_{0.33}(A^j),\inmat{CVaR}_{0.66}(A^j),\inmat{CVaR}_{0.99}(A^j) \right)$$ 
and 
$$\boldsymbol{v}^j = \left(\inmat{CVaR}_{0.33}(B^j),\inmat{CVaR}_{0.66}(B^j),\inmat{CVaR}_{0.99}(B^j) \right).$$ 
Then we can obtain the
feasible region of $\boldsymbol{m}$ after the observing
the answer to question $j$ 
$$
\mathcal{M}^{j} := 
\mathcal{M}^{j-1}\bigcap\left\{\begin{matrix}\{\boldsymbol{m}\in\R_+^3:\boldsymbol{u}^j\boldsymbol{m} \leq \boldsymbol{v}^j\boldsymbol{m}\} & \text{if} \ \rho^*(A^j)\leq\rho^*(B^j)\\
\{\boldsymbol{m}\in\R_+^3: \boldsymbol{u}^j\boldsymbol{m} \geq\boldsymbol{v}^j\boldsymbol{m}\} & \text{if} \ \rho^*(A^j)\geq\rho^*(B^j)
\end{matrix}\right\},
$$
where $\mathcal{M}^0 :=\{\boldsymbol{m}\in\R_+^3:\sum_{i=1}^3m_i=1\}$. 
In the $j$th question, we randomly sample $A^j$ following a uniform distribution over $(0,50)$ and $B^j$ following a normal distribution with mean $30$ and standard deviation $6$ of cardinality $K=100$. 
Specifically, 
we observe $\rho^*(A^1)\leq\rho^*(B^1)$ in the first question. 
Thus we obtain 
a cut 
$4.1718m_1-0.16m_2-2.3192m_3\geq0$ of $\mathcal{M}^0$ (shown as Figure \ref{fig:CA_Q1}). 
In the second question,  
we observe 
that $\rho^*(A^2)\geq\rho^*(B^2)$
and hence
construct a cut 
$-2.7486m_1+2.3195m_2+5.6129m_3\geq0$ of $\mathcal{M}^1$ (shown as Figure \ref{fig:CA_Q2}). 
\begin{figure}[H]
  \centering
  \subfigure[]{
    \label{fig:CA_Q1}
    \includegraphics[scale=0.45]{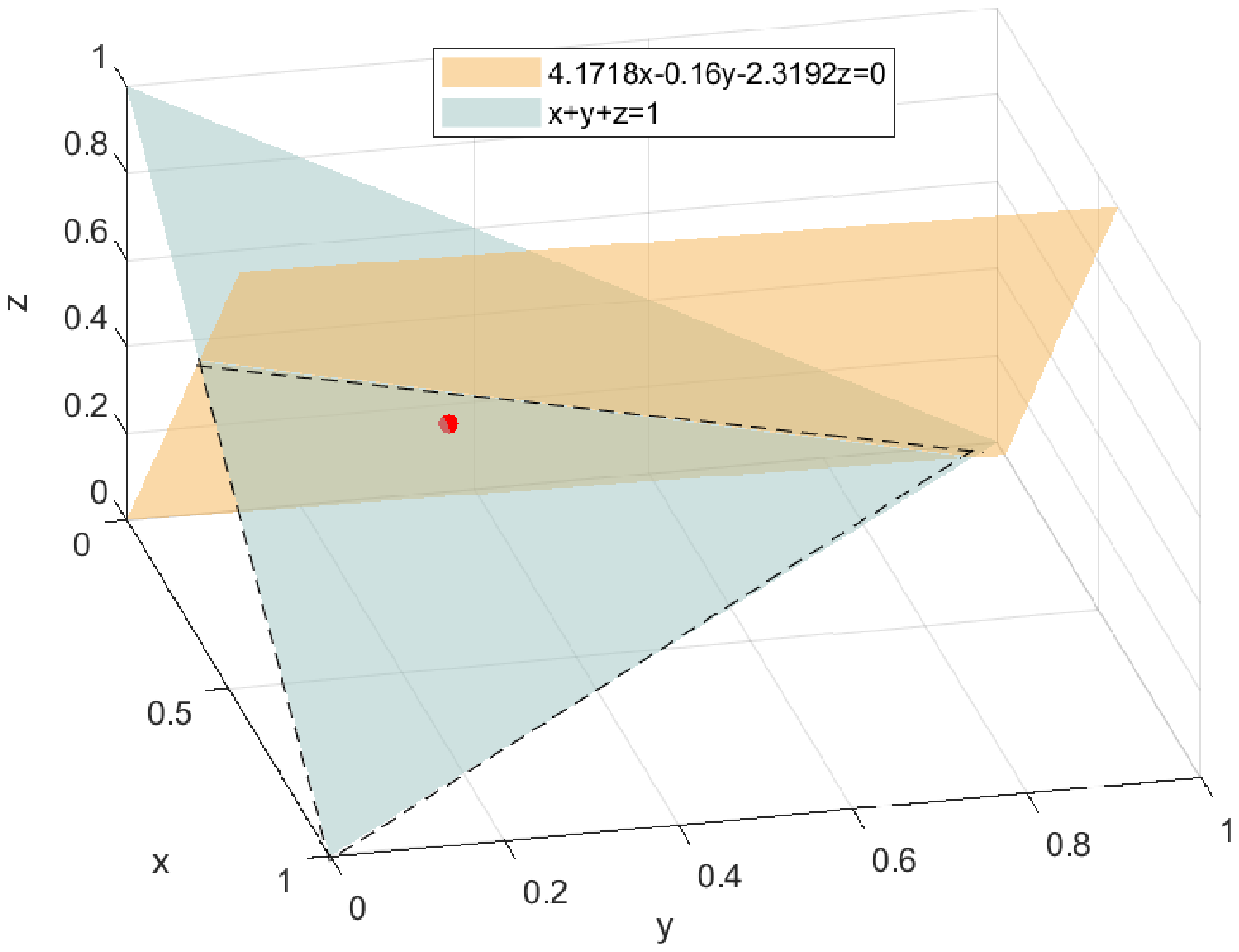}}
 \subfigure[]{
    \label{fig:CA_Q2}
    \includegraphics[scale=0.45]{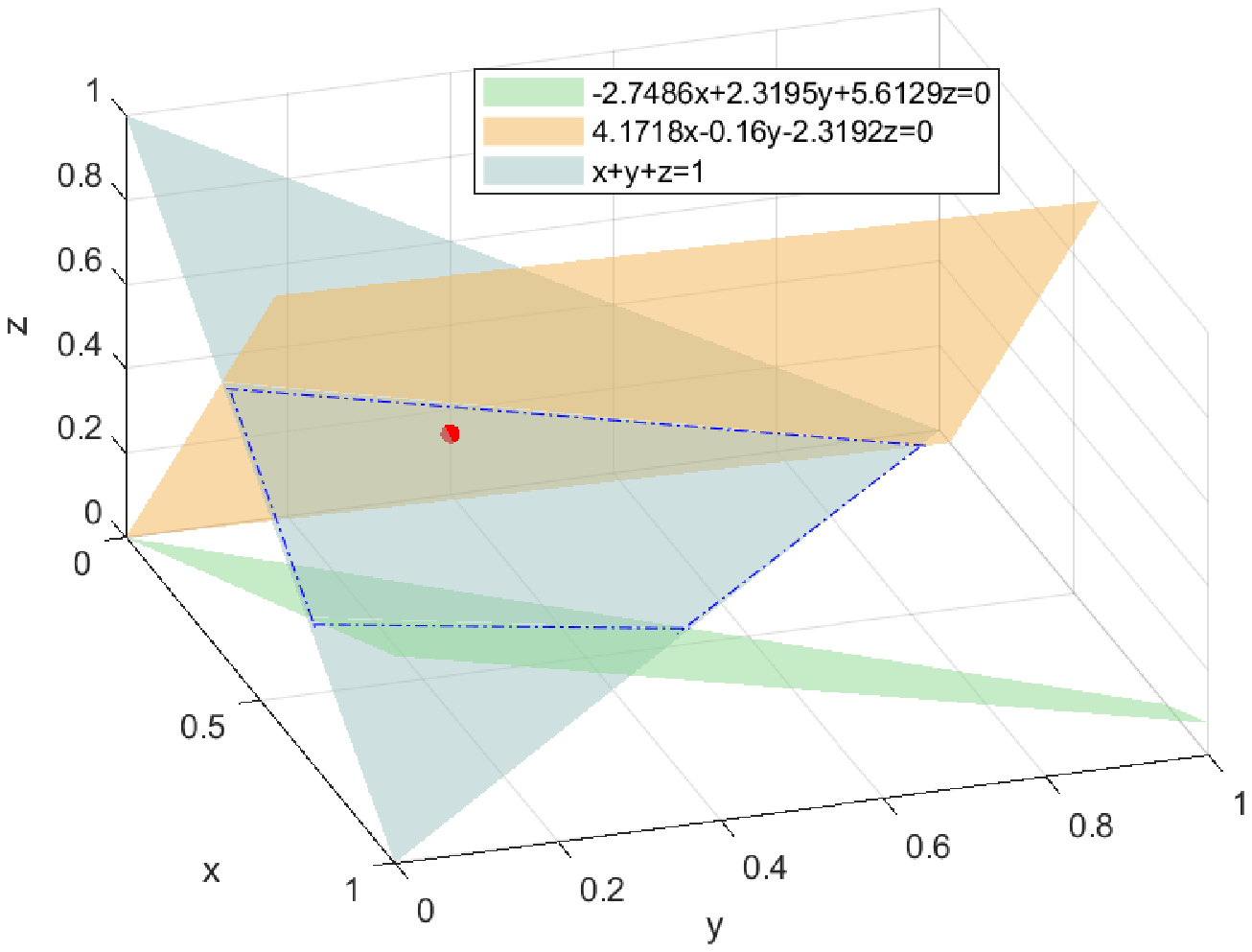}}
  \caption{(a) 
  The blue triangle 
 with  
black dashed boundaries represents $\mathcal{M}^1$. 
(b) The blue 
quadrangle with dark blue
dashed boundaries
represents $\mathcal{M}^2$.
The red dot 
  stands for $\boldsymbol{m}^*=(0.3,0.3,0.4)$.}
  \label{fig:section4-exam}
\end{figure}
\end{example}

$$
$$





\section{Optimal decision making based on DR-ARSRM}
In this section, we move to discuss
optimal decision making problems 
based on DR-ARSRM. Specifically, 
let $f(z,\xi)$ be a loss function where
$z$ is a decision vector and $\xi$ is a vector of random variables. 
We consider the following minimax optimization problem:
\begin{equation}\label{def:DR-RSRM-Opt}
\setlength{\abovedisplayskip}{2pt}
\setlength{\belowdisplayskip}{2pt}
\inmat{(DR-ARSRM-Opt)}\quad\min_{z\in Z} \ \max_{Q\in\mathfrak{Q}} \int_S \rho_{\sigma(\cdot,s)}(f(z,\xi))Q(ds), 
\end{equation}
where $\mathfrak{Q}\subset\mr{P}(S)$ denotes
the  ambiguity set of 
distributions of $s$,
$Z$ is a compact convex set 
and $f(z,\xi)$ is a convex function 
in $z$ for all $\xi$. 

To facilitate derivation of tractable formulations of the minimax problem, we restrict our discussions throughout this section to the case that $s$ is discretely distributed and $\sigma(\cdot,s)$ is a step-like function for each fixed $s$.
Specifically, we consider:
\begin{equation}
\setlength{\abovedisplayskip}{2pt}
\setlength{\belowdisplayskip}{2pt}
\inmat{(Appr-DR-ARSRM-Opt)}\quad\min_{z\in Z} \ \max_{\boldsymbol{q}\in\mathfrak{Q}^V} \sum_{i=1}^V
q_i\rho_{\sigma_M(\cdot,\hat{s}^i)}(f(z,\xi)),
\label{def:DR-RSRM-Opt-approx}
\end{equation}
where $\mathfrak{Q}^V\subset\mathscr{P}(\mathcal{S}^V)$ is the subset of 
distributions supported
by $\mathcal{S}^V=\{\hat{s}^1,\cdots,\hat{s}^V\}$
and $\sigma_M(\cdot,\hat{s}^i)$ is a step-like function over $[0,1]$ with breakpoints $ \{t_1,\cdots,t_M\}$, i.e.,  for fixed $\hat{s}^i\in \mathcal{S}^V$
\begin{equation*}
\setlength{\abovedisplayskip}{2pt}
\setlength{\belowdisplayskip}{2pt}
\sigma_{M}(t,\hat{s}^i)=\sum_{i=0}^M\sigma_i(\hat{s}^i)\mathbbm{1}_{[t_i,t_{i+1})}(t).
\end{equation*}
Since problem (\ref{def:DR-RSRM-Opt-approx}) may be regarded as an approximation of problem 
(\ref{def:DR-RSRM-Opt}), we denote it 
by Appr-DR-ARSRM-Opt.
In what follows, we study the computational schemes for solving (\ref{def:DR-RSRM-Opt-approx}) under the ambiguity set $\mathfrak{Q}_K^V(Q_V,r).$
  We propose
to solve the minimax optimization problem
by using an alternative interative 
scheme, i.e., for fixed $z$, solve the inner maximization problem and then for fixed $\boldsymbol{q}$, solve the outer minimization problem.

\noindent
\underline{\textbf{1. Solving the inner maximization problem}}.
Let  $z\in Z$ be fixed.
We consider a situation where $\xi$ is uniformly finitely distributed over $\{\xi^1,\cdots,\xi^K\}$ with $f(z,\xi^1)<f(z,\xi^2)<\cdots<f(z,\xi^K)$.  For a given set of step-like risk spectra $\mathfrak{S}_M(\mathcal{S}^V) := \{\sigma_M(\cdot,\hat{s}^1),\cdots,\sigma_M(\cdot,\hat{s}^V)\},$
 the inner maximization of  (\ref{def:DR-RSRM-Opt-approx}) under the ambiguity set $\mathfrak{Q}^V_K(Q_V,r)$
can be written 
as 
\begin{equation}\label{eq:DSRM-OPT-Kantorovich}
 \begin{split}
\sup_{\boldsymbol{q},\boldsymbol{w}} \quad&\sum_{i=1}^V q_i\sum_{k=1}^K f(z,\xi^k)\psi_{i,k}\\
\st\quad&\sum_{j=1}^Vw_{ij}=q_i, \sum_{i=1}^Vw_{ij}={\color{black}\frac{N_j}{N}}, ~ \forall ~  i,j\in[V],\\
&\sum_{i=1}^V\sum_{j=1}^Vw_{ij}|\hat{s}^i-\hat{s}^j|\leq r,\sum_{i=1}^Vq_i=1,\\
&w_{ij}\geq 0,~q_i\geq 0, ~ \forall  ~  i,j\in[V], 
\end{split}
\end{equation}
  via Proposition \ref{thm:refm-phidivergence}, where $\psi_{i,k}=\int_{\frac{k-1}{K}}^{\frac{k}{K}}\sigma_M(t,\hat{s}^i)dt=\sum_{j=0}^M\sigma_j(\hat{s}^i)\int_{\frac{k-1}{K}}^{\frac{k}{K}}\mathbbm{1}_{[t_i,t_{i+1})}(t)dt$ for all $i\in[V],k\in[K]$. 
Note that 
$\psi_{i,k}$ depends on not only $\hat{s}^i$ 
but also 
the cdf of $f(z,\xi)$. Thus the formulations require us to sort out the order of the outcomes of $f(z,\xi)$ in the first place for  fixed $z$.

\noindent
\underline{\textbf{2. Solving the outer minimization problem}}. 
Let $\boldsymbol{q}$ be fixed.
Following a similar strategy to that of 
\cite{wx21a}, we have
\begin{align}
    \rho_{\boldsymbol{q}}(f(z,\xi)) &= \sum_{i=1}^V q_i\sum_{k=1}^K f(z,\xi^k)\psi_{i,k}= \sum_{i=1}^V\sum_{k=1}^K q_i \left(\psi_{i,k}-\psi_{i,k-1}\right)\left(\sum_{j=k}^Kf(z,\xi^j)\right)\nonumber\\
    &=\sum_{i=1}^V\sum_{k=1}^K q_i (\psi_{k,i}-\psi_{k-1,i})(K-k+1)\left(\frac{1}{K-k+1}\sum_{j=k}^Kf(z,\xi^j)\right)\nonumber\\
    &= \sum_{i=1}^V\sum_{k=1}^K q_i\beta_{ik}\inmat{CVaR}_{\alpha_k}(f(z,\xi)),
\label{srm-re-cvar}
\end{align}
 where $\beta_{ik}=(\psi_{i,k}-\psi_{i,k-1})(K-k+1)$, $\alpha_k=\frac{k-1}{K}$ and $\psi_{i,0}=0$ for all $i\in[V],k\in[K]$.
we can recast it as:
\begin{equation}
\inmat{CVaR}_{\alpha}(X)=\min_{\eta\in\R} \ \eta+\frac{1}{1-\alpha}\mb{E}[(X-\eta)_+],
\label{def:CVaR-eq}
\end{equation}
where $(a)_+=\max\{a,0\}$. 
By substituting  (\ref{def:CVaR-eq}) into
(\ref{srm-re-cvar}),
we obtain
\begin{equation}
\label{eq:srm-Opt-CVaR}
    \min_{z\in Z}\rho_{\boldsymbol{q}}(f(z,\xi))=\min_{z\in Z,\boldsymbol{\eta}}\sum_{i=1}^V\sum_{k=1}^K q_i\beta_{ik}\left\{\eta_k+\frac{1}{1-\alpha_k}\mathbb{E}[(f(z,\xi)-\eta_k)_+]\right\}.
\end{equation}
A clear benefit of using the CVaR formulation is that we don't have to worry about 
the order of the outcomes 
of $f(z,\xi)$ in calculation of
ARSRM $\rho_{\boldsymbol{q}}(f(z,\xi))$ (see our comments in Section \ref{sec:calmeanofsrm}).
 Moreover, by introducing auxiliary variables,
 we can reformulate (\ref{eq:srm-Opt-CVaR}) 
 as
 \begin{equation}\label{eq:srm-Opt-CVaR-re}
 \begin{split}
\min_{z\in Z,\boldsymbol{\gamma},\boldsymbol{\eta},\boldsymbol{\zeta}}\quad&\sum_{i=1}^V\sum_{k=1}^K q_i\beta_{ik} \gamma_k\\
     \st\quad& \eta_k+\frac{1}{1-\alpha_k}\sum_{j=1}^K\frac{1}{K}\zeta_{jk}\leq\gamma_k, ~ 
     \forall~k\in[K],\\
     & f(z,\xi^j)-\eta_k\leq\zeta_{jk}, ~\forall ~j,k\in[K],\\
     &\zeta_{jk}\geq 0,~\forall~j,k\in[K].
 \end{split}
 \end{equation}
In the case that $f$ is linear in $z$, (\ref{eq:srm-Opt-CVaR-re}) is a linear programming problem. Note that this formulation is based on the assumption that the probability distribution of the state variable $s$,
$\boldsymbol{q}=(q_1,\cdots,q_V)^\top$,
is known. 

\noindent
\underline{\textbf{3. Solving problem (\ref{def:DR-RSRM-Opt-approx})}}. 
Based on the discussions above, we 
are ready to present an alternating iterative algorithm for solving 
(\ref{def:DR-RSRM-Opt-approx}).
\begin{algorithm}[H]
    \caption{An alternating iterative algorithm for solving (\ref{def:DR-RSRM-Opt-approx}) under ambiguity set (\ref{eq:Kantorovichball-finite})}
    \label{alm:DRSRM}
    \begin{algorithmic}
        \State {{\bf Initialization} Choose  initial values $z^1$ and $\boldsymbol{q}^1$. For $\ell=1,2,\cdots$}
        \Repeat
        \State {1. For fixed $z^\ell$, sort out function values $\{f(z^{\ell},\xi^k)\}_{k=1}^K$ in non-decreasing order and denote the sorted values by $\{f(z^{\ell},\xi_{(k)})\}_{k=1}^K$.} 
        Solve problem (\ref{eq:DSRM-OPT-Kantorovich}) with an optimal solution denoted  by $\boldsymbol{q}^{\ell+1}$.
        \State {2. For fixed $\boldsymbol{q}^{\ell+1}$, solve problem (\ref{eq:srm-Opt-CVaR-re}) with an optimal solution  $z^{\ell+1}$.}
        \Until{$z^{\ell+1}=z^{\ell}$ and $\boldsymbol{q}^{\ell+1}= \boldsymbol{q}^{\ell}$}.\\
        \Return{$\boldsymbol{q}^*:=\boldsymbol{q}^{\ell+1}, z^*:=z^{\ell+1}$}.
    \end{algorithmic}
\end{algorithm}
\begin{proposition}
\label{pro:algthm-convergence}
Algorithm \ref{alm:DRSRM} either terminates in a finite number of steps with a solution to the problem (\ref{def:DR-RSRM-Opt-approx}) or generates a sequence $\{(z^\ell,\boldsymbol{q}^\ell)\}$ whose cluster points are optimal solutions to the problem (\ref{def:DR-RSRM-Opt-approx}).
\end{proposition}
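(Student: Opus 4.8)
The plan is to read (\ref{def:DR-RSRM-Opt-approx}) as a convex--concave saddle problem. Write $\Psi(z,\boldsymbol{q}):=\sum_{i=1}^V q_i\,\rho_{\sigma_M(\cdot,\hat{s}^i)}(f(z,\xi))$ for the common objective. Because each $\sigma_M(\cdot,\hat{s}^i)$ is a nonnegative, nondecreasing, normalized spectrum, every $\rho_{\sigma_M(\cdot,\hat{s}^i)}$ is a coherent (hence convex and monotone) risk measure, so composing with the $z$-convex map $z\mapsto f(z,\xi)$ keeps each summand convex in $z$; thus $\Psi(\cdot,\boldsymbol{q})$ is convex in $z$ for every $\boldsymbol{q}\ge 0$, while $\Psi(z,\cdot)$ is linear, hence concave, in $\boldsymbol{q}$. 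The set $Z$ is compact and convex by assumption, and $\mathfrak{Q}_K^V(Q_V,r)$ from (\ref{eq:Kantorovichball-finite}) is a nonempty polytope, being cut out by the linear constraints appearing in Proposition~\ref{thm:refm-phidivergence}. A minimax theorem then applies, so (\ref{def:DR-RSRM-Opt-approx}) admits a saddle point and its minimax value coincides with $\Psi$ at any such point. The proof therefore reduces to showing that the iterates of Algorithm~\ref{alm:DRSRM} approach the saddle set.

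First I would settle the finite-termination case, which is the clean part. If the stopping test fires at iteration $\ell$, then $\boldsymbol{q}^{\ell+1}=\boldsymbol{q}^\ell$ maximizes $\Psi(z^\ell,\cdot)$ over $\mathfrak{Q}_K^V(Q_V,r)$ and $z^{\ell+1}=z^\ell$ minimizes $\Psi(\cdot,\boldsymbol{q}^{\ell+1})$ over $Z$. Reading off these two optimality statements at the coincident point gives $\Psi(z^\ell,\boldsymbol{q})\le \Psi(z^\ell,\boldsymbol{q}^\ell)\le \Psi(z,\boldsymbol{q}^\ell)$ for all admissible $z$ and $\boldsymbol{q}$, i.e.\ $(z^\ell,\boldsymbol{q}^\ell)$ is a saddle point; the minimax theorem then identifies $z^\ell$ as an optimal solution of (\ref{def:DR-RSRM-Opt-approx}).

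For the non-terminating case my plan is to use the polyhedral structure to make the analysis essentially finite. In Step~1 the inner problem (\ref{eq:DSRM-OPT-Kantorovich}) maximizes a function that is linear in $\boldsymbol{q}$ over the polytope $\mathfrak{Q}_K^V(Q_V,r)$, so an optimal $\boldsymbol{q}^{\ell+1}$ can always be taken at a vertex; since there are finitely many vertices, $\{\boldsymbol{q}^\ell\}_{\ell\ge 2}$ ranges over a finite set. Compactness of $Z\times\mathfrak{Q}_K^V(Q_V,r)$ then yields cluster points. I would extract a subsequence along which $\boldsymbol{q}^{\ell_k+1}$ is a fixed vertex $\bar{\boldsymbol{q}}$ and $(z^{\ell_k},\boldsymbol{q}^{\ell_k})\to(z^*,\boldsymbol{q}^*)$, and pass to the limit in the two best-response conditions, using continuity of $\Psi$ and of the inner/outer optimal-value functions together with closedness of the maximizer and minimizer correspondences (Berge's maximum theorem) to conclude that $\bar{\boldsymbol{q}}$ maximizes $\Psi(z^*,\cdot)$ and that the associated $z$-iterate minimizes $\Psi(\cdot,\bar{\boldsymbol{q}})$. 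Combining the two limiting inequalities gives the saddle relations at the cluster point, and the minimax theorem again certifies optimality for (\ref{def:DR-RSRM-Opt-approx}).

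The hard part will be precisely this limiting step. Because Step~1 computes a best response to $z^\ell$ while Step~2 computes a best response to $\boldsymbol{q}^{\ell+1}$, the two responses are anchored at \emph{different} iterates, so a naive passage to the limit does not pair $z^*$ with $\boldsymbol{q}^*$ in a single saddle relation; indeed, alternating best response for convex--concave minimax problems can cycle, and finiteness of the vertex set alone does not forbid a nontrivial period. I would close the gap by exploiting that finiteness more strongly: once a tie-breaking (or strict-convexity/uniqueness) convention fixes the outer minimizer, the pair-sequence becomes eventually periodic, and along the chosen subsequence the consecutive iterates $z^{\ell_k}$ and $z^{\ell_k+1}$ then share a common limit, at which both best-response relations can be read off simultaneously. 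The delicate bookkeeping on which the whole argument rests is verifying that such a limit cycle must collapse onto the saddle set, i.e.\ that the maximization and minimization inequalities accumulated around one full period force the diagonal value of $\Psi$ to equal the common minimax value; this is the step I expect to require the most care, and the point where an additional regularity hypothesis (strict convexity of the outer subproblem, guaranteeing unique and hence continuous minimizers) is the natural device to invoke.
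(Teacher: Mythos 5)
Your finite-termination argument coincides with the paper's and is correct. The genuine gap is in the non-terminating case: you never actually prove that cluster points solve (\ref{def:DR-RSRM-Opt-approx}). You correctly diagnose the central obstruction --- Step 1's $\boldsymbol{q}^{\ell+1}$ is a best response to $z^{\ell}$ while Step 2's $z^{\ell+1}$ is a best response to $\boldsymbol{q}^{\ell+1}$, so the two inequalities are anchored at different iterates, and alternating best responses in convex--concave games can cycle --- but your proposed repair does not close it. The vertex-enumeration/eventual-periodicity/tie-breaking machinery (i) is not carried out: you explicitly leave open the decisive claim that a limit cycle must collapse onto the saddle set, which is exactly the assertion to be proved; and (ii) rests on hypotheses the proposition does not contain: strict convexity of the outer subproblem (hence uniqueness and continuity of the outer minimizer) is an added assumption, not available here, since the outer objective is a nonnegative combination of $\inmat{CVaR}_{\alpha_k}(f(z,\xi))$ with $f$ merely convex in $z$. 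Moreover, eventual periodicity does not follow from finiteness of the vertex set alone, because the $z$-iterates range over an infinite set unless the outer argmin map is single-valued --- which is precisely the extra regularity you are importing. So what is submitted is a proof of a modified proposition, with its hardest step still conjectural.

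For comparison, the paper's proof uses the same saddle-point framing but through the CVaR reformulation $v(z,\boldsymbol{q})=\sum_{i=1}^V\sum_{k=1}^K q_i\beta_{ik}\inmat{CVaR}_{\alpha_k}(f(z,\xi))$, which makes continuity and the convex--linear structure transparent (and sidesteps the reordering issue in the quantile representation that your $\Psi$ inherits). It handles finite termination exactly as you do, and disposes of the infinite case by contradiction: if a cluster point $(\tilde z,\tilde{\boldsymbol{q}})$ of $\{(z^{\ell+1},\boldsymbol{q}^{\ell+1})\}$ violated the minimization-side inequality in (\ref{eq:saddle-point}), i.e.\ $v(\tilde z,\tilde{\boldsymbol{q}})>v(z_0,\tilde{\boldsymbol{q}})$ for some $z_0\in Z$, then by continuity $v(z^{\ell+1},\boldsymbol{q}^{\ell+1})>v(z_0,\boldsymbol{q}^{\ell+1})$ for large $\ell$ along the subsequence, contradicting (\ref{ieq:contra}); no vertices, periodicity, or Berge's theorem are needed. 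It is fair to note that this contradiction works cleanly only for the inequality whose indices match the cluster sequence, and that the paper's ``likewise'' for the maximization side is terse on precisely the index mismatch you identified (it implicitly requires $z^{\ell_k}$ and $z^{\ell_k+1}$ to share a limit). Your diagnosis of that subtlety is sharper than the paper's treatment of it --- but identifying an obstruction is not overcoming it, and as a proof of the stated proposition your proposal is incomplete where the paper's (admittedly sketched) argument is at least a complete strategy.
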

The proof is standard, we 
include a sketch of it 
in Appendix \ref{pro:algthm-convergence-proof}.

\section{Numerical tests}
We have carried out some numerical tests to validate (\ref{def:DR-RSRM-Opt-approx}) in the context of a stylized portfolio selection problem. All of the tests are carried out in Matlab 2021b installed in a Dell Intel Core i5 processor at 2.50 GHz and 8 GB of RAM and the optimization problems are solved 
by calling Gurobi 9.5.2 and fmincon solvers through the Yalmip interface. In this section, we report the details of the tests.

\subsection{Setup}

Consider a capital market consisting of $m$ assets whose yearly returns are captured
by the random vector $\xi  = (\xi_1,\cdots, \xi_m)^\top$. The total available funding is normalized to one and 
short-selling is not allowed.
Let  
$z = (z_1,\cdots,z_m)^\top$ 
denote the vector of allocations and 
$Z=\{z\in\R^m_+:\sum_{i=1}^m z_i=1\}$ the feasible set. 
For a given $z$, 
the total portfolio return is $z^\top\xi$ . 
We consider a case that the portfolio manager's  
risk attitude can be described by a random
spectral risk measure and base the optimal 
decision on the worst-case ARSRM:
\begin{equation}\label{eq:testproblem}
    \min_{z\in\Z}\max_{Q\in\mathfrak{Q}}\mathbb{E}_Q[\rho_{\sigma(\cdot,s)}(-z^\top\xi)].
\end{equation}
To solve the problem, we replace $\mathfrak{Q}$ with $\mathfrak{Q}^V$
and solve the approximate DR-ARSRM:
\begin{equation}\label{eq:testproblem-approx}
    \min_{z\in\Z}\max_{Q\in\mathfrak{Q}^V}\mathbb{E}_Q[\rho_{\sigma_M(\cdot,s)}(-z^\top\xi)],
\end{equation}
which is parallel to (\ref{def:DR-RSRM-Opt-approx}).
The tests are based on a market with $m=10$ assets as considered by Esfahani and Kuhn \cite{ek18}.
We follow them  to assume that the return $\xi_i$ is decomposed into a systematic risk factor $\varphi\sim\mathcal{N}(0,2\%)$ common to all assets and an unsystematic or idiosyncratic risk factor $\zeta_i\sim\mathcal{N}(i\times 3\%,i\times 2.5\%)$ specific to assets $i$ for $i\in\{1,\cdots,10\}$. Specifically we set $\xi_i=\varphi+\zeta_i$, where $\varphi $ and $\zeta_i$ constitute independent normal random variables. We generate $K$ i.i.d samples and select the breakpoints $t_1,\cdots,t_M$ from $\{\frac{1}{K},\frac{2}{K},\cdots,\frac{K-1}{K}\}$.
We consider two types of the
true random risk spectra: 
 {\bf(a)}
 $\sigma(t,s)=s(1-t)^{s-1}$ for $s\in (0,1)$ (randomized Wang’s proportional hazards transform), and 
 {\bf(b)} 
 $\sigma(t,\alpha)=\frac{1}{1-\alpha}\mathbbm{1}_{[\alpha,1]}(t)$ for $s\in (0,1)$. 
  For case (a), we 
consider step-like approximation:
    \begin{equation*}
         \sigma_M(t,s)=\left\{\begin{array}{lcl}
         \sigma(t_i,s), & & \inmat{for}\; t\in[t_i,t_{i+1}), i\in[M_0]\backslash\{M\},\\
          M+1-\sum_{i=0}^{M-1}\sigma(t_i,s),& & 
          \inmat{for}\;
          t\in[t_M,1).
         \end{array}\right.
     \end{equation*}

\subsection{
%
Randomized Wang’s proportional hazards transform}
{\color{black}

The first set of tests
concerns model (\ref{def:DR-RSRM-Opt}) with $\mathfrak{Q}_K^V(Q_V,r)$ being 
the discrete Kantorovich ambiguity set. In this case, the true 
risk spectrum takes a form of  
$\sigma(t,s)=s(1-t)^{s-1}$ for $s\in (0,1)$.

We begin by investigating the impact of 
the radius $r$ of the ambiguity set  
on the optimal value and optimal allocations/weights.
We solve problem (\ref{eq:testproblem-approx}) via Algorithm \ref{alm:DRSRM} using training datasets of cardinalities $N\in\{10,50,300\}$, $K=100$, the number of breakpoints $M=9$ with $t_i=\frac{i}{M+1}$ for $i\in[M_0]$ and the radius $r\in\{0,0.1,0.2,0.3,0.4,0.5,0.6,0.7,0.8,0.9,1\}$. Figures \ref{fig:Opt_portfolio_sample} and \ref{fig:Opt_value_sample}  depict the optimal portfolio wights $z(r)$ and the corresponding optimal value. The numerical results show that the optimal portfolio weights shift to the first six assets and they are becoming more and more evenly distributed.
This is perhaps because the DM becomes increasing conservative. 
The optimal value becomes larger as the radius $r$ increases, which confirms the fact that the range of the Kantorovich ball is lager when the radius $r$ increases, and thus the optimization problem (\ref{eq:testproblem-approx}) has a greater optimal value. 
}


\begin{figure}[H]
  \centering
  \subfigure[]{
    \label{fig:Opt_portfolio_sample10}
    \includegraphics[scale=0.4]{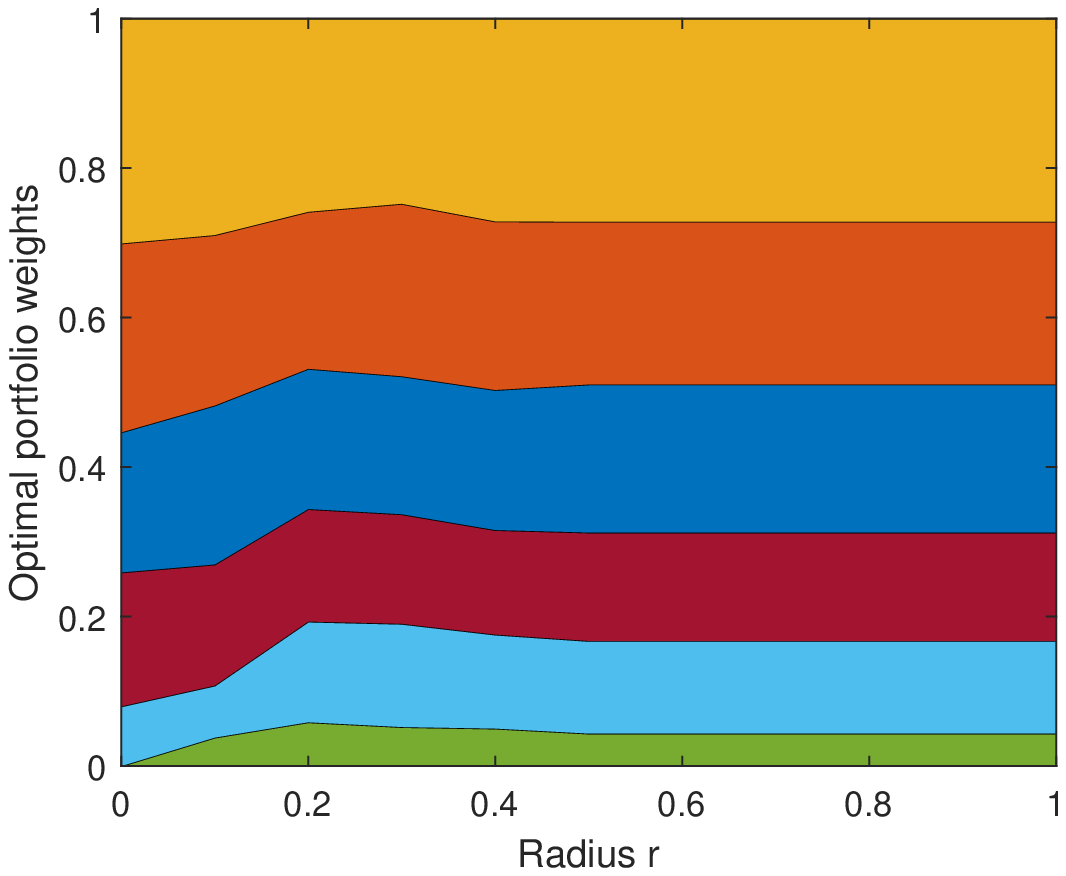}}
  \subfigure[]{
    \label{fig:Opt_portfolio_sample50}
    \includegraphics[scale=0.4]{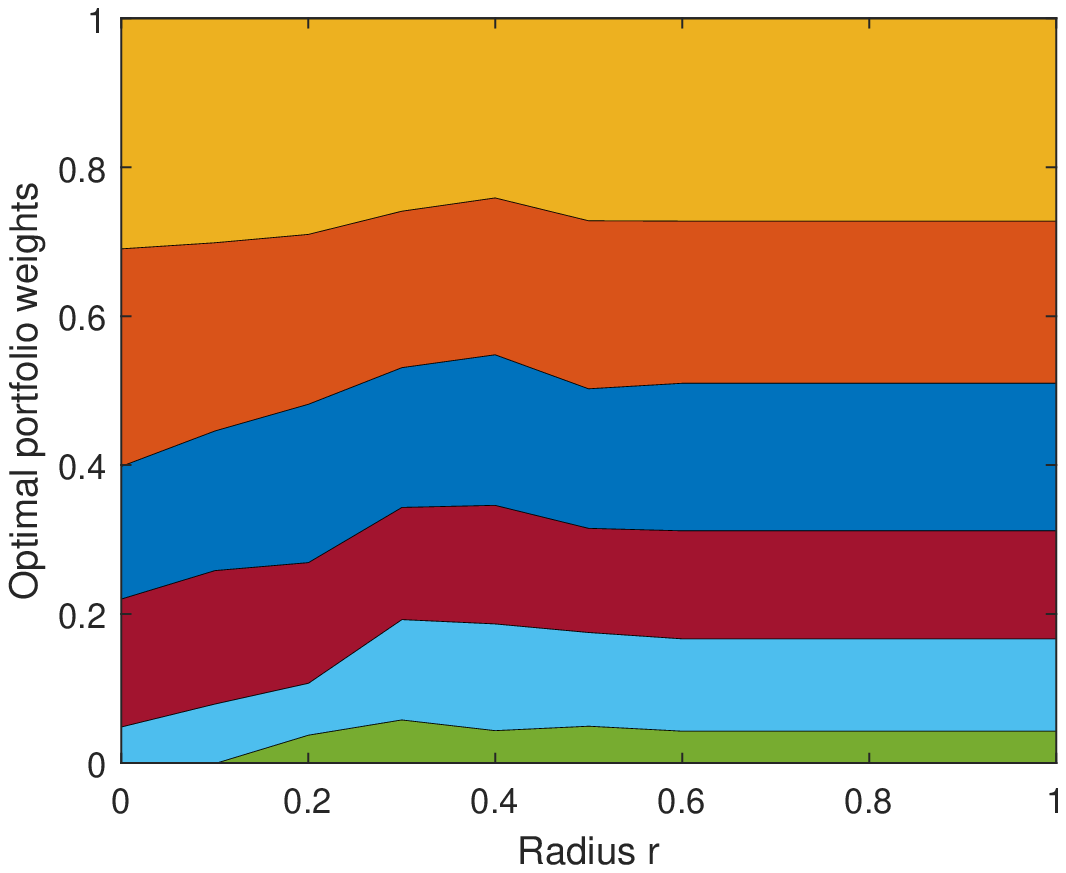}}
    \subfigure[]{
    \label{fig:Opt_portfolio_sample300}
    \includegraphics[scale=0.4]{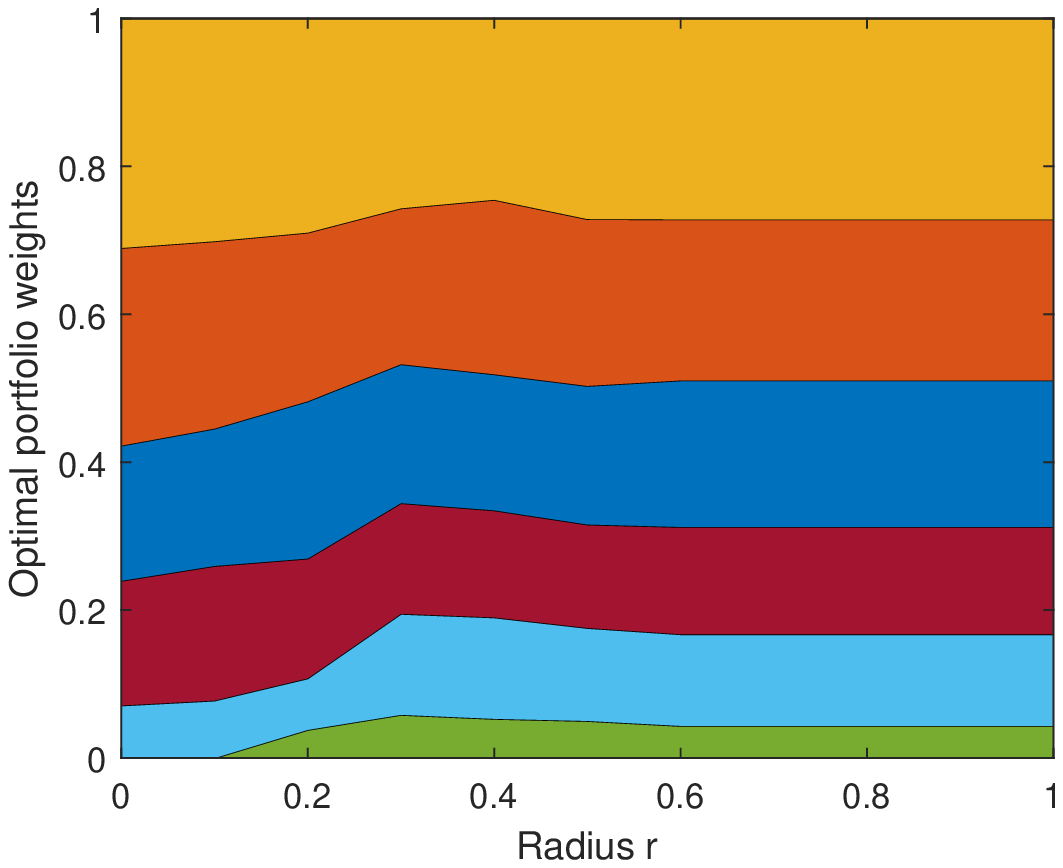}}
  \caption{{\color{black}Optimal portfolio composition as a function of the radius $r$ for $M=9$ breakpoints, $K=100$ sample sizes of $\xi$. (a) $N=10$ training samples. (b) $N=50 $ training samples. (c) $N=300$ training samples. The portfolio weights are visualized in ascending order, i.e., the top area corresponds to the most heavily weighted asset and the second from top corresponds to the second most heavily weighted asset, and so on.}}
  \label{fig:Opt_portfolio_sample}
\end{figure}
\begin{figure}[H]
  \centering
  \subfigure[]{
    \label{fig:Opt_value_sample10}
    \includegraphics[scale=0.4]{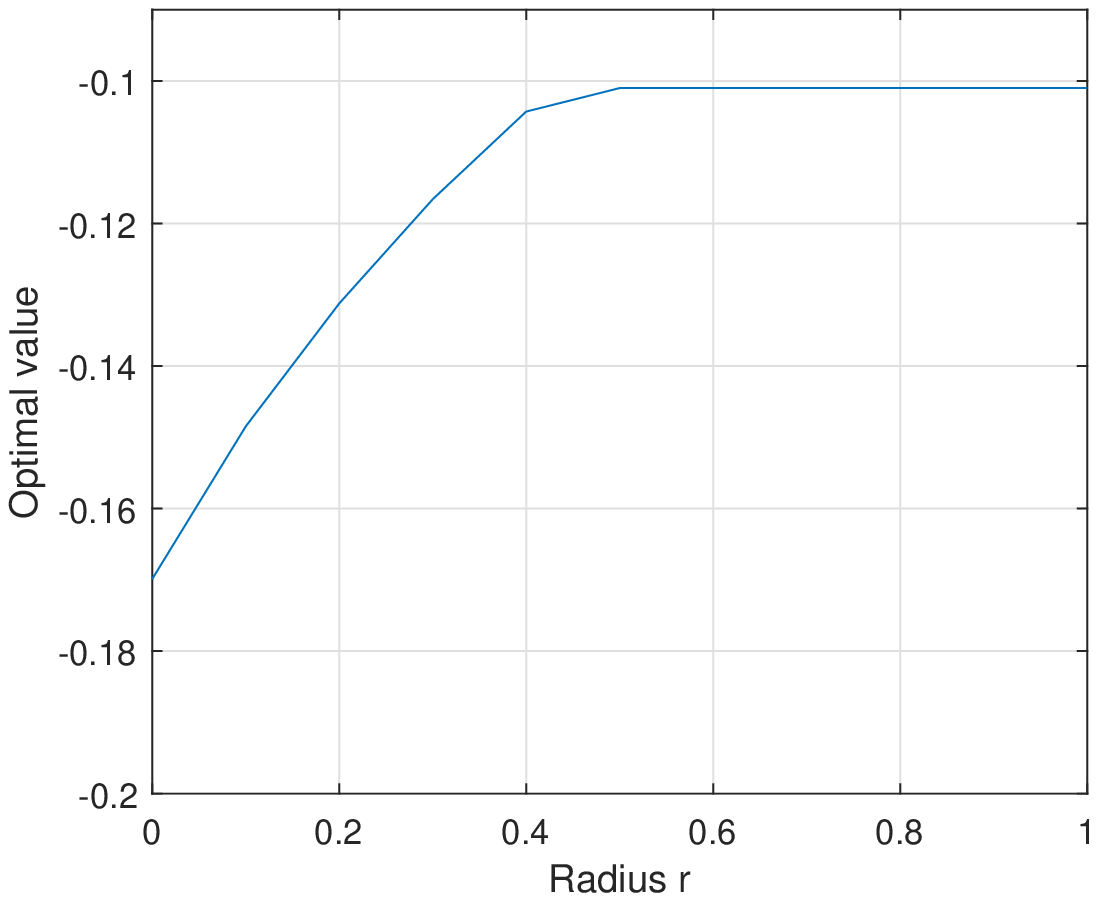}}
  \subfigure[]{
    \label{fig:Opt_value_sample50}
    \includegraphics[scale=0.4]{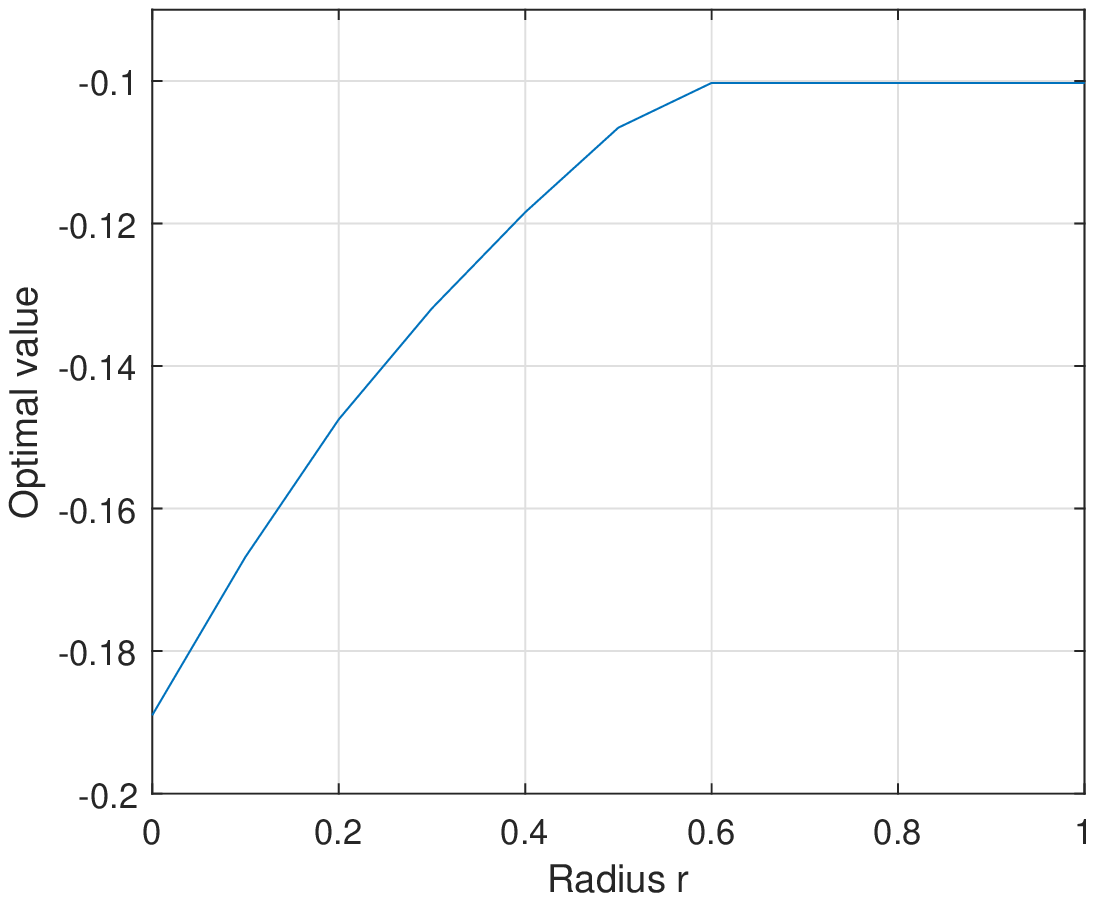}}
    \subfigure[]{
    \label{fig:Opt_value_sample300}
    \includegraphics[scale=0.4]{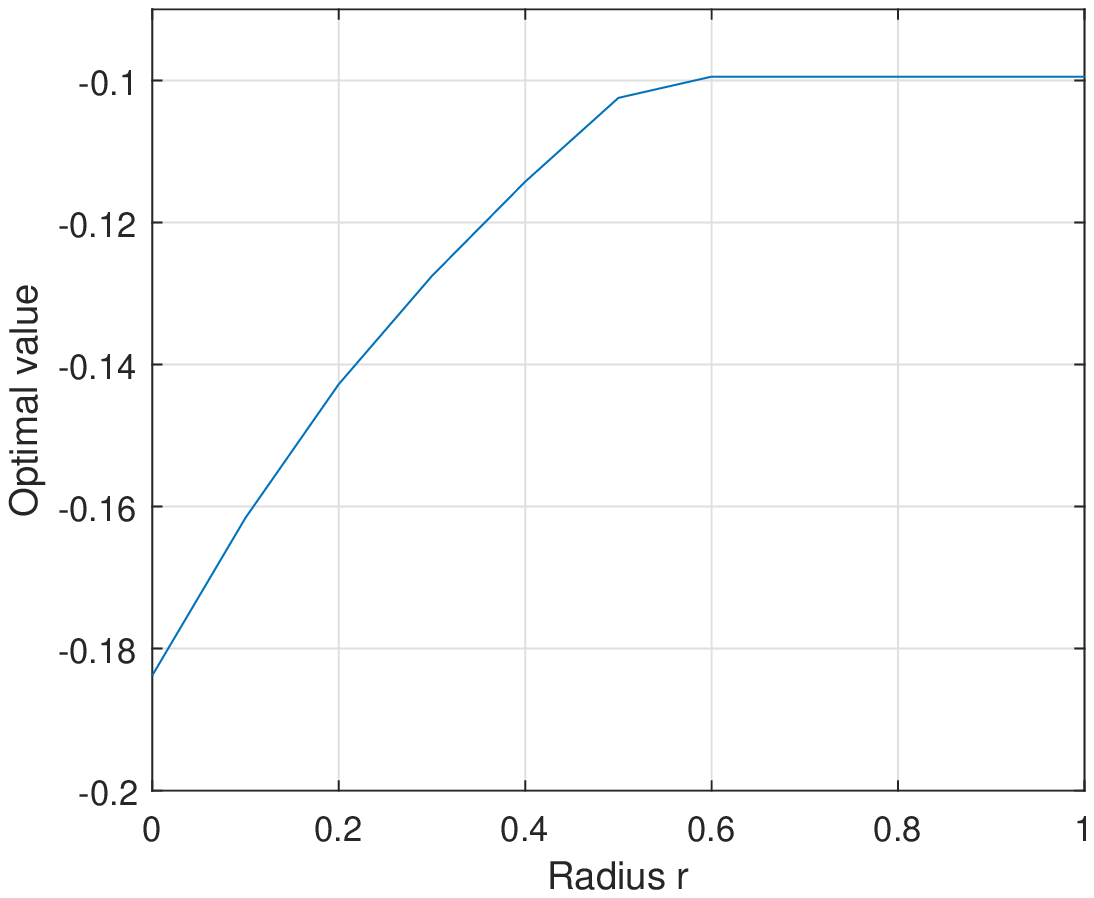}}
  \caption{{\color{black}Optimal value of (\ref{eq:testproblem-approx}) as a function of the radius $r$ for $M=9$ breakpoints, $K=100$ sample sizes of $\xi$. (a) $N=10$ training samples. (b) $N=50 $ training samples. (c) $N=300$ training samples.}}
  \label{fig:Opt_value_sample}
\end{figure}

Next, 
we investigate the impact on the optimal value 
and optimal portfolio weights 
when we use $\sigma_M(\cdot,s)$ to 
approximate $\sigma(\cdot,s)$ with increasing 
 number of breakpoints $M$.
We solve problem (\ref{eq:testproblem-approx}) via Algorithm \ref{alm:DRSRM} using training datasets of cardinalities $N=100$, $K=400$.
The radius of ambiguity set $r$ is set $0.2$, 
and number of breakpoints $M$ ranges over $\left\{1,3,7,9,15,19,24,39,49,\right.$
$\left.79,99,199,399\right\}$ 
such that $M+1$ is the divisor of $K$ and 
$t_i=\frac{i}{M+1}$ for $i\in[M_0]$. 
It is difficult to calculate the precise optimal value $\vartheta^*$  of (\ref{eq:testproblem})
since the inner maximization problem is infinite dimensional. In this experiment,
we use $N=300$ and $M=40000$ to solve the problem (\ref{eq:testproblem-approx}) and regard its optimal
value $\vartheta^*=-0.1015$ as the true optimal value of (\ref{eq:testproblem}).
Figures \ref{fig:Opt_portfolio_breakpoints} and \ref{fig:Opt_value_breakpoints} visualize the changes of the optimal portfolio weights $z$
and the optimal values as the number of breakpoints $M$ increases.
From Figure \ref{fig:Opt_value_breakpoints}, we can see that
the optimal value $\vartheta_M$ of (\ref{eq:testproblem-approx}) converges to $-0.1015$ when $M$ increases. 
This is because 
$\sigma_M(\cdot,s)$ 
converges to $\sigma(\cdot,s)$
as $M$ increases.

\begin{figure}[H]
  \centering
  \subfigure[]{
    \label{fig:Opt_portfolio_breakpoints}
    \includegraphics[scale=0.45]{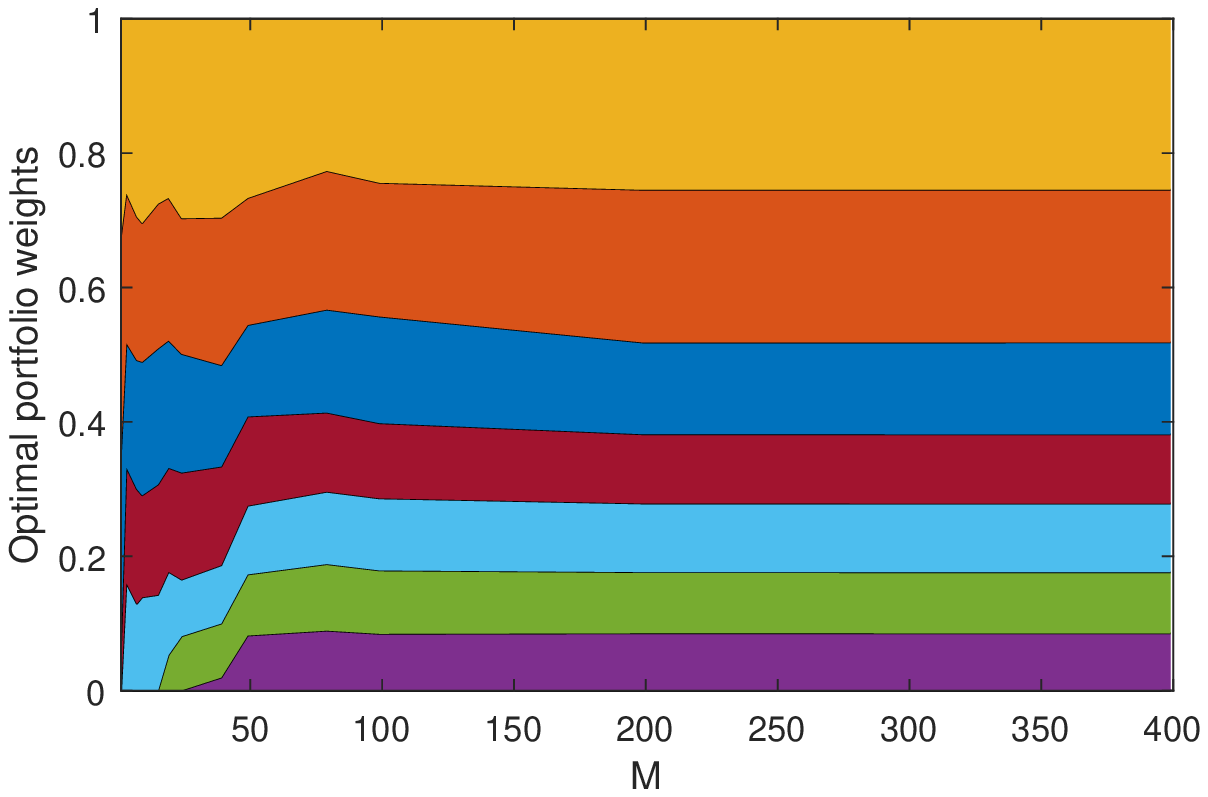}}
  \subfigure[]{
     \label{fig:Opt_value_breakpoints}
     \includegraphics[scale=0.45]{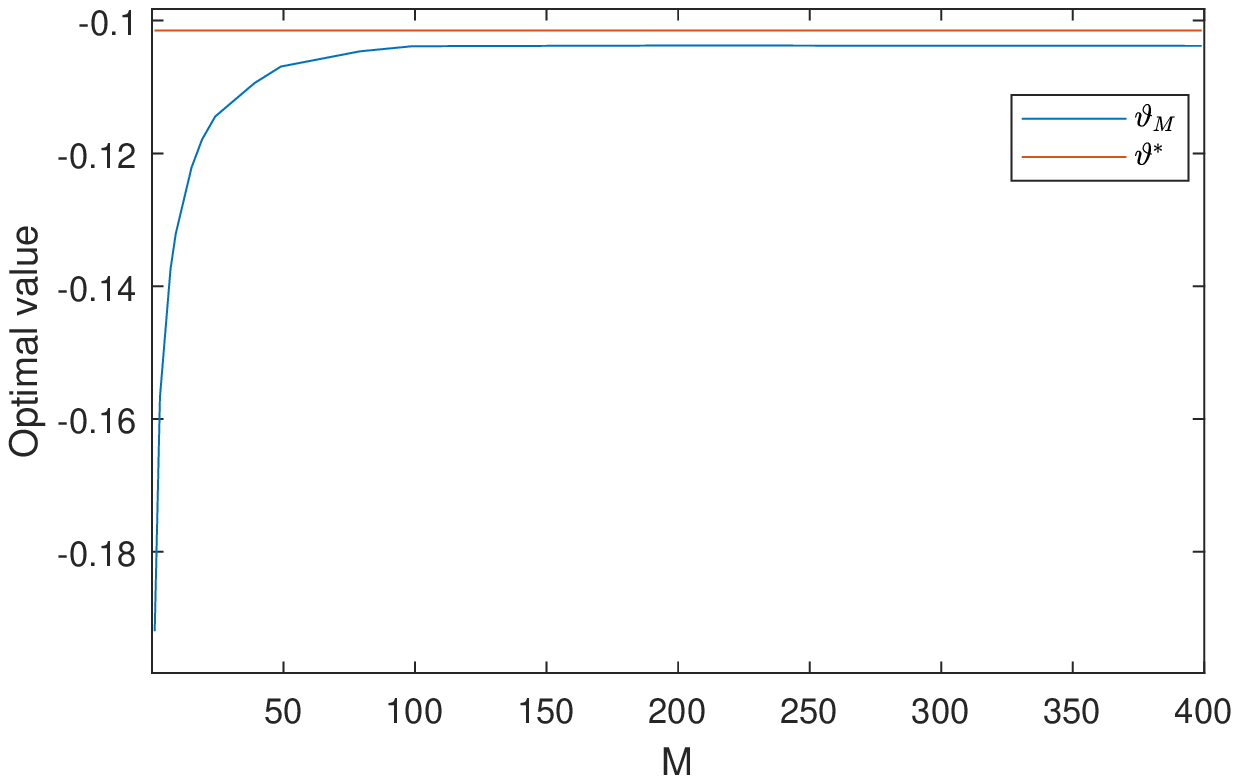}}
  \caption{{\color{black}(a) Optimal portfolio weights as $M$ increases over one simulation.
  (b) Optimal value as $M$ increases over one simulation.}}
  \label{fig:Opt_breakpoint}
\end{figure}
We have also  studied 
the impact of the sample size of $s$ on the optimal value and optimal portfolio weights. We solve problem (\ref{eq:testproblem-approx}) via Algorithm \ref{alm:DRSRM} using training datasets of cardinality $K=100$.
The radius of the ambiguity set 
is fixed with $r=0.1$,
and the number of breakpoints 
is also fixed at $M=99$.
The  sample size of $s$ varies with $N=10,50,100,200,300,500,1000$. 
Figures \ref{fig:Opt_portfolio_samples} and \ref{fig:Opt_value_boxplot} depict the convergence of the optimal portfolio weights $Z_N$ and the optimal value $\vartheta^N$ as the sample size $N$ increase. Figure \ref{fig:Opt_value_boxplot} implies that the  sequence of the optimal values $\vartheta^N$ converges.  

\begin{figure}[H]
  \centering
  \subfigure[]{
    \label{fig:Opt_portfolio_samples}
    \includegraphics[scale=0.45]{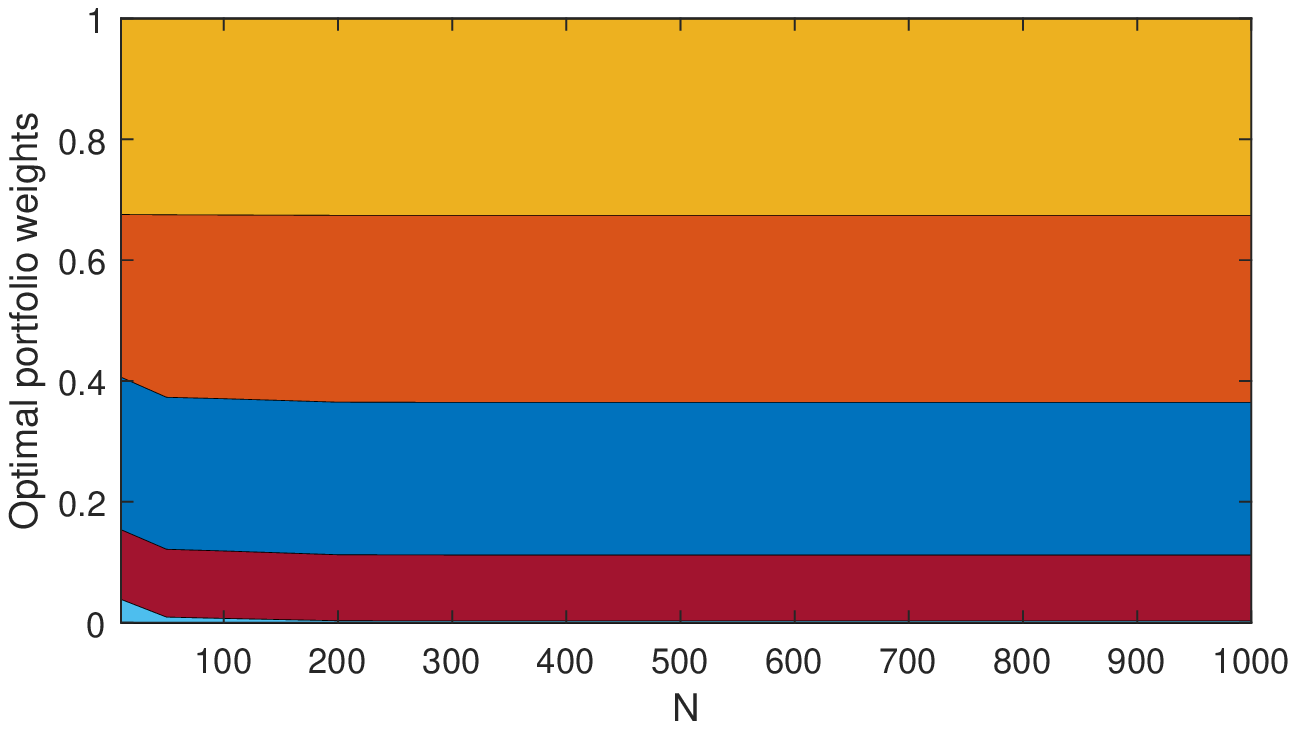}}
  \subfigure[]{
     \label{fig:Opt_value_boxplot}
     \includegraphics[scale=0.45]{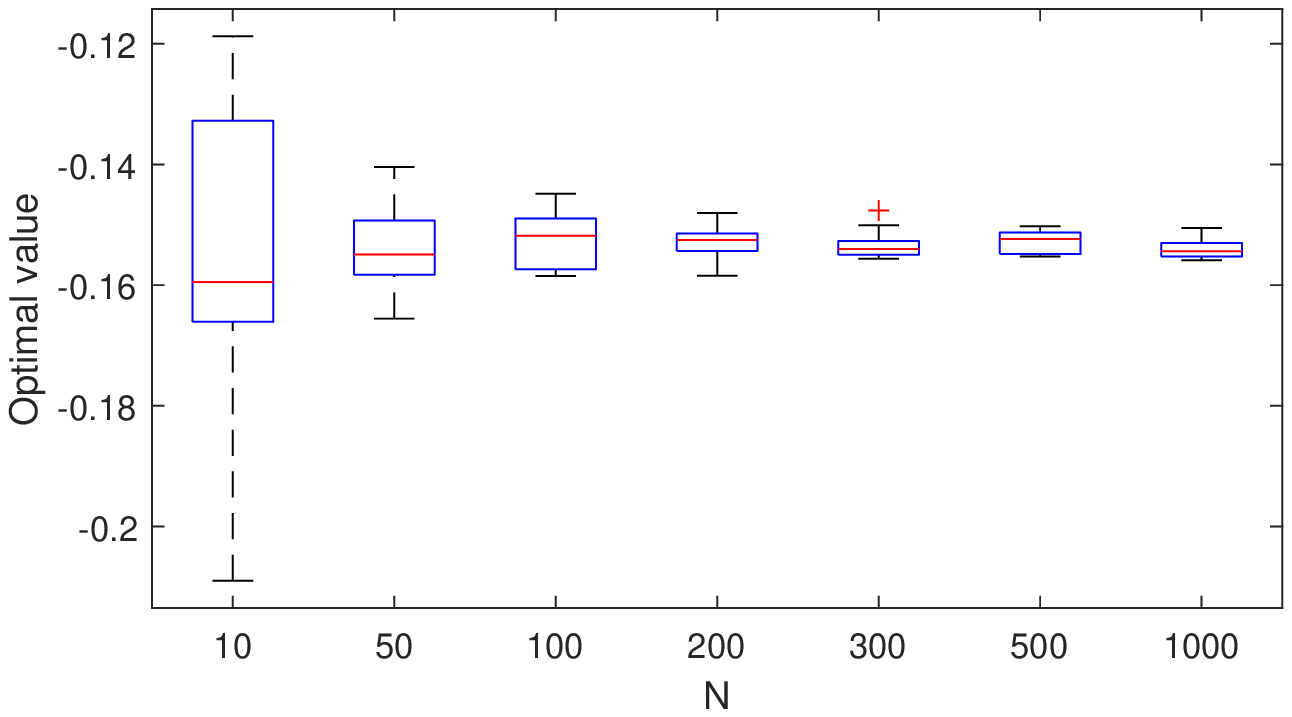}}
  \caption{{\color{black}(a) Optimal portfolio weights as $N$ increases averaged over 100 simulations.
  (b) Optimal value as $N$ increases over 100 simulations.}}
  \label{fig:two01}
\end{figure}
Finally, we investigate impact of the radius $r$ of the Kantorovich ball on the out-of-sample performance of model (\ref{eq:testproblem}).
Specifically, for each $r$,
we obtain 
an optimal solution $z_N(r)$  from solving problem (\ref{eq:testproblem}) and calculate  
check the change of 
ARSRM $\rho_{Q^*}(-z_N(r)^\top\xi)$ of  $-z_N(r)^\top\xi$
using the true distribution $Q^*$ and compare it with $\vt_N$.
Since $Q^*$ is unknown, we generate validation data of size 
$3000$ to evaluate $\rho_{Q^*}(-z_N(r)^\top\xi)$.
For each fixed $r$, we run 100 simulations with randomly generated training data of sample size $N=30$. 
 The blue shaded areas depict the tube of the optimal values $\vt_N$ between $20\%$ and $80\%$ quantiles. 
The solid black curve is
the sample mean. 
For each simulation, we examine whether inequality
$\rho_{Q^*}(-z_N(r)^\top\xi)\leq \vt_N$ holds or not,
and count once if it holds and zero otherwise.
The yellow depicts the percentages that the inequality 
holds out of 100 simulations when $r$ changes from $10^{-4}$ to $10^0$. According to \cite{ek18}, this is called reliability. To explain the idea, observe that
$\rho_{Q^*}(-z_N(r)^\top\xi)\geq \vt^*$ (where $\vt^*$ denotes the true ARSRM value)  and 
inequality $\rho_{Q^*}(-z_N(r)^\top\xi)\leq \vt_N$
implies that 
$\vt^*\leq \vt_N$. Thus, the more times the inequality holds, the more reliable that we may use 
$\vt_N$ as an upper bound of $\vt^*$.
The figure shows that when $r$ is increased to {\color{black} $10^{-1}$}, the reliability reaches $100\%$ which means that we do not need a large $r$ in this test.


\begin{figure}[H]
 \centering
\includegraphics[scale=0.5]{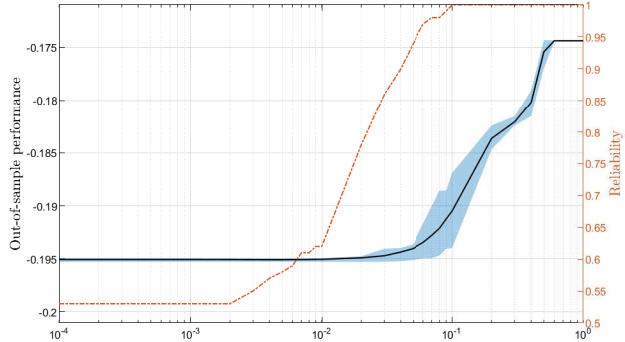}
\caption{Out of sample performance $\rho_{Q^*}(-z_N(r)^\top\xi)$ (left axis, solid line and shade area) and reliability
$\Prob(\rho_{Q^*}(-z_N(r)^\top\xi)\leq\vartheta_N)$ (right axis and dashed line) based on 100 independent simulations with $N = 30$
training samples.}
\label{fig:Out-of-sample}
\end{figure}

\subsection{Kusuoka's representation}

The second set of tests
concerns model (\ref{def:DR-RSRM-Opt}) with $\mathcal{M}$ being 
Kusouka's ambiguity set. In this case, the true 
risk spectrum takes a form of  
 $\sigma(t,\alpha)=\frac{1}{1-\alpha}\mathbbm{1}_{[\alpha,1]}(t)$ for $s\in (0,1)$.
To justify the application 
the model in this context,  we assume that
the portfolio manager's risk preference can be described by $\int_{0}^1\inmat{CVaR}_\alpha(\cdot)\mu(d\alpha)$ for some $\mu\in\mathscr{M}$
(see Assumption \ref{asm:Kasuoka-ambiguityset}). 
Consequently the minmax optimization model 
can be formulated as
\begin{equation*}
    \min_{z\in Z}\quad\sup_{\mu\in\mathcal{M}}\int_{0}^1\inmat{CVaR}_\alpha(-z^\top\xi)\mu(d\alpha).
\end{equation*}
Differing from the previous set of tests,
we 
use pairwise comparison approach to construct
the ambiguity set as outlined in
Section \ref{construction-Kasuoka's-set}.
Let $\mathcal{M}_{pair}$ denote the ambiguity set.
We consider the following program:
\begin{equation}
\label{eq:numerical_Kusuoka}
    \min_{z\in Z}\sup_{\mu\in\mathcal{M}_{pair}}\int_{0}^1\inmat{CVaR}_{\alpha}(-z^\top\xi)\mu(d\alpha),
\end{equation}
As discussed in Section \ref{subsec:Kusuoka's-representation}, we assume that $\alpha$ is discretely distributed  with finite support over $(0,1)$, i.e., $\mu(\alpha=\alpha_i)=m_i$ for $i\in[N]$. In addition, we assume that the true probability distribution of $\alpha$, denoted by $\mu^*$, satisfies $\mu^*(\alpha=\alpha_i)=m^*_i$ for $i\in[N]$.

\subsubsection{Design of Kusuoka's ambiguity set}
\label{subsec:designKusuoka-ambiguityset}

We  use
the {\em random
relative utility split scheme} (RRUS) 
considered by Armbruster and Delage \cite{ArD15}
to design the questions. 
Specifically, we ask the investor questions comparing a risky lottery  with two random outcomes and a
certain lottery with deterministic outcome, 
denoted respectively by
\begin{equation*}
Y_1 = \left\{\begin{aligned}
    y_1 \quad& \text{with \ probability $1-p$},\\
    y_3 \quad& \text{with \ probability \ $p$},
\end{aligned}\right.
\text{and} \ Y_2 = y_2.
\end{equation*}
Each question is described by four 
parameters $y_1<y_2<y_3$ and a probability $p$. Determine the
number of questions $J$. Below are the procedures.
\begin{itemize}
    \item [Step 0] Set $j=0$ and ${\cal M}^0:=\{\boldsymbol{m}\in\R_+^N:\sum_{i=1}^Nm_i=1\}$. For $j\leq J$, do Steps 1-3.
    \item [Step 1] Generate two random numbers denoted by $y_1^j$ and $y_3^j$ and assume for the convenience of exposition that $y_1^j<y_3^j$, let $p^j\in(0,1)$ be a positive number, which is randomly generated or designed. Define a lottery (a random variable $Y_1^j$) with 
    outcomes $y_1^j$ and $y_3^j$ 
    and respective probabilities $1 -p^j$ and $p^j$. 
    The risk value of $Y_1^j$ can be expressed as 
    \begin{equation*}
    \begin{split}
    \rho(Y_1^j)=&\sum_{i=1}^Nm_i\inmat{CVaR}_{\alpha_i}(Y_1^j)\\
    =&\sum_{i=1}^N m_i\left(\frac{y_1^j}{1-\alpha_i}\int_0^{1-p^j}\mathbbm{1}_{[\alpha_i,1]}(t)dt+\frac{y_3^j}{1-\alpha_i}\int_{1-p^j}^1\mathbbm{1}_{[\alpha_i,1]}(t)dt\right)\\
    =&\sum_{i=1}^N m_i\left(y_1^j\phi_i(p^j)+y_3^j\left(1-\phi_i(p^j)\right)\right),
    \end{split}
    \end{equation*}
    where $\phi_i(p)=\frac{1}{1-\alpha_i}\int_{0}^{1-p}\mathbbm{1}_{[\alpha_i,1]}(t)dt$ for $i\in[N]$.
    \item [Step 2] Calculate $$I_u:=\sup_{\boldsymbol{m}\in\mathcal{M}^j}\sum_{i=1}^N m_i\left(y_1^j\phi_i(p^j)+y_3^j\left(1-\phi_i(p^j)\right)\right)$$ and  $$I_l:=\inf_{\boldsymbol{m}\in\mathcal{M}^j}\sum_{i=1}^N m_i\left(y_1^j\phi_i(p^j)+y_3^j\left(1-\phi_i(p^j)\right)\right).$$
    Let $y_2^j=\frac{1}{2}(I_u+I_l)$.
    \item [Step 3] If $\sum_{i=1}^N m^*_i\left(y_1^j\phi_i(p^j)+y_3^j\left(1-\phi_i(p^j)\right)\right)\leq y_2^j$,
    then
    \begin{equation*}
        \mathcal{M}^{j+1}:=\mathcal{M}^{j}\cap\left\{\boldsymbol{m}\in\R_+^N:\sum_{i=1}^N m_i\left(y_1^j\phi_i(p^j)+y_3^j\left(1-\phi_i(p^j)\right)\right)\leq y_2^j\right\}.
    \end{equation*}
    Otherwise,
    \begin{equation*}
        \mathcal{M}^{j+1}:=\mathcal{M}^{j}\cap\left\{\boldsymbol{m}\in\R_+^N:\sum_{i=1}^N m_i\left(y_1^j\phi_i(p^j)+y_3^j\left(1-\phi_i(p^j)\right)\right)\geq y_2^j\right\}.  
    \end{equation*}
    Let $j:=j+1$. Go to Step 1.
 \end{itemize}
 In the $j$th iteration, Step 1 generates a lottery 
 with two 
 random outcomes $y_1^j$ and $y_3^j$ and 
 respective probabilities $1-p^j$ and $p^j$; 
 Step 2 provides an approach of choosing a certain outcome $y_2^j$, which can reduce the size of the ambiguity set efficiently when a new question is
added; Step 3 asks the DM to choose between the lottery and the
one with certain loss $y_2^j$. Here the true RM defined as $\rho^*(\cdot):=\sum_{i=1}^N m_i^*\inmat{CVaR}_{\alpha_i}(\cdot)$ is used to “act as the DM ”. After the DM makes a choice, a linear inequality is created and added to the ambiguity set
$\mathcal{M}^j$.
 
 \subsubsection{Tractable formulation of (\ref{eq:numerical_Kusuoka})}
 According to the 
 construction of the Kusuoka's ambiguity set
 outlined 
 in Section \ref{subsec:designKusuoka-ambiguityset}, 
 we can 
 write down 
 problem (\ref{eq:numerical_Kusuoka}) 
 as
 \begin{equation}
 \label{eq:num-kusuoka-tracfor}
     \begin{split}
         \min_{z\in Z}\quad&\sup_{\boldsymbol{m}\in\R_+^N} \sum_{i=1}^N m_i\inmat{CVaR}_{\alpha_i}(-z^\top\xi)\\
         \st\quad&\sum_{i=1}^N m_i = 1,\\
         &\sum_{i=1}^N m_i\left(y_1^j\phi_i(p^j)+y_3^j\left(1-\phi_i(p^j)\right)\right)\leq y_2^j, \ \forall\ j\in[J],
    \end{split}
 \end{equation}
and reformulate the latter via
 (\ref{def:CVaR-eq}) and Lagrange duality
as a single
linear programming problem
  \begin{alignat}{2}
  \label{eq:tractaleformulation-kusuoka}
          \min_{z\in Z,\eta,\boldsymbol{\lambda},\boldsymbol{\zeta}}\quad& \eta+\sum_{j=1}^J\lambda_jy_2^j \nonumber\\
          \st\quad&\boldsymbol{\lambda}\geq0,\\
          &\zeta_i+\frac{1}{1-\alpha_i}\mathbb{E}\left[(-z^\top\xi-\zeta_i)_+\right]\leq \eta+ \sum_{j=1}^J\lambda_j\left(y_1^j\phi_i(p^j)+y_3^j\left(1-\phi_i(p^j)\right)\right), \ \forall i\in[N],\nonumber
  \end{alignat}
where $\mathbb{E}[\cdot]$ denotes the expectation operator w.r.t.~the probability distribution of $\xi$.

 \subsubsection{Numerical results}
 In 
 this set of tests, 
 we pick up the elements of $\mathcal{A}=\{\alpha_1,\cdots,\alpha_N\}$ randomly over $(0,1)$, sort them in non-decreasing order, and 
 set $\mu^*(\cdot)=\frac{1}{N}\sum_{i=1}^N\delta_{\alpha_i}(\cdot)$. 
 By using 
 the relation
 $\sigma(t)=\int_{0}^t(1-\alpha)^{-1}\mu(d\alpha)$
 (see Section \ref{sec:introd}), and the right-continuity of risk spectrum $\sigma$, 
 we have
 \begin{equation}
 \label{relation-muandRS}
\sigma(t)=\sum_{i=0}^{N}\sum_{\ell=0}^{i}\frac{\mu(\alpha=\alpha_\ell)}{1-\alpha_\ell}\mathbbm{1}_{[\alpha_{i},\alpha_{i+1})}(t), 
 \end{equation}
  where $\alpha_0=0$ and $\alpha_{N+1}=1$.
  In this case,
  $\sigma(t)$ is a step-like function with breakpoints $\mathcal{A}$. 
  We derive the worst-case $\mu(\alpha=\alpha_i)$ for $i\in[N]$ by solving the optimization problem (\ref{eq:tractaleformulation-kusuoka}). 
  To 
  examine 
  the effectiveness 
 of the  pairwise comparison  
 approach outlined in 
 Section \ref{subsec:designKusuoka-ambiguityset},
 we use
 (\ref{relation-muandRS}) to visualize our 
 computational results
 by showing the corresponding step-like risk spectrum.
 
Figure \ref{fig:Numbet-OPT}
   depicts 
the change of
the optimal values of (\ref{eq:tractaleformulation-kusuoka}) 
as the number of questions increases.
The decresing 
trend is 
because the feasible set of inner maximization problem is reduced as the number of questionnaires increases. 
Figure \ref{fig:Numbet-PW} 
depicts 
the 
corresponding 
optimal portfolio weights 
as the number of questions 
increases from $5$ to $100$.


\begin{figure}[H]
  \centering
  \subfigure[]{
     \label{fig:Numbet-OPT}
     \includegraphics[scale=0.4]{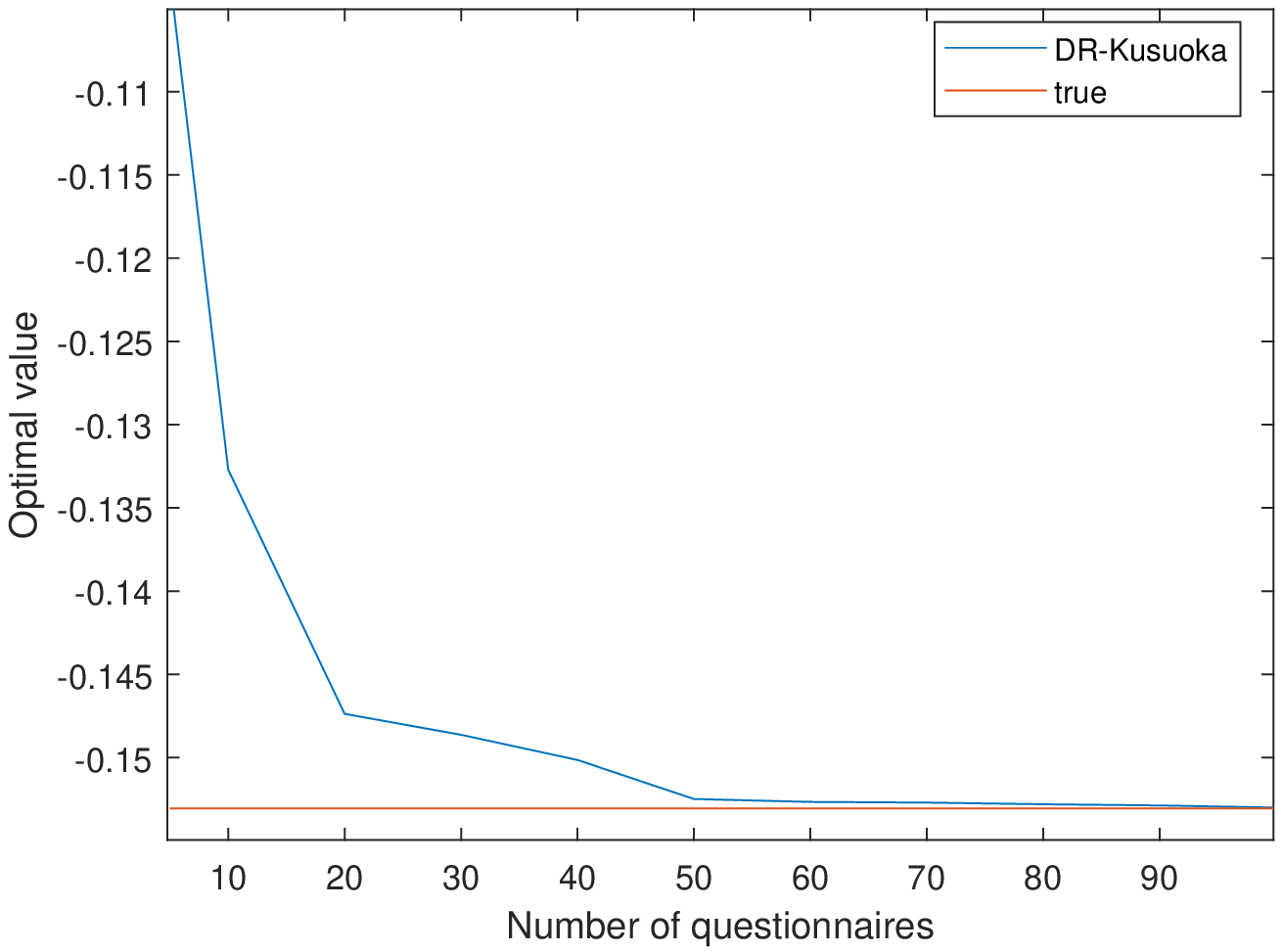}}
  \subfigure[]{
    \label{fig:Numbet-PW}
    \includegraphics[scale=0.4]{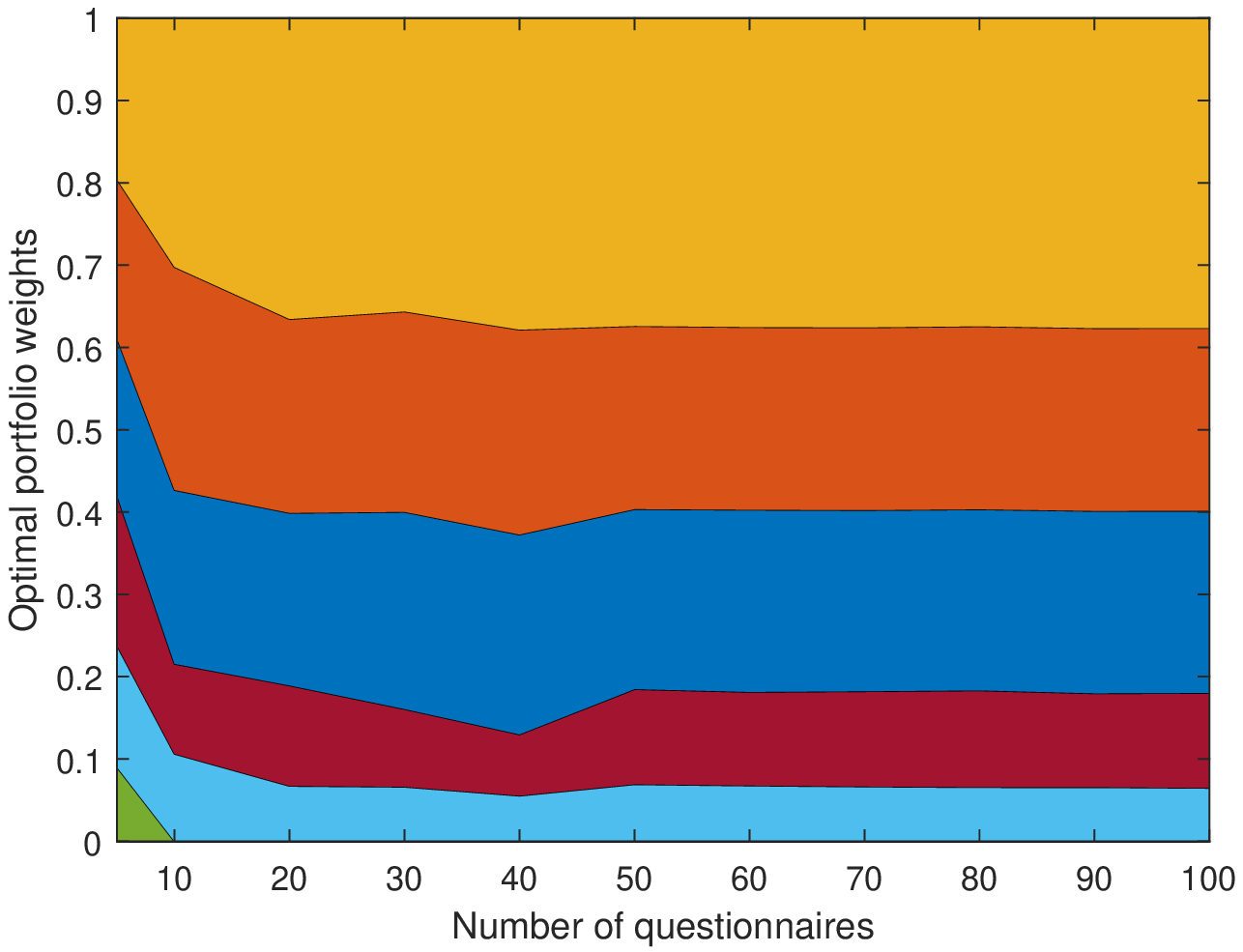}}
  \caption{The worst-case optimal value and portfolio weights after the number of questionnaires takes $J\in[5,10,20,30,40,50,60,70,80,90,100]$ in one simulation.}
  \label{fig:two}
\end{figure}
  Figure \ref{fig:True-PC-SLRS} depicts the changes of the worst-case risk spectrum constructed by solving problem (\ref{eq:tractaleformulation-kusuoka}) and using the relation (\ref{relation-muandRS}). 
  The result of Figure \ref{fig:True-PC-SLRS} shows that the worst-case step-like risk spectrum becomes more and more approximate to the true one as the number of questionnaire increases.

\begin{figure}[H]
 \centering
\includegraphics[scale=0.8]{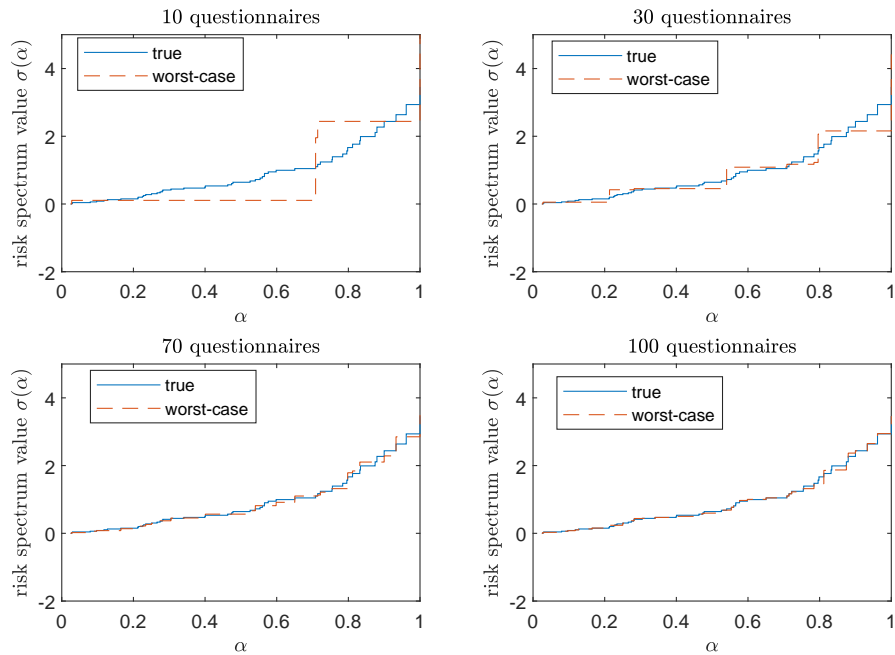}
\caption{The worst-case step-like risk spectrum after the number of questionnaires takes $J\in\{10,30,70,100\}$ in one simulation.}
\label{fig:True-PC-SLRS}
\end{figure}

\section{Extensions}

In the previous sections, we consider the case that the random risk spectra are step-like 
and the state variable is finitely distributed
to facilitate preference elicitation and numerical computation of the randomized SRM models. In practice, the risk spectrum of a DM's risk preference 
does not have to be step-like and also the state variable $s$ may be continuously distributed (which means the DM's risk preferences cannot be described by a finite number of SRM). 
If we apply the established models to the general cases,
where $\sigma(\cdot,s)$ is not step-like and $s$ is continuously distributed, then there will be inevitably modelling errors.  
In this section, we quantify the model errors so that we are guaranteed that the approximate models and computational   
schemes developed in the previous sections can be used within a prescribed precision.

\subsection{Static step-like approximation}

Let $Q^*$ be the true probability distribution of $s$. We consider the following expected SRM minimization problem:
\bgeqn\label{eq:ESRM-MIN}
\vartheta^* := \min_{z\in Z} \ \mathbb{E}_{Q^*}\left[\rho_{\sigma(\cdot,s)}\left(f(z,\xi)\right)\right],
\edeqn
where $\mathbb{E}_{Q^*}[\cdot]$ is the expectation value 
w.r.t.~$Q^*$, $\sigma(\cdot,s)\in\mathfrak{S}(S)\subseteq\mathscr{L}^q[0,1]$ and $q\in [1,\infty)$.
To ensure well-definiteness of (\ref{eq:ESRM-MIN}), we 
make the following assumption.

\begin{assumption}\label{asm:randomlossf}
Let $\mathscr{G}_f:=\{F_{f(z,\xi)}^\leftarrow(\cdot):z\in Z\}$. There exist a constant $p\in[1,\infty)$ with $\frac{1}{p}+\frac{1}{q}=1$ and a positive function $\psi_f\in\mathscr{L}^p[0,1]$ such that
$\sup_{g\in \mathscr{G}_f}|g(t)| \leq \psi_f(t)$
and $\mathbb{E}_Q\left[\int_0^1 \psi_f(t)\sigma(t,s)dt\right]$ $
<\infty$ for any $Q\in\mathscr{P}(S)$.
\end{assumption}
Without loss of generality, we assume that $\sigma(\cdot,s)$ is a general non-negative, non-decreasing function with the normalized property $\int_{0}^1\sigma(t,s)dt=1$ but it does not necessarily have a step-like structure. Let $\mathfrak{S}_M(S)
$ denote the set of all non-negative, non-decreasing, and normalized step-like functions over $[0,1]$ with breakpoints $\{t_1,\cdots,t_M\}$,
where $0<t_1<\cdots<t_M<1$ for all $s\in S$ and $S$ is a compact set. We 
consider 
{\color{black} the} step-like approximation of
$\sigma(\cdot,s)\in\mathfrak{S}(S)$.

\begin{definition}
Let $\sigma(\cdot,s)\in\mathfrak{S}(S)$. 
$\sigma_M(\cdot,s)$ is said to be a step-like approximation of $\sigma(\cdot,s)$ 
if $\sigma_M(t,s)=\sigma_i(s)$, for $t\in[t_i,t_{i+1})$, $i\in[M_0]$, such that $\sigma_i(s)\in[\sigma(t_i,s),\sigma(t_{i+1},s)]$, for $i\in[M_0]$, and $\int_{0}^1\sigma_M(t,s)=\sum_{i=0}^M\sigma_i(s)(t_{i+1}-t_i)=1$.
\end{definition}

We propose to obtain an approximate optimal value and optimal solution of problem (\ref{eq:ESRM-MIN}) by solving the following step-like approximate $\inmat{ARSRM}$ minimization problem:
\bgeqn\label{eq:Steplike-ESRM-MIN}
\vartheta_M^*:=\min_{z\in Z} \ \mathbb{E}_{Q^*}\left[\rho_{\sigma_M(\cdot,s)}(f(z,\xi))\right].
\edeqn
Since $\mathfrak{S}_M(S)\subset \mathfrak{S}(S)$, then 
$\mathbb{E}_{Q^*}\left[\rho_{\sigma_M(\cdot,s)}(f(z,\xi))\right]$ is well-defined under Assumption \ref{asm:randomlossf}.
Let $Z^*$ and $Z_M$ be the respective optimal solutions of problem (\ref{eq:ESRM-MIN}) and (\ref{eq:Steplike-ESRM-MIN}). Let
\begin{equation}
   \label{def:Delta}
\Delta_M := \max_{i\in[M_0]}\left|t_{i+1}-t_1\right|. 
\end{equation}
It is obvious that in order to ensure approximate validity, we need $\Delta_M$ to be sufficiently small, which is equivalent to setting $M$ a large value.

\begin{proposition}[Step-like approximation of random risk spectra]
\label{prop:steplikeapp-A}
Assume: (a)
for each fixed $s\in S$,
$\sigma(\cdot,s)$
is 
Lipschitz continuous over $[0,1]$
with modulus $L(s)$ 
where
$\mathbb{E}_{Q^*}[L(s)]<\infty$ and
the expectation is taken w.r.t.~the probability distribution of $s$,
(b) Assumption \ref{asm:randomlossf} holds and $\int_{0}^1\psi_f(t)dt<\infty$. 
Let $\sigma_M(\cdot,s)$ denote a step-like approximation of $\sigma(\cdot,s)$.
Then the following assertions hold.
\begin{itemize}
    \item [\inmat{($i$)}]
    Let 
    $\Delta_M$ be
    defined as  in (\ref{def:Delta}). Then
    \bgeqn
    |\vartheta^*-\vartheta_M^*|\leq \mathbb{E}_{Q^*}\left[L(s)\right]\Delta_M\int_{0}^1\psi_f(t)dt.
    \edeqn

    \item [\inmat{($ii$)}] Let $\{Z_M\}$ be a sequence of optimal solutions obtained form solving problem (\ref{eq:Steplike-ESRM-MIN}). 
    Then every cluster point of the sequence is an optimal solution of problem (\ref{eq:ESRM-MIN}), that is ,
    \bgeqn
    \lim_{M\rightarrow\infty}\mathbb{D}(Z_M,Z^*)=0,
    \edeqn
    where $\mathbb{D}(A,B)$ denotes the access distance of set $A$ over set $B$.
\end{itemize}
\end{proposition}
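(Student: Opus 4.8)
The plan is to prove part (i) by first controlling the approximation error of the random risk spectrum \emph{pointwise} in $(t,s)$, then lifting this to a uniform-in-$z$ bound on the two objective functions, and finally invoking the elementary fact that $|\min_z h-\min_z \tilde h|\le \sup_z|h(z)-\tilde h(z)|$. Part (ii) will then follow from this uniform convergence together with compactness of $Z$ and lower semicontinuity of the limiting objective, via a standard cluster-point argument.

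First I would establish the key spectral estimate. Fix $s\in S$ and $t\in[t_i,t_{i+1})$. By the definition of the step-like approximation, $\sigma_M(t,s)=\sigma_i(s)\in[\sigma(t_i,s),\sigma(t_{i+1},s)]$, and since $\sigma(\cdot,s)$ is non-decreasing, $\sigma(t,s)$ also lies in $[\sigma(t_i,s),\sigma(t_{i+1},s)]$. Hence both quantities sit in a common interval whose length, by the Lipschitz hypothesis (a), is at most $L(s)(t_{i+1}-t_i)\le L(s)\Delta_M$. This yields the clean bound
\[
|\sigma(t,s)-\sigma_M(t,s)|\le L(s)\Delta_M\qquad\inmat{for all }t\in[0,1],\ s\in S.
\]
I regard this as the technical heart of the proof: it is where both monotonicity and the Lipschitz modulus enter, and where the specific construction of $\sigma_M$ is used.

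Next I would pass to the risk measures. For fixed $z\in Z$ and $s\in S$, writing $g=F^\leftarrow_{f(z,\xi)}$ and using the domination $|g(t)|\le\psi_f(t)$ from Assumption \ref{asm:randomlossf},
\[
|\rho_{\sigma(\cdot,s)}(f(z,\xi))-\rho_{\sigma_M(\cdot,s)}(f(z,\xi))|
=\left|\int_0^1 g(t)\left(\sigma(t,s)-\sigma_M(t,s)\right)dt\right|
\le L(s)\Delta_M\int_0^1\psi_f(t)dt.
\]
Taking $\mathbb{E}_{Q^*}[\cdot]$ and using $\mathbb{E}_{Q^*}[L(s)]<\infty$ together with $\int_0^1\psi_f(t)dt<\infty$ from hypothesis (b), I obtain an estimate that is uniform in $z$, namely $\sup_{z\in Z}|h(z)-h_M(z)|\le\varepsilon_M$ where $\varepsilon_M:=\mathbb{E}_{Q^*}[L(s)]\,\Delta_M\int_0^1\psi_f(t)dt$ and $h,h_M$ denote the objectives of (\ref{eq:ESRM-MIN}) and (\ref{eq:Steplike-ESRM-MIN}). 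Applying the $|\min-\min|\le\sup|\cdot|$ inequality then gives $|\vartheta^*-\vartheta_M^*|\le\varepsilon_M$, which is exactly (i).

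For part (ii) the estimate above reads $\sup_{z\in Z}|h(z)-h_M(z)|\le\varepsilon_M\to 0$ as $\Delta_M\to 0$. Since $f(\cdot,\xi)$ is convex and $\rho_{\sigma(\cdot,s)}$ is monotone and convex, $h$ is convex and (by Assumption \ref{asm:randomlossf}) finite on $Z$, hence lower semicontinuous; $Z$ is compact. Let $\bar z$ be any cluster point of $\{Z_M\}$, say $Z_{M_k}\to\bar z$. For any minimizer $z^*$ of $h$,
\[
h(\bar z)\le\liminf_k h(Z_{M_k})
\le\liminf_k\left(h_{M_k}(Z_{M_k})+\varepsilon_{M_k}\right)
=\liminf_k\vartheta_{M_k}^*
\le\liminf_k\left(\vartheta^*+\varepsilon_{M_k}\right)=\vartheta^*,
\]
where the final inequality uses part (i). Thus $h(\bar z)=\vartheta^*$ and $\bar z\in Z^*$. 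If $\mathbb{D}(Z_M,Z^*)\not\to 0$, compactness would produce a subsequence of points in $Z_{M_k}$ bounded away from $Z^*$ yet converging to some cluster point, contradicting that every cluster point is optimal; hence $\lim_{M\to\infty}\mathbb{D}(Z_M,Z^*)=0$. The main obstacle here is the qualitative one of securing lower semicontinuity and finiteness of $h$ (both supplied by the convexity of $f$ and Assumption \ref{asm:randomlossf}); once these and compactness of $Z$ are in hand, the cluster-point argument is routine.
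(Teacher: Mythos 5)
Your proof of part (i) is correct and is essentially the paper's argument: both derive the pointwise bound $|\sigma(t,s)-\sigma_M(t,s)|\le L(s)\Delta_M$ from monotonicity of $\sigma(\cdot,s)$ plus the Lipschitz modulus, multiply by the quantile function dominated by $\psi_f$, integrate, take $\mathbb{E}_{Q^*}$, and finish with the inequality $|\inf_z h-\inf_z h_M|\le\sup_{z\in Z}|h(z)-h_M(z)|$. The only real divergence is part (ii): the paper dispatches it in one line by citing a stability result in parametric programming (\cite[Lemma 3.8]{lx13}), whereas you spell out the underlying cluster-point argument (uniform convergence of objectives, compactness of $Z$, semicontinuity of the limit objective). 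Your explicit argument is exactly what such a lemma encapsulates, so nothing is gained or lost in substance, though your version is self-contained. One small caveat: you justify lower semicontinuity of $h$ by ``convexity plus finiteness,'' but a finite convex function on a compact convex set can fail to be lsc at boundary points of that set (e.g.\ $h=0$ on $[0,1)$, $h(1)=1$ is convex but not lsc at $1$); some additional remark (finiteness of $h$ on a neighbourhood of $Z$, or the continuity in $z$ that the paper simply asserts in its proof of part (i)) is needed to make this airtight. Since the paper glosses the same point by asserting continuity without proof, this is a shared, minor gap rather than a defect specific to your argument.
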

\begin{proof}
Part (i). Under Assumption \ref{asm:randomlossf}, $\rho_{\sigma(\cdot,s)}(f(z,\xi))$ is finite-valued
for each fixed $z\in Z$. Under conditions (a) and (b),
 \begin{equation*}\label{eq:lip}
 \begin{split}
     \left|\rho_{\sigma(\cdot,s)}(f(z,\xi))-\rho_{\sigma_M(\cdot,s)}(f(z,\xi))\right|
     &=\left|\sum_{i=0}^M\int_{t_i}^{t_{i+1}}F_{f(z,\xi)}^\leftarrow(\sigma(t,s)-\sigma_M(t,s))dt\right|\\
     &\leq\sum_{i=0}^M\int_{t_i}^{t_{i+1}}F_{f(z,\xi)}^\leftarrow|\sigma(t_{i+1},s)-\sigma(t_i,s)|dt\\
     &\leq\sum_{i=0}^ML(s)|t_{i+1}-t_i|\int_{t_i}^{t_{i+1}}\left|F_{f(z,\xi)}^\leftarrow(t)\right|dt\\
     &\leq L(s)\Delta_M\int_{0}^1\psi_f(t)dt<\infty,
 \end{split}
 \end{equation*}
 where the first inequality follows from the definition of $\sigma_M(t,s)$ and the fact that $\sigma(t,s)$ is non-decreasing w.r.t.~$t$. Moreover,
 under Assumption \ref{asm:randomlossf}, $\mathbb{E}_{Q^*}[\rho_{\sigma(\cdot,s)}(f(z,\xi))]$ is finite-valued.
 The discussions above imply that
\begin{equation*}
    \left|\mathbb{E}_{Q^*}[\rho_{\sigma(\cdot,s)}(f(z,\xi))]-\mathbb{E}_{Q^*}[\rho_{\sigma_M(\cdot,s)}(f(z,\xi))]\right|\leq  \mathbb{E}_{Q^*}[L(s)]\Delta_M\int_{0}^1\psi_f(t)dt.
\end{equation*}
Furthermore, since 
$Z$ is compact and $\rho_{\sigma(\cdot,s)}(f(z,\xi))$ is continuous in $z$ for all $\sigma(\cdot,s)\in\mathfrak{S}(S)$, then
\begin{equation*}
\begin{split}
|\vartheta^*-\vartheta_M^*|&\leq \sup_{z\in Z}\left|\mathbb{E}_{Q^*}[\rho_{\sigma(\cdot,s)}(f(z,\xi))]-\mathbb{E}_{Q^*}[\rho_{\sigma_M(\cdot,s)}(f(z,\xi))]\right|\\
&\leq  \mathbb{E}_{Q^*}[L(s)]\Delta_M\int_{0}^1\psi_f(t)dt.
\end{split}
\end{equation*}
Part (ii) follows directly from Part (i) and the well-known stability results in parametric programming \cite[Lemma 3.8]{lx13}. \hfill $\Box$
\end{proof}

Note that the conclusion of Proposition \ref{prop:steplikeapp-A} depends heavily on the assumption that $\sigma(\cdot,s)$ is Lipschitz continuous, which excludes many unbounded risk spectra such as Wang's
risk spectrum. In the case when $\sigma(\cdot,s)$ is not Lipschtz
continuous, one may set the breakpoints in a specific
way such that the following assumption is satisfied, we refer readers to \cite[Section 4.1]{wx20} for a
thorough discussion on this.


\subsection{Discretization of the state variable}

The computational schemes discussed in Section 6 rely  on the discrete distribution
of $s$. 
In practice,
the true probability distribution of $s$ may be unknown, but 
by can be estimated 
using 
sample data. Let $s^1,\cdots,s^N$ denote  i.i.d random sampling of $s$ and 
 $   Q_N=\frac{1}{N}\sum_{i=1}^N\delta_{s^i}(\cdot)
 $,
where $\delta_{s^i}(\cdot)$ denotes the Dirac probability measure at $s^i$.
We propose to approximate the true mean value of $\rho_{\sigma(\cdot,s)}(f(z,\xi))$, i.e., $\mathbb{E}_{Q^*}[\rho_{\sigma(\cdot,s)}(f(z,\xi))]$, using the sample average 
\begin{equation*}
    \mathbb{E}_{Q_N}\left[\rho_{\sigma_M(\cdot,s)}(f(z,\xi))\right]=\frac{1}{N}\sum_{i=1}^N\rho_{\sigma_M(\cdot,s^i)}(f(z,\xi)).
\end{equation*}
The next proposition gives a quantification of such an approximation.
\begin{proposition}[Sample average approximation of 
$s$]
\label{pro:7-2}
Assume  the settings and conditions of Proposition \ref{prop:steplikeapp-A}. For 
any $\varepsilon>0$ and $\theta>0$, let $N_0 :=-\frac{\ln\theta}{\Upsilon(\varepsilon)}$. Then
\bgeqn
\Prob\left(\min_{z\in Z}\left|\mathbb{E}_{Q_N}\left[\rho_{\sigma_M(\cdot,s)}(f(z,\xi))\right]-\mathbb{E}_{Q^*}\left[\rho_{\sigma(,s)}(f(z,\xi))\right]\right|\geq\varepsilon\right) \leq\theta
\edeqn
for all $N\geq N_0$ and $\Delta_M\leq\frac{\varepsilon}{2\mathbb{E}[L(s)]\int_0^1\psi_f(t)dt}$, where $\mathbb{E}[\cdot]$ is the expectation w.r.t.~the true distribution $Q^*$ of $s$.
\end{proposition}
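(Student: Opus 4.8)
The plan is to decompose the total error into a deterministic \emph{bias}, produced by replacing the risk spectrum $\sigma(\cdot,s)$ by its step-like surrogate $\sigma_M(\cdot,s)$, and a \emph{statistical error}, produced by replacing $Q^*$ by the empirical measure $Q_N$, and then to bound each by $\varepsilon/2$. Writing $h(z,s):=\rho_{\sigma(\cdot,s)}(f(z,\xi))$ and $h_M(z,s):=\rho_{\sigma_M(\cdot,s)}(f(z,\xi))$, for each fixed $z\in Z$ the triangle inequality gives
\begin{equation*}
\left|\mathbb{E}_{Q_N}[h_M(z,s)]-\mathbb{E}_{Q^*}[h(z,s)]\right|
\le
\left|\mathbb{E}_{Q_N}[h_M(z,s)]-\mathbb{E}_{Q^*}[h_M(z,s)]\right|
+\left|\mathbb{E}_{Q^*}[h_M(z,s)]-\mathbb{E}_{Q^*}[h(z,s)]\right| .
\end{equation*}
It therefore suffices to control the sampling term (first) and the approximation term (second) separately. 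Since the final statement takes $\min_{z\in Z}$ of the left-hand side, any bound holding uniformly in $z$ is more than enough, so I would aim for a uniform estimate and then specialise.

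The second (bias) term is purely deterministic and is precisely what Proposition \ref{prop:steplikeapp-A}(i) delivers: its proof already establishes, pointwise in $z$, that
\begin{equation*}
\left|\mathbb{E}_{Q^*}[h_M(z,s)]-\mathbb{E}_{Q^*}[h(z,s)]\right|
\le \mathbb{E}_{Q^*}[L(s)]\,\Delta_M\int_0^1\psi_f(t)\,dt .
\end{equation*}
Imposing the hypothesis $\Delta_M\le \varepsilon/\bigl(2\,\mathbb{E}[L(s)]\int_0^1\psi_f(t)\,dt\bigr)$ forces this quantity to be at most $\varepsilon/2$ for every $z\in Z$ simultaneously, with no reference to the random sample.

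For the first (statistical) term I would apply a large-deviation estimate to the i.i.d.\ sequence $h_M(z,s^1),\dots,h_M(z,s^N)$. Under Assumption \ref{asm:randomlossf} one has $|h_M(z,s)|\le\int_0^1\psi_f(t)\sigma_M(t,s)\,dt$, which is $Q^*$-integrable; assuming, as is implicit in seeking an exponential rate, that $h_M(z,s)$ has a finite moment generating function near the origin, Cramér's theorem applies to the centred sum. I would then \emph{define} $\Upsilon(\varepsilon)$ to be the resulting rate function at level $\varepsilon/2$ (taken as an infimum over $z\in Z$ to secure uniformity), so that
\begin{equation*}
\Prob\!\left(\left|\mathbb{E}_{Q_N}[h_M(z,s)]-\mathbb{E}_{Q^*}[h_M(z,s)]\right|\ge \tfrac{\varepsilon}{2}\right)\le e^{-N\Upsilon(\varepsilon)} .
\end{equation*}
Choosing $N\ge N_0=-\ln\theta/\Upsilon(\varepsilon)$ makes the right-hand side at most $\theta$, and combining with the $\varepsilon/2$ bias bound through the displayed triangle inequality yields the claim.

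The main obstacle is upgrading the exponential bound from a single $z$ to the whole compact decision set $Z$. A clean route is to exploit the continuity of $z\mapsto h_M(z,s)$ — inherited from the convexity/continuity of $f$ in $z$ and the fixed step structure of $\sigma_M$ — together with compactness of $Z$ to cover $Z$ by finitely many $\delta$-balls, apply the pointwise large-deviation bound at the centres through a union bound, and absorb the discretization residual into the $\varepsilon/2$ budget; the rate $\Upsilon(\varepsilon)$ is then the infimum of the centrewise rates, which stays strictly positive because $S$ is compact and $\mathbb{E}_{Q^*}[L(s)]<\infty$. This uniformization step, and the verification of the moment generating function condition needed for an exponential (rather than merely polynomial) rate, are exactly where the integrability in Assumption \ref{asm:randomlossf} and the compactness of $Z$ and $S$ are indispensable.
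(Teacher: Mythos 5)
Your decomposition and your treatment of the bias term coincide with the paper's proof: the same triangle inequality splits the error into a sampling part and an approximation part, and Proposition \ref{prop:steplikeapp-A}(i) together with the hypothesis on $\Delta_M$ bounds the approximation part by $\varepsilon/2$ uniformly in $z$. The genuine difference is in the statistical term. You apply Cram\'er's theorem to the scalar sequence $h_M(z,s^1),\dots,h_M(z,s^N)$ at each fixed $z$ and then propose to uniformize over the compact set $Z$ by a covering and union-bound argument. The paper avoids uniformization altogether by exploiting the step structure: since the breakpoints of $\sigma_M(\cdot,s)$ depend on neither $s$ nor $z$,
\begin{equation*}
\mathbb{E}_{Q_N}\left[\rho_{\sigma_M(\cdot,s)}(f(z,\xi))\right]-\mathbb{E}_{Q^*}\left[\rho_{\sigma_M(\cdot,s)}(f(z,\xi))\right]
=\sum_{i=0}^{M}\left(\mathbb{E}_{Q_N}[\sigma_i(s)]-\mathbb{E}_{Q^*}[\sigma_i(s)]\right)\int_{t_i}^{t_{i+1}}F^{\leftarrow}_{f(z,\xi)}(t)\,dt,
\end{equation*}
so all randomness is carried by the $z$-free, $(M+1)$-dimensional vector $\mathbb{E}_{Q_N}[\boldsymbol{\sigma(s)}]-\mathbb{E}_{Q^*}[\boldsymbol{\sigma(s)}]$, while the $z$-dependence enters only through deterministic coefficients whose absolute sum is dominated by $\int_0^1\psi_f(t)\,dt$ by Assumption \ref{asm:randomlossf}; a single application of Cram\'er's theorem to this vector (in the sup-norm) then gives a bound uniform in $z$ for free, with a rate $\Upsilon(\varepsilon)$ independent of $z$. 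What the paper's route buys is precisely what your route still owes: to complete your covering step you would need a modulus of continuity of $h_M(\cdot,s)$ on $Z$ that is uniform in $s$, or at least has an exponentially integrable envelope --- note the natural modulus scales with $\max_{i}\sigma_i(s)$, which is not bounded over $S$ under the paper's assumption $\mathbb{E}_{Q^*}[L(s)]<\infty$ --- and you would also need to control the discretization residual, which multiplies the random quantity $\mathbb{E}_{Q_N}$ of that envelope and hence calls for a further concentration estimate. This is standard SAA uniform-exponential-convergence machinery, so your plan can be made rigorous, but only at the cost of extra hypotheses and length that the paper's structural identity renders unnecessary. Both arguments share the implicit assumption that the relevant moment generating functions are finite near the origin, which you correctly flag; and your remark that the $\min_{z\in Z}$ in the statement makes a uniform bound ``more than enough'' is right --- a pointwise bound at any single fixed $z$ would already suffice --- though, like the paper, you prove the stronger uniform version.
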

\begin{proof}
By the triangle inequality, we have
\begin{equation*}
\begin{split}
  &\left|\mathbb{E}_{Q_N}[\rho_{\sigma_M(\cdot,s)}(f(z,\xi))]-\mathbb{E}_{Q^*}[\rho_{\sigma(\cdot,s)}(f(z,\xi))]\right|\\
 & \leq|\mathbb{E}_{Q_N}[\rho_{\sigma_M(\cdot,s)}(f(z,\xi))]-\mathbb{E}_{Q^*}[\rho_{\sigma_M(\cdot,s)}(f(z,\xi))]|\\
& \ \ \ +|\mathbb{E}_{Q^*}[\rho_{\sigma_M(\cdot,s)}(f(z,\xi))]-\mathbb{E}_{Q^*}[\rho_{\sigma(\cdot,s)}(f(z,\xi))].|  
\end{split}
\end{equation*}
By Proposition \ref{prop:steplikeapp-A}, we have
\begin{equation}
\label{eq:7.16}
\left|\mathbb{E}_{Q^*}[\rho_{\sigma(\cdot,s)}(f(z,\xi))]-\mathbb{E}_{Q^*}[\rho_{\sigma_M(\cdot,s)}(f(z,\xi))]\right|\leq  \mathbb{E}_{Q^*}[L(s)]\Delta_M\int_{0}^1\psi_f(t)dt.
\end{equation}
Let $\Delta_M$ be sufficiently small such that $\mathbb{E}_{Q^*}[L(s)]\Delta_M\int_{0}^1\psi_f(t)dt\leq\frac{\varepsilon}{2}$, i.e., $\Delta_M\leq\frac{\varepsilon}{2\mathbb{E}_{Q^*}[L(s)]\int_{0}^1\psi_f(t)dt}$.
By (\ref{eq:7.16})
\begin{equation}\label{eq:steplike-appC}
    \begin{split}
        &\min_{z\in Z}\left|\mathbb{E}_{Q_N}[\rho_{\sigma_M(\cdot,s)}(f(z,\xi))]-\mathbb{E}_{Q^*}[\rho_{\sigma(,s)}(f(z,\xi))]\right|\\
        \leq&\sup_{z\in Z}\left|\mathbb{E}_{Q_N}[\rho_{\sigma_M(\cdot,s)}(f(z,\xi))]-\mathbb{E}_{Q^*}[\rho_{\sigma_M(\cdot,s)}(f(z,\xi))]\right| \\
        & \ +\sup_{z\in Z}\left|\mathbb{E}_{Q^*}[\rho_{\sigma_M(\cdot,s)}(f(z,\xi))]-\mathbb{E}_{Q^*}[\rho_{\sigma(\cdot,s)}(f(z,\xi))]\right|\\
         \leq& \sup_{z\in Z}\left|\mathbb{E}_{Q_N}[\rho_{\sigma_M(\cdot,s)}(f(z,\xi))]-\mathbb{E}_{Q^*}[\rho_{\sigma_M(\cdot,s)}(f(z,\xi))]\right|+\frac{\varepsilon}{2}.
    \end{split}
\end{equation}
Moreover, 
\begin{equation}\label{eq:steplike-appD}
    \begin{split}
      &\left|\mathbb{E}_{Q_N}[\rho_{\sigma_M(\cdot,s}(f(z,\xi))]-\mathbb{E}_{Q^*}[\rho_{\sigma_M(\cdot,s)}(f(z,\xi))]\right|\\
      =&\left|\frac{1}{N}\sum_{j=1}^N\left[\sum_{i=0}^M\int_{t_i}^{t_{i+1}}\sigma_i(s^j)F_{f(z,\xi)}^\leftarrow(t)dt\right]-\mathbb{E}_{Q^*}\left[\sum_{i=0}^M\sigma_i(s)\int_{t_i}^{t_{i+1}}F_{f(z,\xi)}^\leftarrow(t)dt\right]\right|\\
      =&\left|\sum_{i=0}^M\left(\mathbb{E}_{Q_N}[\sigma_i(s)]-\mathbb{E}_{Q^*}[\sigma_i(s)]\right)\int_{t_i}^{t_{i+1}}F^\leftarrow_{f(z,\xi)}(t)dt\right|\\
      \leq&\max_{i\in[M_0]}\left|\mathbb{E}_{Q_N}[\sigma_i(s)]-\mathbb{E}_{Q^*}[\sigma_i(s)]\right|\int_{0}^1\left|F_{f(z,\xi)}^\leftarrow(t)\right|dt\\
      \leq&\|\mathbb{E}_{Q_N}\left[\boldsymbol{\sigma(s)}]-\mathbb{E}_{Q^*}[\boldsymbol{\sigma(s)}\right]\|_{\infty}\int_{0}^1\psi_f(t)dt,\\
    \end{split}
\end{equation}
where
$\boldsymbol{\sigma(s)}=(\sigma_0(s),\sigma_1(s),\cdots,\sigma_M(s))^\top$ and $\|\cdot\|_\infty$ denotes the infinity norm in $\R^{M+1}$.
Combining (\ref{eq:steplike-appC}) and (\ref{eq:steplike-appD}), we obtain
\begin{equation*}
    \begin{split}
     &\Prob\left(\min_{z\in Z}|\mathbb{E}_{Q_N}\rho_{\sigma_M(\cdot,s)}(f(z,\xi))-\mathbb{E}_{Q^*}[\rho_{\sigma(,s)}(f(z,\xi))]|\geq \varepsilon\right)\\
     \leq&\Prob\left(\|\mathbb{E}_{Q_N}[\boldsymbol{\sigma(s)}]-\mathbb{E}_{Q^*}[\boldsymbol{\sigma(s)}]\|_{\infty}\int_{0}^1\psi_f(t)dt\geq\frac{\varepsilon}{2}\right).
    \end{split}
\end{equation*}
Recall that the condition (b) of Proposition \ref{prop:steplikeapp-A}
ensures that $\int_{0}^1\psi_f(t)dt<\infty$. Thus by Cram$\acute{e}$r's large deviation theorem, there exists a positive integer $N_0$ and positive constant $\Upsilon(\varepsilon)$ (depending on $\varepsilon$ with $\Upsilon(0)=0$) such that for all $N\geq N_0$
\begin{equation*}
   \Prob\left(\|\mathbb{E}_{Q_N}[\boldsymbol{\sigma(s)}]-\mathbb{E}_{Q^*}[\boldsymbol{\sigma(s)}]\|_{\infty}\int_{0}^1\psi_f(t)dt\geq\frac{\varepsilon}{2}\right)\leq e^{-\Upsilon(\varepsilon)N}. 
\end{equation*}
For fixed $\theta\in(0,1)$, we let $N_0(\varepsilon,\theta):=-\frac{\ln\theta}{\Upsilon(\varepsilon)}$ and subsequently obtain
\begin{equation*}
   \Prob\left(\|\mathbb{E}_{Q_N}[\boldsymbol{\sigma(s)}]-\mathbb{E}_{Q^*}[\boldsymbol{\sigma(s)}]\|_{\infty}\int_{0}^1\psi_f(t)dt\geq\frac{\varepsilon}{2}\right)\leq \theta 
\end{equation*}
for all $N\geq N_0(\varepsilon,\theta)$. \hfill $\Box$
\end{proof}

\subsection{
Error bounds for the step-like approximation 
and discretization
}

Consider 
minimax optimization problem (\ref{def:DR-RSRM-Opt})
with the Kantorovich ambiguity set $\mathfrak{Q}^V_K(Q_V,r)$, which contains only 
discrete distributions: 
\begin{equation}
\label{eq:SLDR-ARSRM-K}
    \vartheta_{M}^{V}:=\min_{z\in Z}\sup_{Q\in\mathfrak{Q}^V_K(Q_V,r)}\mathbb{E}_Q[\rho_{\sigma_M(\cdot,s)}(f(z,\xi))],
\end{equation}
where 
\bgeqn
\mathfrak{Q}^V_K(Q_V,r):=\{Q\in\mathscr{P}(\mathcal{S}^V):\mathsf{dl}_K(Q,Q_V)\leq r\},
\edeqn
 $Q_V$ is 
defined as in (\ref{def:vt-projection}).
Let
$\mathfrak{Q}_K(Q_N,r)$
be the counterpart of 
$\mathfrak{Q}^V_K(Q_V,r)$
which contains both 
discrete and 
continuous distributions 
in the ball and
\begin{equation*}
    \vartheta_{M}^N:= \min_{z\in Z}\sup_{Q\in\mathfrak{Q}_K(Q_N,r)}\mathbb{E}_Q[\rho_{\sigma_M(\cdot,s)}(f(z,\xi))].
\end{equation*}
We want to quantify the difference between $\vt_M^{V}$ and
$\vartheta_{M}^N$.
Let 
$$
\mathfrak{F}:=\{h(\cdot):=\int_{0}^1F^\leftarrow_{f(z,\xi)}(t)\sigma_M(t,\cdot)dt, \ \forall z\in Z\},
$$
and  for any two probability measures $Q_1,Q_2\in\mathscr{P}(S)$, define pseudo-metric
\begin{equation*}
    \mathsf{dl}_\mathfrak{F}(Q_1,Q_2):=\sup_{z\in Z}\left|\mathbb{E}_{Q_1}\left[\int_{0}^1F^\leftarrow_{f(z,\xi)}(t)\sigma_M(t,s_1)dt\right]-\mathbb{E}_{Q_2}\left[\int_{0}^1F^\leftarrow_{f(z,\xi)}(t)\sigma_M(t,s_2)dt\right]\right|,
\end{equation*}
where $s_1$ and $s_2$ follow distributions $Q_1$ and $Q_2$ respectively.
For any two sets of probability measures $\mathfrak{Q}_1$ and $\mathfrak{Q}_2$, let
$ \mathbb{H}_\mathfrak{F}(\mathfrak{Q}_1,\mathfrak{Q}_2)$
denote the Hausdorff distance between the two sets under the pseudo-metric
$ \mathsf{dl}_\mathfrak{F}(Q_1,Q_2)$.


    Next, we quantify 
    the difference between $\vt^*$ and $\vt_M^N$.
    We need the following assumption.

\begin{assumption}\label{asm:steplike-RS}
    For
    each $\sigma(\cdot,s)\in\mathfrak{S}(S)$, its
     step-like approximation,
     $\sigma_M(\cdot,s)$,
     is Lipschitz continuous in $s$ over $S$,
     i.e.,
     for any fixed $t\in[0,1]$,
     there exists a 
     positive constant $L$ 
     such that $\|\sigma_{M}(t,s_1)-\sigma_M(t,s_2)\|\leq L\|s_1-s_2\|$ for all $s_1,s_2\in S$.   
\end{assumption}

To see how the assumption 
may be possibly satisfied, 
we take a look at the 
randomized risk spectra in  Example \ref{exm:rsrm}.
Consider $\sigma(t,\alpha)$ defined in (\ref{eq:step-RS-CVaR}). The function is step-like 
with breakpoint $\alpha$. In this case, any step-like approximation of the function is discontinuous in $\alpha$. Thus the Lipschitz condition fails to hold in this case.
Next, consider (\ref{eq:wang's}). The 
step-like approximation of $\sigma(\cdot,s)$ onto the space $\mathfrak{S}_M((0,1])$ can be written as:
     \begin{equation*}
         \sigma_M(t,s)=\left\{\begin{array}{lcl}
         \sigma(t_i,s), & & \inmat{for}\; t\in[t_i,t_{i+1}), i\in[M_0]\backslash\{M\},\\
          M+1-\sum_{i=0}^{M-1}\sigma(t_i,s),& & 
          \inmat{for}\;
          t\in[t_M,1),
         \end{array}\right.
     \end{equation*}
     where $\{t_1,\cdots,t_M\}=\{1/(M+1),\cdots,M/(M+1)\}$. It is easy to drive that
     for any $s_1,s_2\in (0,1]$ and $M\geq2$,
      \begin{equation*}
      |\sigma_M(t,s_1)-\sigma_M(t,s_2)| \leq\left\{\begin{array}{lcl}
         \left(M+1)(\ln(M+1)-1\right)|s_1-s_2|
         , & & \inmat{for}\; t\in[0,t_M), 
         \\
          M(M+1)(\ln(M+1)-1)|s_1-s_2|,
          & & 
          \inmat{for}\;
          t\in[t_M,1).
         \end{array}\right.
     \end{equation*}
     Finally, consider (\ref{eq:Gini}). 
    Since 
    $|\sigma(t,s_1)-\sigma(t,s_2)|\leq|s_1-s_2|$ for all $t\in[0,1]$, it is 
    easy to verify that its  step-like approximation 
    onto the space $\mathfrak{S}_M([0,1])$ is also Lipschtz continuous w.r.t.~$s$ with modulus $1$.


 We are now ready to state the main result of this section.

{\color{black}
\begin{theorem}
 Assume the settings and conditions of Proposition \ref{prop:steplikeapp-A}. 
 Assume, in addition, that (a) Assumption \ref{asm:steplike-RS} holds, 
 (b) there exist positive constants $C$, $v$, and $\delta_0$ such that $Q^*(|s-s_0|\leq\delta)>C\delta^{v}$ for any fixed point $s_0\in S$ and $\delta\in(0,\delta_0)$, where $Q^*$ is a true continuous distribution of $s$. 
 Then
 for any $\theta\in(0,1)$  and $r=r_N(\theta)$, there exist positive constants $V_0$ and $N_0$ such that  
\bgeqn
\vartheta^* \in [\vartheta_{M}^V -3L \Psi r_N(\theta), \vartheta_{M}^V+3L \Psi r_N(\theta)]
\edeqn
with probability at least $1-\theta$ for all $V\geq V_0$, $N\geq N_0$, 
where $\Psi =\int_{0}^1\psi_f(t)dt$, $r_N(\theta)$ is defined as in 
(\ref{eq:rN}) and $r_N(\theta)\leq 1$,
$$
V_0 = -\frac{\ln\left(\theta/(2\varrho_1(3L \Psi r_N(\theta)))\right)}{\varrho_2(3L \Psi r_N(\theta))}, \  N_0 =\max\left\{\frac{\log(C_1\theta^{-1})}{C_2  r_N^2(\theta)}, -\frac{\ln(\theta/2)}{\Upsilon(3L \Psi r_N(\theta)}\right\}
$$
and 
$\Delta_M\leq\frac{3L r_N(\theta)}{2\mathbb{E}[L(s)]
}$.


\end{theorem}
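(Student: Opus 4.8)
The plan is to bound $|\vartheta^*-\vartheta_M^V|$ by a triangle-inequality decomposition through two intermediate quantities: $\vartheta_M^*$, the step-like surrogate evaluated at the true distribution $Q^*$ (defined in (\ref{eq:Steplike-ESRM-MIN})), and $\vartheta_M^N$, the step-like worst case over the \emph{continuous} Kantorovich ball $\mathfrak{Q}_K(Q_N,r)$ defined just before the statement. Thus
\[
|\vartheta^*-\vartheta_M^V| \le |\vartheta^*-\vartheta_M^*| + |\vartheta_M^*-\vartheta_M^N| + |\vartheta_M^N-\vartheta_M^V|,
\]
and I would control the three terms by, respectively, the step-like approximation error, the ball-radius error, and the Voronoi discretization of the ambiguity set.

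The workhorse is a uniform Lipschitz estimate. For fixed $z$, the map $s\mapsto \rho_{\sigma_M(\cdot,s)}(f(z,\xi))=\int_0^1 F^\leftarrow_{f(z,\xi)}(t)\sigma_M(t,s)\,dt$ is Lipschitz in $s$ with modulus $L\Psi$, where $\Psi=\int_0^1\psi_f(t)\,dt$: indeed $|\rho_{\sigma_M(\cdot,s_1)}-\rho_{\sigma_M(\cdot,s_2)}|\le \int_0^1 |F^\leftarrow_{f(z,\xi)}(t)|\,|\sigma_M(t,s_1)-\sigma_M(t,s_2)|\,dt \le L\Psi\|s_1-s_2\|$ by Assumption \ref{asm:steplike-RS} together with the envelope bound $\sup_{g\in\mathscr{G}_f}|g|\le\psi_f$ of Assumption \ref{asm:randomlossf}. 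Hence every member of $\mathfrak{F}$ is $L\Psi$-Lipschitz, and the Kantorovich dual representation gives $|\mathbb{E}_{Q_1}[h]-\mathbb{E}_{Q_2}[h]|\le L\Psi\,\mathsf{dl}_K(Q_1,Q_2)$ uniformly over $z\in Z$. This lemma is what converts each transport distance into an objective-value gap, and it also shows $\mathsf{dl}_\mathfrak{F}\le L\Psi\,\mathsf{dl}_K$.

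Carrying out the three bounds: for the step-like term I would apply Proposition \ref{prop:steplikeapp-A}(i) to get $|\vartheta^*-\vartheta_M^*|\le \mathbb{E}_{Q^*}[L(s)]\Delta_M\Psi$, which the hypothesis $\Delta_M\le 3Lr_N(\theta)/(2\mathbb{E}[L(s)])$ collapses into a multiple of $L\Psi r_N(\theta)$. For the ball-radius term I invoke the Fournier--Guillin concentration inequality (\ref{pro:wass}) with $r=r_N(\theta)$, so that $\mathsf{dl}_K(Q^*,Q_N)\le r$ once $N\ge \log(C_1\theta^{-1})/(C_2 r_N^2(\theta))$; on this event $Q^*\in\mathfrak{Q}_K(Q_N,r)$ yields $\vartheta_M^*\le\vartheta_M^N$, while any $Q$ in the ball satisfies $\mathsf{dl}_K(Q,Q^*)\le 2r$ by the triangle inequality, so the Lipschitz lemma gives $\vartheta_M^N\le\vartheta_M^*+2L\Psi r$. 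For the Voronoi term I would bound the Hausdorff distance $\mathbb{H}_\mathfrak{F}(\mathfrak{Q}_K(Q_N,r),\mathfrak{Q}_K^V(Q_V,r))$ and translate it into $|\vartheta_M^N-\vartheta_M^V|$ via the same duality; here condition (b), the lower density bound $Q^*(|s-s_0|\le\delta)>C\delta^v$, guarantees that each Voronoi cell receives a proportionate share of samples, so that the empirical Voronoi weights in $Q_V$ approximate the true Voronoi projection, and a Cram\'er-type concentration on the per-cell counts $N_i$ (as in the proof of Proposition \ref{pro:7-2}, producing the rate $\Upsilon$) forces this error below the $r_N(\theta)$-scale once $V\ge V_0$. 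Finally I would assemble the pieces with a union bound over the favorable events, choosing the probability budgets so the total failure probability is at most $\theta$ for $N\ge N_0$ and $V\ge V_0$, and add the three constants to obtain the radius $3L\Psi r_N(\theta)$; the two-sided conclusion $\vartheta^*\in[\vartheta_M^V-3L\Psi r_N(\theta),\vartheta_M^V+3L\Psi r_N(\theta)]$ then follows.

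The hard part is the Voronoi step. Unlike the ball-radius step, $Q^*$ is continuous and therefore never lies in $\mathfrak{Q}_K^V(Q_V,r)$, which contains only distributions supported on $\mathcal{S}^V$; one must argue through the Voronoi projection and show that projecting commutes, up to $O(r_N(\theta))$, with taking the worst case over the ball. Making the threshold $V_0$ and the constants $\varrho_1,\varrho_2$ explicit is precisely where the density condition (b) and a delicate concentration estimate on the cell counts $N_i$ are needed, and this is the step I expect to be the main obstacle. Once the Lipschitz duality lemma and the containment $Q^*\in\mathfrak{Q}_K(Q_N,r)$ are in hand, the remaining manipulations are routine.
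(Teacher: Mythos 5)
Your skeleton---the $L\Psi$-Lipschitz estimate in $s$ plus Kantorovich duality, a triangle-inequality decomposition, and a union bound---is the same machinery the paper uses, and your middle terms are a legitimate variant of its route: where you pass through $\vartheta_M^*$ and the event $\mathsf{dl}_K(Q^*,Q_N)\le r_N(\theta)$ (containment of $Q^*$ in the ball plus the diameter bound $2L\Psi r$), the paper instead passes through $\mathbb{E}_{Q_N}[\rho_{\sigma_M(\cdot,s)}(f(z,\xi))]$ and invokes Proposition \ref{pro:7-2} (Cram\'er's large deviation theorem applied to $\mathbb{E}_{Q_N}[\boldsymbol{\sigma(s)}]$), so that the empirical-distribution error and the step-like error share one budget. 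However, there are two genuine gaps. The first is the step you defer, and your sketch of it points at the wrong tool. The paper needs no concentration of the per-cell counts $N_i$ and no statement that ``the empirical weights approximate the true Voronoi projection''. It bounds $|\vartheta_M^N-\vartheta_M^V|$ deterministically by the Hausdorff distance between the two balls, $\mathbb{H}_\mathfrak{F}\bigl(\mathfrak{Q}_K(Q_N,r),\mathfrak{Q}_K^V(Q_V,r)\bigr)\le L\Psi\,\mathbb{H}_K\bigl(\mathfrak{Q}_K(Q_N,r),\mathfrak{Q}_K^V(Q_V,r)\bigr)\le 3L\Psi\beta_V$, citing \cite[Theorem 3]{PichlerXu18} and \cite[Theorem 2]{ChenSunXu21}, where $\beta_V=\max_{s\in S}\min_{j\in[V]}\|s-\hat{s}^j\|$ is the fill distance of the center set $\mathcal{S}^V$; this quantity is purely geometric and independent of the counts $N_i$. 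Condition (b) enters only afterwards, to give exponential concentration of $\beta_V$ (via \cite{AXZ14}, \cite[Lemma 3.1]{XLS18} and \cite[Proposition 9]{lpx19}), i.e., randomly placed centers fill $S$ quickly---not to equidistribute samples among cells. Without this Hausdorff stability result, the Voronoi step as you sketched it (projection ``commuting'' with the worst case, cell-count concentration) has no route to completion.

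The second gap is the constant accounting, which you wave through with ``add the three constants''. Under the stated hypothesis $\Delta_M\le 3Lr_N(\theta)/\bigl(2\mathbb{E}[L(s)]\bigr)$, Proposition \ref{prop:steplikeapp-A} only guarantees your first term is $\le\tfrac{3}{2}L\Psi r_N(\theta)$, and your second term costs $2L\Psi r_N(\theta)$ because the ball has diameter $2r$; these two alone sum to $\tfrac{7}{2}L\Psi r_N(\theta)>3L\Psi r_N(\theta)$ before the Voronoi term is added. So your decomposition proves the statement only with a strictly larger radius (roughly $\tfrac{9}{2}L\Psi r_N(\theta)$), or else requires tightening $\Delta_M$ and $\beta_V$ beyond what the theorem assumes. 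The paper reaches the constant $3$ by splitting $\varepsilon=3L\Psi r_N(\theta)$ into three equal budgets of $L\Psi r_N(\theta)$: one for $3L\Psi\beta_V$, one for the ball-radius term $L\Psi\,\mathsf{dl}_K(Q,Q_N)\le L\Psi r_N(\theta)$, and one for the Proposition \ref{pro:7-2} event; your detour through the ball diameter structurally doubles the middle budget, which is exactly where the claimed constant is lost.
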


\begin{proof}
Under Assumption \ref{asm:steplike-RS}, we can derive
the Lipschitz continuity of
$\rho_{\sigma_M(\cdot,s)}(f(z,\xi))$
in $s$, that is,
\begin{equation*}
\begin{split}
\left|\rho_{\sigma_M(\cdot,s_1)}(f(z,\xi))-\rho_{\sigma_M(\cdot,s_2)}(f(z,\xi))\right|
=&\left|\sum_{i=0}^M(\sigma_i(s_1)-\sigma_i(s_2))\int_{t_i}^{t_{i+1}}F^\leftarrow_{f(z,\xi)}(t)dt\right|\\
\leq&\max_{i\in[M_0]}|\sigma_{i}(s_1)-\sigma_i(s_2)|\int_{0}^1\psi_f(t)dt
\leq L\Psi|s_1-s_2| 
\end{split}
\end{equation*}
where 
$\Psi=\int_{0}^1\psi_f(t)dt$.
By the definition of the Kantorovich metric, the Lipschitz continuity ensures
that  for any $Q\in\mathscr{P}(S)$
\bgeqn
    \left|\mathbb{E}_Q[\rho_{\sigma_M(\cdot,s)}(f(z,\xi))]-\mathbb{E}_{Q_N}[\rho_{\sigma_M(\cdot,s)}\left(f(z,\xi)\right)]\right|\leq L\Psi\mathsf{dl}_K(Q,Q_N).
\label{eq:Q-QN-thm7.1}
\edeqn
By virtue of \cite[Theorem 3]{PichlerXu18}
and \cite[Theorem 2]{ChenSunXu21},
\begin{equation*}
\begin{split}
    \left|\vartheta_M^{V}-\vartheta_{M}^N\right|&\leq \mathbb{H}_\mathfrak{F}\left(\mathfrak{Q}_K\left(Q_N,r\right),\mathfrak{Q}_K^V\left(Q_V,r\right)\right)\\
    &\leq L\Psi \mathbb{H}_K\left(\mathfrak{Q}_K\left(Q_N,r\right),\mathfrak{Q}_K^V\left(Q_V,r\right)\right)
    \leq 3L\Psi\beta_V,
\end{split}
\end{equation*}
where 
$
\beta_V = \max_{s\in S}\min_{1\leq j \leq V} d(s,\hat{s}^j),
\label{def:beta_V}
$
and $d(s,\hat{s}^j)=\|s-\hat{s}^j\|$ denotes some norm distance in the Euclidean space. Note that $\beta_V$ depends on the selection of $\{\hat{s}^1,\cdots,\hat{s}^V\}$, 
thus $\beta_V$
is a random variable.
By the triangle inequality
\begin{equation}\label{eq:prob-pro1}
    \begin{split}
        \Prob(|\vartheta_{M}^V-\vartheta^*|\geq\varepsilon)
        \leq&\Prob\left(\left(\left|\vartheta_M^V-\vartheta_{M}^N\right|+\left|\vartheta_{M}^N-\vartheta^*\right|\right)\geq\varepsilon\right)\\
         \leq&\Prob\left(3L\Psi\beta_V\geq\frac{\varepsilon}{3}\right)+\Prob\left(\left|\vartheta_{M}^N-\vartheta^*\right|\geq\frac{2\varepsilon}{3}\right)\\
        \leq&\Prob\left(\beta_V\geq\frac{\varepsilon}{9L\Psi}\right)+\Prob\left(\left|\vartheta_{M}^N-\vartheta^*\right|\geq\frac{2\varepsilon}{3}\right).
\end{split}
\end{equation}
for any small real number $\varepsilon>0$. 
In the following, we show that there exist constants $V_0$, $N_0$ and $M_0$ such that $\Prob\left(\beta_V\geq\frac{\varepsilon}{9L\Psi}\right)\leq\frac{\theta}{2}$ and $\Prob\left(\left|\vartheta_{M}^N-\vartheta^*\right|\geq\frac{2\varepsilon}{3}\right)\leq\frac{\theta}{2}$ for all $V\geq V_0$, $N\geq N_0$ and $M\geq M_0$.
We proceed the rest of the proof 
in two steps.

{\bf Step 1.} Since the support set $S$ of $s$ is bounded and the condition (b) holds, then
it follows by \cite{AXZ14}, \cite[Lemma 3.1]{XLS18} and \cite[Proposition 9]{lpx19} that
there exist positive constants $\varrho_1(\varepsilon)$ and $\varrho_2(\varepsilon)$ depending on $\varepsilon$ such that
$$
\Prob\left(\beta_V\geq\frac{\varepsilon}{9L\Psi}\right)\leq \varrho_1(\varepsilon) e^{-\varrho_2(\varepsilon)N}. 
$$
Let
$V_0 :=-\frac{\ln\left(\theta/(2\varrho_1(\varepsilon))\right)}{\varrho_2(\varepsilon)}$ be 
such that $\Prob\left(\beta_V\geq\frac{\varepsilon}{9L\Psi}\right)\leq\frac{\theta}{2}$ for all $V\geq V_0$.

{\bf Step 2.} For each fixed $z\in Z$ and $Q\in\mathfrak{Q}_K(Q_N,r)$,
\begin{equation*}
    \begin{split}
        \left|\mathbb{E}_Q[\rho_{\sigma_M(\cdot,s)}(f(z,\xi))]-\mathbb{E}_{Q^*}[\rho_{\sigma(\cdot,s)}(f(z,\xi))]\right|
        \leq&\left|\mathbb{E}_Q[\rho_{\sigma_M(\cdot,s)}(f(z,\xi))]-\mathbb{E}_{Q_N}[\rho_{\sigma_M(\cdot,s)}(f(z,\xi))]\right|\\
        &+\left|\mathbb{E}_{Q_N}[\rho_{\sigma_M(\cdot,s)}(f(z,\xi))]-\mathbb{E}_{Q^*}[\rho_{\sigma(\cdot,s)}(f(z,\xi))]\right|.
    \end{split}
\end{equation*}
Let $E$ denote the event that $\mathsf{dl}_K(Q^*,Q_N)\leq r_N(\theta)$ for $r_N(\theta)\leq 1$ and $F$ denote the event that 
\bgeqn 
\left|\mathbb{E}_{Q_N}[\rho_{\sigma_M(\cdot,s)}(f(z,\xi))]-\mathbb{E}_{Q^*}[\rho_{\sigma(\cdot,s)}(f(z,\xi))]\right|\leq\frac{\varepsilon}{3}.
\label{eq:Q_N-Q*-step2}
\edeqn
By (\ref{pro:wass}), (\ref{eq:rN}) and Proposition \ref{pro:7-2}, there exists a positive integer 
 $
 N_1
 :=\max\left\{\frac{\log(2C_1\theta^{-1})}{C_2},-\frac{\ln(\theta/2)}{\Upsilon(\varepsilon)}\right\}
 $
 and $M_0$
 such that $\Prob(E\cap F)\geq 1-\frac{\theta}{2}$ and $\Delta_M\leq\frac{\varepsilon}{2\mathbb{E}[L(s)]\Psi}$ for all $N\geq N_2$ and $M\geq M_0$.
  Consequently
    \bgeqn
        &&\Prob\left(\left|\vartheta_{M}^N-\vartheta^*\right|\geq\frac{2\varepsilon}{3}\right)\nonumber\\
        &\leq&\Prob\left(\sup_{z\in Z,Q\in\mathfrak{Q}_K(Q_N,r)}\left|\mathbb{E}_Q[\rho_{\sigma_M(\cdot,s)}(f(z,\xi))]-\mathbb{E}_{Q^*}[\rho_{\sigma(\cdot,s)}(f(z,\xi))]\right|\geq\frac{2\varepsilon}{3}\right)\nonumber\\
        &=&\Prob\left(\sup_{z\in Z,Q\in\mathfrak{Q}_K(Q_N,r)}\left|\mathbb{E}_Q[\rho_{\sigma_M(\cdot,s)}(f(z,\xi))]-\mathbb{E}_{Q^*}[\rho_{\sigma(\cdot,s)}(f(z,\xi))]\right|\geq\frac{2\varepsilon}{3}\bigcap(E\cap F)\right)\nonumber\\
        &&+\Prob\left(\sup_{z\in Z,Q\in\mathfrak{Q}_K(Q_N,r)}\left|\mathbb{E}_Q[\rho_{\sigma_M(\cdot,s)}(f(z,\xi))]-\mathbb{E}_{Q^*}[\rho_{\sigma(\cdot,s)}(f(z,\xi))]\right|\geq\frac{2\varepsilon}{3}\bigcap(\overline{E\cap F})\right)\nonumber\\
        &\leq&\Prob\left(\sup_{z\in Z,Q\in\mathfrak{Q}_K(Q_N,r)}\left(\left|\mathbb{E}_Q[\rho_{\sigma_M(\cdot,s)}(f(z,\xi))]-\mathbb{E}_{Q_N}[\rho_{\sigma_M(\cdot,s)}(f(z,\xi))]\right|\right.\right.\nonumber\\
        &&\left.\left.+\left|\mathbb{E}_{Q_N}[\rho_{\sigma_M(\cdot,s)}(f(z,\xi))]-\mathbb{E}_{Q^*}[\rho_{\sigma(\cdot,s)}(f(z,\xi))]\right|\right)\geq\frac{2\varepsilon}{3}|E\cap F\right)\Prob(E\cap F)+\Prob(\overline{E\cap F})\nonumber\\
        &\leq&\Prob\left(\sup_{z\in Z,Q\in\mathfrak{Q}_K(Q_N,r_N(\theta))}
        \left|\mathbb{E}_Q [\rho_{\sigma_M(\cdot,s)}(f(z,\xi))]-\mathbb{E}_{Q_N}[\rho_{\sigma_M(\cdot,s)}(f(z,\xi))]\right|\geq\frac{\varepsilon}{3}\right)\Prob(E\cap F)\nonumber\\
        &&\quad\quad+
        \frac{\theta}{2}\nonumber\\
        &\leq&\Prob\left(\sup_{Q\in\mathfrak{Q}_K(Q_N,r_N(\theta))}L\Psi\mathsf{dl}_K(Q,Q_N)\geq\frac{\varepsilon}{3}\right)+\frac{\theta}{2},
\label{eq:prob-pro}
\edeqn
where the second last inequality results from 
(\ref{eq:Q_N-Q*-step2}) and the last inequality 
is due to (\ref{eq:Q-QN-thm7.1}).
Let $N_2 :=\frac{9L^2\Psi^2\log(C_1\theta^{-1})}{C_2\varepsilon^2}$. Then
$r_N(\theta)<\frac{\varepsilon}{3L\Psi}$, 
i.e., $\varepsilon> 3 L \Psi r_N(\theta)$ for all $N\geq N_2$.
Summarizing the discussions above, 
we conclude that 
there exist $V_0:=\frac{\ln\left(\theta/(2\varrho_1(\varepsilon))\right)}{\varrho_2(\varepsilon)}$, $N_0 :=\max\left\{N_1,N_2\right\}$ and $M_0$ such that
\begin{equation*}
    \begin{split}
        \Prob(|\vartheta_{M}^V-\vartheta^*|\geq\varepsilon)\leq
        &\Prob\left(\beta_V\geq\frac{\varepsilon}{9L\Psi}\right)+\Prob\left(\sup_{Q\in\mathfrak{Q}_K(Q_N,r_N(\theta))}\mathsf{dl}_K(Q,Q_N)> r_N(\theta)\right)+\frac{\theta}{2}\\
        \leq&\frac{\theta}{2}+\frac{\theta}{2}=\theta
    \end{split}
\end{equation*}
for all $V\geq V_0$, $N\geq N_0$ and $M\geq M_0$. The inequality implies that
$$
\vartheta^*\in[\vartheta_{M}^V-3 L\Psi r_N(\theta),\vartheta_{M}^V+3 L\Psi r_N(\theta)]
$$
with probability at least $1-\theta$ for all $V\geq V_0$, $N\geq N_0$ and $M\geq M_0$.
\hfill $\Box$
\end{proof}
}

\section{Concluding remarks}

In this paper, we explore randomization of 
spectral risk measure for the case that
a single SRM does not exist for the description of 
a DM's risk preferences. 
Differing from
Bertsimas and O'Hair's method  \cite{BeO13} for 
tackling DM's preference inconsistency 
where the authors regard  inconsistencies 
as ``mistakes'' and consequently propose 
a remedy by relaxing the model 
to accommodate the mistakes so long as the total quantity of mistakes is controlled, 
we allow unlimited 
number of inconsistencies/mistakes. 
As such, the proposed model may be more easily
utilized for descriptive analysis where the empirical data used to describe a DM's past behaviour.
It can perhaps also be more realistically 
used for prescriptive analysis by a modeller who does not have complete information on the DM's  
risk preference and consequently uses the available data to forecast the DM's future decisions. 
Moreover, 
the randomization of risk measures enables us to 
interpret the Kusuoka's representation and spectral risk representation (\ref{eq:RSRM}) of a law invariant risk measure from risk preference perspective and  
provide an avenue to construct an approximation of the ambiguity sets in these representations via preference elicitation.
As we can see, the randomization 
depends on the ``building blocks'', e.g., 
VaRs, or CVaRs or SRMs. 
We envisage that this kind of randomization approaches can be extended to other risk measures. Moreover, 
it will be interesting to investigate how to ``learn''
efficiently the DM's preferences
in terms of the type of $\sigma(\cdot,s)$ and the distribution of $s$ in practical applications.
We leave all these for future research.


\appendix
\section{Supplementary materials}
\label{Append}

\subsection{Proof of Proposition \ref{pro:algthm-convergence}}
\label{pro:algthm-convergence-proof}
\begin{proof}
The proof is analogous to 
that of \cite[Proposition 3.1 ]{wx20}. Define
\bgeqn\label{alm:min-maxproblem-1}
 v(z,\boldsymbol{q}) := \sum_{i=1}^V\sum_{k=1}^Kq_i\beta_{ik}\inmat{CVaR}_{\alpha_k}(f(z,\xi)),
\edeqn
 where $\beta_{ik}=(\psi_{i,k}-\psi_{i,k-1})(K-k+1)$, $\alpha_k=\frac{k-1}{K}$ ,$\psi_{i,0}=0$ and $\psi_{i,k}=\int_{\frac{k-1}{K}}^{\frac{k}{K}}\sigma_M(t,\hat{s}^i)dt=\sum_{j=0}^M\sigma_j(\hat{s}^i)\int_{\frac{k-1}{K}}^{\frac{k}{K}}\mathbbm{1}_{[t_i,t_{i+1})}(t)dt$ for all $i\in[V],k\in[K]$. It is 
 easy to observe
 that $v(z,\boldsymbol{q})$ is  convex 
 in $z$ for 
 fixed $\boldsymbol{q}$, and $v(z,\boldsymbol{q})$ is 
 linear 
 $\boldsymbol{q}$ for every fixed $z$. Thus, we can rewrite problem (\ref{def:DR-RSRM-Opt-approx}) 
 as
 \bgeqn
 \label{alm:min-maxproblem}
 \min_{z\in Z} \max_{\boldsymbol{q}}  v(z,\boldsymbol{q}).
 \edeqn
Let
  $(z^*,\boldsymbol{q}^*)$ denote a cluster point of $\{(z^{\ell+1},\boldsymbol{q}^{\ell+1})\}$ generated by Algorithm \ref{alm:DRSRM}. 
We want to show that
\begin{equation}
\label{eq:saddle-point}
    v(z^*,\boldsymbol{q})\leq v(z^*,\boldsymbol{q}^*)\leq v(z,\boldsymbol{q}^*), \ \forall \ \boldsymbol{q}\in\mathfrak{Q}^V, z\in Z,
\end{equation}
which means that $(z^*,\boldsymbol{q}^*)$ is a saddle point of $v(z,\boldsymbol{q})$ and hence an optimal solution of (\ref{def:DR-RSRM-Opt-approx}). 
For $\ell=0,1,2,\cdots$, it follows by the algorithm
\begin{equation}
    v(z^{\ell},\boldsymbol{q})\leq v(z^{\ell},\boldsymbol{q}^{\ell+1}), \ \forall \boldsymbol{q}\in\mathfrak{Q}^V
\end{equation}
and
\begin{equation}\label{ieq:contra}
    v(z^{\ell+1},\boldsymbol{q}^{\ell+1})\leq v(z,\boldsymbol{q}^{\ell+1}), \ \forall z\in Z.
\end{equation}
In the case when the algorithm terminates in finite steps,  $z^{\ell+1}=z^\ell$ and $\boldsymbol{q}^{\ell+1}=\boldsymbol{q}^\ell$ for some $\ell$ and thus $(z^{\ell+1},\boldsymbol{q}^{\ell+1})$ satisfies (\ref{eq:saddle-point}).

Next, we consider the case where the algorithm generates an infinite sequence $\{(z^{\ell+1},\boldsymbol{q}^{\ell+1})\}$. Let $(\tilde{z},\tilde{\boldsymbol{q}})$ be a cluster point of $\{(z^{\ell+1},\boldsymbol{q}^{\ell+1})\}$,i.e., $(z^{\ell+1},\boldsymbol{q}^{\ell+1})\rightarrow(\tilde{z},\tilde{\boldsymbol{q}})$ as $l\rightarrow\infty$. Assume for a contradiction that $(\tilde{z},\tilde{\boldsymbol{q}})$ is not a solution to 
problem (\ref{alm:min-maxproblem}). Then $(\tilde{z},\tilde{\boldsymbol{q}})$ 
does not satisfy one of the inequalities in (\ref{eq:saddle-point}). Let's consider the case that the second inequality 
fails to hold. Then there exists $z_0\in Z$ such that
\bgeqn
v(\tilde{z},\tilde{\boldsymbol{q}})>v(z_0,\tilde{\boldsymbol{q}}),
\edeqn
and subsequently, we have 
\bgeqn
v(z^{\ell+1},\boldsymbol{q}^{\ell+1})> v(z_0,\tilde{q})
\edeqn
for $\ell$ sufficiently large, which is 
contradiction 
(\ref{ieq:contra}) as desired. 
Likewise,
we can 
show that $(\tilde{z},\tilde{\boldsymbol{q}})$ satisfies the first inequality in (\ref{eq:saddle-point}).  \hfill $\Box$
\end{proof}

\end{document}